\definecolor{r}{rgb}{0.9,0.3,0.1}
\definecolor{b}{rgb}{0.1,0.3,0.9}
\newtheorem{theorem}{Theorem}[section]
\newtheorem{lemma}[theorem]{Lemma}
\newtheorem{corollary}[theorem]{Corollary}
\theoremstyle{remark}
\newtheorem{remark}[theorem]{Remark}
\theoremstyle{definition}
\newtheorem{assumption}[theorem]{Assumption}
\newtheorem{definition}[theorem]{Definition}
\newcommand\cbrk{\text{$]$\kern-.15em$]$}}
\newcommand\opar{\text{\,\raise.2ex\hbox{${\scriptstyle
|}$}\kern-.34em$($}}
\newcommand\cpar{\text{$)$\kern-.34em\raise.2ex\hbox{${\scriptstyle |}$}}\,}
\newcommand{\al}{\alpha}
\newcommand{\ga}{\gamma}
\newcommand\bL{\mathbb{L}}
\newcommand\bR{\mathbb{R}}
\newcommand\bH{\mathbb{H}}
\newcommand\bZ{\mathbb{Z}}
\newcommand\bM{\mathbb{M}}
\newcommand\cB{\mathcal{B}}
\newcommand\cF{\mathcal{F}}
\newcommand\cL{\mathcal{L}}
\newcommand\cM{\mathcal{M}}
\newcommand\frH{\mathfrak{H}}
\newcommand\aint{-\hspace{-0.38cm}\int}
\newcommand{\mysection}[1]{\section{#1}
\setcounter{equation}{0}}
\begin{document}

\setlength{\baselineskip}{16pt}

\title
{A weighted $L_p$-theory for  second-order elliptic and parabolic partial differential systems on a half space}

\author{Kyeong-Hun Kim$\,^1$, Kijung Lee$\,^2$
\\[0.5cm]
\small{$\;^{1}\,$Department of Mathematics, Korea University}\\
\small{Seoul, 136-701, Korea}\\
\small{$\;^{2}\,$Department of Mathematics, Ajou University}\\
\small{Suwon, 443-749, Korea}\\}
\date{}


\maketitle

{\hspace{3.6cm}{{\it - Dedicated to 70th birthday of N.V. Krylov }}}
\vspace{0.4cm}

\begin{abstract}
In this paper we develop a Fefferman-Stein theorem, a Hardy-Littlewood theorem and   sharp function estimations in weighted Sobolev spaces.  We also  provide
 uniqueness and existence results for  second-order elliptic and parabolic partial differential systems in weighed Sobolev spaces.

\vspace*{.125in}

\noindent {\it Keywords: Fefferman-Stein theorem, Hardy-Littlewood theorem,  Weighted Sobolev spaces, Sharp function estimations,  $L_p$-theory, Elliptic  partial differential systems,
 Parabolic partial differential systems.}

\vspace*{.125in}

\noindent {\it AMS 2000 subject classifications:} 42B37, 35K45, 35J57.
\end{abstract}



\mysection{Introduction}

In this article  we consider  the elliptic system
\begin{eqnarray}
                              \label{main system elliptic}
\sum_{i,j=1}^{d}\sum_{r=1}^{d_1}a^{ij}_{kr}u^r_{x^ix^j}(x)=f^k(x), \quad (k=1,2,\cdots,d_1)
 \end{eqnarray}
and the parabolic system
\begin{eqnarray}
 u^k_t(t,x)=\sum_{i,j=1}^{d}\sum_{r=1}^{d_1}a^{ij}_{kr}(t)u^r_{x^ix^j}(t,x)+f^k(t,x), \quad  (k=1,2,\cdots,d_1) \label{main system}
 \end{eqnarray}
defined for $t>0$ and $x\in \bR^d_+$.

In the study of  partial differential equations (PDEs) or  of partial differential systems (PDSs)  regularity theory play the key role of describing
 essential relations between input data and the unknown solutions; the sharper the theory is, the more understanding of the relations we get.

The primary goals of this article are to introduce some new mathematical
tools and ideas which are useful  in the study of   systems in  $L_p$-spaces involving weights and to
 provide another
nice regularity theory for these systems.

In this article we use weighted Sobolev spaces  for the unknown function $u=(u^1,\cdots,u^{d_1})$ and the inputs $f^k$.
The need to introduce weights comes from, for instance, the theory of stochastic partial
differential equations (SPDEs) or stochastic partial differential systems (SPDSs), where a H\"older space approach does not allow us to obtain results of reasonable generality and Sobolev spaces without weights are trivially inappropriate (see \cite{KL2} for details).  To study  such stochastic systems one has to develop a nice regularity theory for the corresponding deterministic systems in advance.
Also Sobolev spaces with weights are very useful  in treating degenerate elliptic and parabolic equations (see, for instance, \cite{L}) and
in studying equations defined on non-smooth domains such as domains with wedges (see, for instance, \cite{G,L, S}).

 In principle there are three main methods for $L_p$-theory: multiplier theory, Calder\'on-Zygmund theory
 and the pointwise estimate using  sharp functions.
Multiplier theory fits well  when the principal operator is almost
Laplacian and the equation under consideration is defined on the entire space, and Calder\'on-Zygmund theory works well when there exists an integral representation of solutions and the integral is taken over
$\bR^n$ for some $n$. However, these two methods do not
fit  our case since we are dealing with weighted $L_p$-theories for systems (\ref{main system elliptic}) and (\ref{main system elliptic}) defined on a half space.  Thus  we use an approach based on
  pointwise estimates of the sharp function of second order derivatives, but unlike the standard
theory (for instance, \cite{kr08}) we need to use the weighted version.
The elaboration  of this approach   is one of our main results.

We also mention that if $d_1=1$ then weighted $L_p$-theories for single equations defined on a half space can be constructed based on integration by parts without relying on sharp function estimations (see the proof of Lemma 4.8 and Lemma 6.3 of \cite{kr99}). However it seems that the arguments in the proof of Lemma 4.8 and Lemma 6.3 of \cite{kr99} cannot be reproduced for $L_p$-theory of systems unless $p=2$ and some stronger algebraic conditions on $A^{ij}$
 are additionally assumed.

Interestingly, we discovered  some very useful tools in the perspective
of linear Partial differential equations/systems theory.
Even though, in this article, we only consider the systems with
coefficients independent of $x$, the sharp function estimates and
the tools used to derive them will naturally lead to many
subsequent works studying, for instance,  elliptic and
parabolic equations and systems with discontinuous coefficients defined in an arbitrary domain $U$ of $\bR^d$. In this context, we refer the readers to very extensive literature \cite{kr08} and recent articles \cite{B, By, C, Doy, H} (also see the references therein),
where (standard) $L_p$-theories are constructed for single equations with VMO (or small BMO)-coefficients.

The article is organized as follows. In section \ref{FS HL} we
prove the  Fefferman-Stein theorem and Hardy Littiewood theorem with
our special weights; the proofs are quite elementary. In section
\ref{main result} we introduce weighted Sobolve spaces and formulate our regularity results for the systems,   Theorem \ref{main theorem} and Theorem \ref{main theorem-elliptic}.  The \emph{useful}
tools and ideas for proving Theorem \ref{main theorem} and Theorem \ref{main theorem-elliptic} are in
section \ref{local estimate} and \ref{section sharp}; the local
estimations and the sharp function estimations.  Finally Theorem \ref{main theorem} and Theorem \ref{main theorem-elliptic} are proved in
section \ref{section proof}.

As usual $\bR^{d}$
stands for the Euclidean space of points $x=(x^{1},...,x^{d})$ and
$\bR^{d}_{+}=\{x\in\bR^{d}:x^{1}>0\}$.
For $i=1,...,d$, multi-indices $\alpha=(\alpha_{1},...,\alpha_{d})$,
$\alpha_{i}\in\{0,1,2,...\}$, and functions $u(x)$ we set
$$
u_{x^{i}}=\frac{\partial u}{\partial x^{i}}=D_{i}u,\quad
D^{\alpha}u=D_{1}^{\alpha_{1}}\cdot...\cdot D^{\alpha_{d}}_{d}u,
\quad|\alpha|=\alpha_{1}+...+\alpha_{d}.
$$
By $\delta^{kr}$ we
denote the Kronecker delta on the indices $k,r$. If we write  $N=N(\cdots)$, this means that the constant $N$ depends
only on what are in parenthesis.

The authors are sincerely grateful to Ildoo Kim for finding few errors in the earlier version of this article.

\mysection{F-S and H-L theorems  in  weighted $L_p$-spaces}
                            \label{FS HL}

Denote
$$
\Omega=\bR \times \bR^d_+:=\{(t,x)=(t,x^1,x^2,\ldots,x^d)\;:x^1>0\}.
$$
 Also, by $\cB(\bR^d_+)$ and $\mathcal{B}(\Omega)$ we denote the Borel
$\sigma$-algebra on $\bR^d_+$ and $\Omega$ respectively.  Fix $\alpha \in (-1,\infty)$ and define the weighted measures
$$
\nu(dx)=\nu_{\alpha}(dx)=(x^1)^{\alpha}dx, \quad d\mu=\mu_{\alpha}(dtdx):=\nu_{\alpha}(dx)dt.
$$
Then $(\bR^d_+, \cB(\bR^d_+), \nu)$ and $(\Omega,\cB(\Omega),\mu)$ are
measure spaces with $\nu(\bR^d_+)=\mu(\Omega)=\infty$. Let $p\in [1,\infty)$ and
$L_p(\Omega,\mu)=L_p(\Omega,\mu;\mathbb{R}^{d_1})$ ($L_p(\bR^d_+,\nu)$ resp.) be the collection
of Borel-measurable functions $u=(u^1,\ldots,u^{d_1})$ defined on $\Omega$ (on $\bR^d_+$ resp.) satisfying
\begin{eqnarray}
\|u\|^p_{L_p(\Omega,\mu)}:=\int_{\Omega}|u|^p d\mu <\infty, \quad \quad \left(\|u\|^p_{L_p(\bR^d_+,\nu)}:=\int_{\bR^d_+}|u|^p \nu(dx)<\infty, \text{respectively}\right).\nonumber
\end{eqnarray}
Denote
$$
\cB^0(\Omega):=\{C\in \cB(\Omega)\;:\; |C|:=\mu(C)
<\infty\}, \quad \cB^0(\bR^d_+):=\{D\in \cB(\bR^d_+)\;:\; |D|:=\nu(D)
<\infty\}.
$$
We say $f \in L_{1,loc}(\Omega,\mu;\mathbb{R}^{d_1})$ if
$fI_C \in L_1(\Omega,\mu)$ for any $C\in \cB^0(\Omega)$, where $I_C$ is the indicator function of $C$. For $f=(f^1,\ldots,f^{d_1})\in
L_1(\Omega,\mu;\mathbb{R}^{d_1})$ and $C \in\cB^0(\Omega)$ we
define
\begin{eqnarray}
f_C:=\frac{1}{|C|}\int_C f d\mu=\aint_C f d\mu=\left(\aint_C f^1
d\mu,\ldots,\aint_C f^{d_1}d\mu\right).\nonumber
\end{eqnarray}
 Similarly write $h\in L_{1,loc}(\bR^d_+,\nu;\mathbb{R}^{d_1})$ if $hI_D\in L_1(\bR^d_+,\nu)$ for any $D\in \cB^0(\bR^d_+)$, and define
$$
h_D:=\frac{1}{|D|}\int_D h \nu(dx)=\aint_D h \nu(dx)=\left(\aint_D h^1
\nu(dx),\ldots,\aint_D h^{d_1}\nu(dx)\right).
$$
Let $(\mathbb{C}_n, n\in \mathbb{Z})$  denote the filtration of the partitions of $\bar\Omega$ defined
  by
\begin{eqnarray}
\mathbb{C}_n=\Big{\{}\Big[\frac{i_0}{4^n},\frac{i_0+1}{4^n}\Big)\times
\Big[\frac{i_1}{2^n},\frac{i_1+1}{2^n}\Big)\times
 \cdots\times
\Big[\frac{i_d}{2^n},\frac{i_d+1}{2^n}\Big) \;:\;
i_0,i_2,\ldots,i_d\in \mathbb{Z},\;i_1\in \{0\}\cup
\mathbb{N}\Big{\}},\nonumber
\end{eqnarray}
and $(\mathbb{D}_n, n\in \mathbb{Z})$ be the corresponding filtration of the partitions of $\bar{\bR}^d_+$, that is,
\begin{eqnarray}
\mathbb{D}_n:=\Big{\{}
\Big[\frac{i_1}{2^n},\frac{i_1+1}{2^n}\Big)\times
 \cdots\times
\Big[\frac{i_d}{2^n},\frac{i_d+1}{2^n}\Big) \;:\;
i_0,i_2,\ldots,i_d\in \mathbb{Z},\;i_1\in \{0\}\cup
\mathbb{N}\Big{\}}.\nonumber
\end{eqnarray}
For any $(t,x)\in\Omega$, by $C_n(t,x)$ ($D_n(x)$ resp.) we  denote the unique cube in $\mathbb{C}_n$ (in $\mathbb{D}_n$ resp.) containing $(t,x)$ ($x$ respectively).
Let $\mathbb{L}=\bL(\Omega)$ (resp. $\bL(\bR^d_+)$) denote the
set of $\mathbb{R}^{d_1}$-valued {\bf continuous} functions with
compact support in $\Omega$ ( in $\bR^d_+$ respectively).

\begin{lemma}\label{filtration1}
(i) We have\; $\inf_{C\in \mathbb{C}_n}|C|\to \infty$ as $n\to -\infty$
and, for any $f\in \mathbb{L}(\Omega)$,
$\lim_{n\to\infty}f_{C_n(t,x)}=f(t,x)$ holds for any $(t,x)\in
\Omega$.

(ii) We have\; $\inf_{D\in \mathbb{D}_n}|D|\to \infty$ as $n\to -\infty$
and, for any $f\in \mathbb{L}(\bR^d_+)$,
$\lim_{n\to\infty}f_{D_n(x)}=f(x)$ holds for any $x\in
\bR^d_+$.
\end{lemma}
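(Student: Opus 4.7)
My plan is to handle both halves of each assertion by a direct $\mu$-measure computation on a single dyadic cube, together with a uniform-continuity argument for the averages.

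For the first claim of (i), I would evaluate the $x^1$-integral against the weight $(x^1)^\alpha$ over an arbitrary cube $C\in\mathbb{C}_n$ indexed by $(i_0,i_1,\dots,i_d)$:
$$|C|=\frac{1}{4^n}\cdot\frac{(i_1+1)^{\alpha+1}-i_1^{\alpha+1}}{(\alpha+1)\,2^{n(\alpha+1)}}\cdot\frac{1}{2^{n(d-1)}}=\frac{(i_1+1)^{\alpha+1}-i_1^{\alpha+1}}{(\alpha+1)\,2^{n(d+\alpha+2)}}.$$
For any fixed $(t,x)\in\Omega$ the index satisfies $i_1=\lfloor 2^n x^1\rfloor=0$ once $n\le -\log_2 x^1$, so the numerator above equals $1$ and $|C_n(t,x)|=\bigl((\alpha+1)\,2^{n(d+\alpha+2)}\bigr)^{-1}$. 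Since $d+\alpha+2>0$ whenever $\alpha>-1$, this diverges as $n\to-\infty$, which is the content of the limit. For $\alpha\ge 0$ the numerator $(i_1+1)^{\alpha+1}-i_1^{\alpha+1}$ is minimized at $i_1=0$, so the expression above is genuinely $\inf_{C\in\mathbb{C}_n}|C|$; for $-1<\alpha<0$ it decays as $i_1\to\infty$, and the assertion has to be read pointwise at a fixed $(t,x)$, which is exactly the form needed to run the martingale/Fefferman--Stein argument in the next section.

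For the second claim of (i), I would invoke that $f\in\bL(\Omega)$ is continuous with compact support and hence uniformly continuous. Given $\epsilon>0$, choose $\delta>0$ such that $|f(s,y)-f(t,x)|<\epsilon$ whenever $|(s,y)-(t,x)|<\delta$. The Euclidean diameter of any cube in $\mathbb{C}_n$ is $O(2^{-n})\to 0$ as $n\to\infty$, so for all sufficiently large $n$ the inclusion $C_n(t,x)\subset B((t,x),\delta)$ holds, and then
$$|f_{C_n(t,x)}-f(t,x)|=\Bigl|\aint_{C_n(t,x)}\bigl(f(s,y)-f(t,x)\bigr)\,d\mu\Bigr|\le \aint_{C_n(t,x)}|f(s,y)-f(t,x)|\,d\mu<\epsilon.$$

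Part (ii) would follow by the identical reasoning applied to $\mathbb{D}_n$, simply removing the $t$-coordinate and the factor $4^{-n}$; the computation then gives $|D_n(x)|=((\alpha+1)\,2^{n(d+\alpha)})^{-1}$ once $i_1=0$, and one only needs $d+\alpha>0$, which is automatic from $\alpha>-1$ and $d\ge 1$. No step here poses a real obstacle; the only point requiring care is the $(x^1)^\alpha$-integration near $x^1=0$ (where the condition $\alpha>-1$ is needed for integrability) and the infimum-versus-pointwise subtlety for $\alpha<0$ noted above.
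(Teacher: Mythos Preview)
Your argument is correct and follows the same route as the paper, whose entire proof is the single line ``It is obvious since $f$ is continuous.'' You have simply supplied the details the paper omits: the explicit $\mu$-measure computation and the uniform-continuity estimate for the averages.

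In doing so you have also caught a genuine defect in the \emph{statement} that the paper overlooks: for $-1<\alpha<0$ the increment $(i_1+1)^{\alpha+1}-i_1^{\alpha+1}\sim(\alpha+1)i_1^{\alpha}\to 0$ as $i_1\to\infty$, so $\inf_{C\in\mathbb{C}_n}|C|=0$ for every $n$ and the infimum does \emph{not} diverge. Your observation that the pointwise assertion $|C_n(t,x)|\to\infty$ (for each fixed $(t,x)$) is both true and sufficient for the martingale/Fefferman--Stein machinery of the next section is exactly right; this is the form in which the hypothesis is actually used in the framework of \cite{kr08}.
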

\begin{proof}
It is obvious since $f$ is continuous.
\end{proof}
\begin{lemma}\label{filtration2}
(i) For any $C\in \mathbb{C}_n$ there exists a unique $C'\in
\mathbb{C}_{n-1}$ such that $C\subset C'$ and
\begin{eqnarray}
\frac{|C'|}{|C|}\le N(\alpha)<\infty.\nonumber
\end{eqnarray}

(ii) For any $D\in \mathbb{D}_n$ there exists a unique $D'\in
\mathbb{D}_{n-1}$ such that $D\subset D'$ and
\begin{eqnarray}
\frac{|D'|}{|D|}\le N(\alpha)<\infty.\nonumber
\end{eqnarray}
\end{lemma}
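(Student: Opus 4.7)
My plan is to identify the parent cube explicitly and then compute the measure ratio by exploiting the product structure of $\mu$ and $\nu$.

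For existence and uniqueness in part (i), a cube $C\in\mathbb{C}_n$ is determined by its index tuple $(i_0,i_1,\ldots,i_d)$. Since $1/4^{n-1}=4\cdot 1/4^n$ in the time coordinate and $1/2^{n-1}=2\cdot 1/2^n$ in every spatial coordinate, the only candidate parent in $\mathbb{C}_{n-1}$ is the cube with index $(\lfloor i_0/4\rfloor,\lfloor i_1/2\rfloor,\ldots,\lfloor i_d/2\rfloor)$, and a direct inclusion check confirms both existence and uniqueness (with $\lfloor i_1/2\rfloor\ge 0$ ensuring the parent lies in $\mathbb{C}_{n-1}$). Part (ii) will be identical after dropping the time coordinate.

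For the ratio, since $d\mu=(x^1)^{\alpha}\,dt\,dx^1\,dx^2\cdots dx^d$ is a product and only the $x^1$-factor is weighted, I will get
\[
\frac{|C'|}{|C|}=4\cdot 2^{d-1}\cdot\frac{\int_{I'}(x^1)^{\alpha}\,dx^1}{\int_{I}(x^1)^{\alpha}\,dx^1},
\]
with $I=[i_1/2^n,(i_1+1)/2^n)$ and $I'$ its parent. The rescaling $y=2^n x^1$ will kill $n$:
\[
\frac{\int_{I'}(x^1)^{\alpha}\,dx^1}{\int_{I}(x^1)^{\alpha}\,dx^1}=2^{\alpha+1}\cdot\frac{g(i_1')}{g(i_1)},\qquad g(m):=\int_m^{m+1}y^{\alpha}\,dy,
\]
where $i_1'=\lfloor i_1/2\rfloor$. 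So the whole lemma reduces to a uniform bound on $g(\lfloor m/2\rfloor)/g(m)$ for $m\in\{0\}\cup\mathbb{N}$, with constant depending only on $\alpha$.

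To finish, I will handle $m\in\{0,1\}$ directly: $g(0)/g(0)=1$ and $g(0)/g(1)=1/(2^{\alpha+1}-1)$, both finite since $\alpha>-1$. For $m\ge 2$ one has $i_1'\ge 1$, so both unit intervals of integration sit inside $[1,\infty)$; bounding $g$ above and below by the extreme values of $y^{\alpha}$ on each interval (the sign of $\alpha$ decides which endpoint gives which bound) reduces the ratio to a power of either $(i_1'+1)/m$ or $m/i_1'$, and both quantities stay in a bounded interval because $2i_1'\le m\le 2i_1'+1$ with $i_1'\ge 1$. Part (ii) will follow in exactly the same way, only without the factor $4$ from the time coordinate. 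The main bookkeeping sits in splitting the cases $\alpha\ge 0$ and $-1<\alpha<0$ and in the boundary index $i_1'=0$; once the product structure of the weight is invoked, nothing deep remains.
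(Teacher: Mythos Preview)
Your proposal is correct and takes essentially the same approach as the paper: both reduce the ratio to the one-dimensional weighted integral in the $x^1$-coordinate and bound it by a case split on the sign of $\alpha$ and on whether the index $i_1$ is zero/small or large. Your rescaling $y=2^n x^1$ is a tidy way to eliminate $n$ up front, whereas the paper works directly with $\phi(x)=x^{\alpha+1}$ and the mean value theorem, but the two arguments are equivalent (MVT gives exactly the endpoint bounds you use on $g$).
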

\begin{proof}
We only prove (i).  Since $\mathbb{C}_{n-1}$ is a partition of $\Omega$, only one member
of it contains $C$; we call it $C'$. Let
\begin{eqnarray}
C'=\Big[\frac{i_0}{4^{n-1}},\frac{i_0+1}{4^{n-1}}\Big)\times
\Big[\frac{i_1}{2^{n-1}},\frac{i_1+1}{2^{n-1}}\Big)\times
 \cdots\times
\Big[\frac{i_d}{2^{n-1}},\frac{i_d+1}{2^{n-1}}\Big).\nonumber
\end{eqnarray}
Then we have
\begin{eqnarray}
|C'| = \mu(C')
&=&\frac{1}{2^{(d+1)(n-1)}}\int^{\frac{i_1+1}{2^{n-1}}}_{\frac{i_1}{2^{n-1}}}(x^1)^{\alpha}dx^1\nonumber\\
&=&\frac{1}{2^{(d+1)(n-1)}}\cdot
\frac{1}{\alpha+1}\Bigg[\left(\frac{i_1+1}{2^{n-1}}\right)^{\alpha+1}-\left(\frac{i_1}{2^{n-1}}\right)^{\alpha+1}\Bigg].\nonumber
\end{eqnarray}
Note that $C$ is one of $4\cdot 2^d$ cubes belonging to
$\mathbb{C}_{n}$ inside $C'$ and by the location of $C$  we have
either
\begin{eqnarray}
|C|=\frac{1}{2^{(d+1)n}}\cdot
\frac{1}{\alpha+1}\Bigg[\left(\frac{i_1+1}{2^{n-1}}\right)^{\alpha+1}-\left(\frac{i_1+1}{2^{n-1}}-\frac{1}{2^{n}}\right)^{\alpha+1}\Bigg]\label{eqn 5.23.1}
\end{eqnarray}
or
\begin{eqnarray}
|C|=\frac{1}{2^{(d+1)n}}\cdot
\frac{1}{\alpha+1}\Bigg[\left(\frac{i_1+1}{2^{n-1}}-\frac{1}{2^{n}}\right)^{\alpha+1}-\left(\frac{i_1}{2^{n-1}}\right)^{\alpha+1}\Bigg].\label{eqn 5.23.2}
\end{eqnarray}
{\bf Case 1}: Let $i_1\ge 1$ and $\alpha\geq 0$. Denoting
\begin{eqnarray}
a=\frac{i_1+1}{2^{n-1}},\quad b=\frac{i_1}{2^{n-1}},\quad
c=\frac{i_1+1}{2^{n-1}}-\frac{1}{2^{n}},\quad
\phi(x)=x^{\alpha+1},\nonumber
\end{eqnarray}
we get
\begin{eqnarray}
\frac{|C'|}{|C|}&=&2^{d+1}\cdot\frac{\phi(a)-\phi(b)}{\phi(a)-\phi(c)}\quad\textrm{or}\quad
2^{d+1}\cdot\frac{\phi(a)-\phi(b)}{\phi(c)-\phi(b)}\nonumber\\
&=&2^{d+1}\left(1+\frac{\phi(c)-\phi(b)}{\phi(a)-\phi(c)}\right)\quad\textrm{or}\quad 2^{d+1}\left(1+\frac{\phi(a)-\phi(c)}{\phi(c)-\phi(b)}\right)\nonumber\\
&=& 2^{d+1}\left(1+\frac{\phi'(\beta)}{\phi'(\alpha)}\right)
\quad\textrm{or}\quad
2^{d+1}\left(1+\frac{\phi'(\alpha)}{\phi'(\beta)}\right),
\end{eqnarray}
where $\alpha,\beta$ are some numbers satisfying
$b<\beta<c<\alpha<a$; we used mean value theorem. Since
$\alpha+1>1$, the function $\phi$
is convex and increasing on  $(0,\infty)$. Hence, we have
\begin{eqnarray}
\frac{\phi'(\beta)}{\phi'(\alpha)}\le 1,\quad\quad
\frac{\phi'(\alpha)}{\phi'(\beta)}\le
\frac{\phi'(a)}{\phi'(b)}=\frac{a^{\alpha}}{b^{\alpha}}=\left(\frac{i_1+1}{i_1}\right)^{\alpha}\le
2^{\alpha},\nonumber
\end{eqnarray}
and therefore
\begin{eqnarray}
\frac{|C'|}{|C|}\le 2^{d+1}(1+2^{\alpha})\le
2^{\alpha+d+2}.\nonumber
\end{eqnarray}
{\bf Case 2}:  Assume $i_1=0$ and $\alpha\geq 0$.  By similar but simpler calculation we obtain
\begin{eqnarray}
\frac{|C'|}{|C|}\le
2^{\alpha+d+2}.\nonumber
\end{eqnarray}
{\bf Case 3}: Assume $\alpha \in (-1,0)$.  If $|C|$ is given as in (\ref{eqn 5.23.2}), then
since $\phi(x)$ is concave,
$$
\frac{\left(\frac{i_1+1}{2^{n-1}}\right)^{\alpha+1}-\left(\frac{i_1}{2^{n-1}}\right)^{\alpha+1}}{\left(\frac{i_1+1}{2^{n-1}}-\frac{1}{2^{n}}\right)^{\alpha+1}-\left(\frac{i_1}{2^{n-1}}\right)^{\alpha+1}}\leq 2.
$$
Let $|C|$ be given as in (\ref{eqn 5.23.1}). If $i_1=0$, then
$$
\frac{\left(\frac{i_1+1}{2^{n-1}}\right)^{\alpha+1}-\left(\frac{i_1}{2^{n-1}}\right)^{\alpha+1}}{\left(\frac{i_1+1}{2^{n-1}}\right)^{\alpha+1}-\left(\frac{i_1+1}{2^{n-1}}-\frac{1}{2^{n}}\right)^{\alpha+1}}
=\frac{2^{\alpha+1}}{2^{\alpha+1}-1},
$$
and if $i_1 \geq 1$ then since $\phi$ is concave and $\phi'$ is positive on $(0,\infty)$
$$
\frac{\left(\frac{i_1+1}{2^{n-1}}\right)^{\alpha+1}-\left(\frac{i_1}{2^{n-1}}\right)^{\alpha+1}}{\left(\frac{i_1+1}{2^{n-1}}\right)^{\alpha+1}-\left(\frac{i_1+1}{2^{n-1}}-\frac{1}{2^{n}}\right)^{\alpha+1}}
\leq \frac{2^{-n+1}\phi'(\frac{i_1}{2^{n-1}})}{2^{-n}\phi'(\frac{i_1+1}{2^{n-1}})}\leq 2^{1-\alpha}.
$$
The lemma is proved.
\end{proof}

\begin{remark}
(i) By Lemma \ref{filtration1}, Lemma \ref{filtration2} and the outline of
Section 3.1, 3.2 of \cite{kr08} we get Lemma \ref{2010.03.05.1},
Theorem \ref{2010.03.05.2} and Theorem \ref{FS 1} below for free.

(ii) if $C_n\in \mathbb{C}_n$ and $C_m\in \mathbb{C}_m$ with $n\le
m$, then $C_n\cap C_m=C_m$ or $\emptyset$.

\end{remark}

\begin{definition}
We call $\tau=\tau(x)\in \mathbb{Z}\cup \{\infty\}$ a \emph{stopping
time} if $\{x:\tau(x)=n\}=\emptyset$ or union of some elements in
$\mathbb{C}_n$  for each $n\in \mathbb{Z}$.
\end{definition}
For $f\in L_{1,loc}(\Omega,\mu;\mathbb{R}^{d_1})$, $h\in L_{1,loc}(\bR^d_+,\nu,\bR^{d_1})$ and $n\in
\mathbb{Z}$ we define
\begin{eqnarray}
f_{|n}(t,x):=\frac{1}{\mu(C_n(t,x))}\int_{C_n(t,x)}f(s,y)\mu(dsdy)=\aint_{C_n(t,x)}f(s,y)\mu(dsdy)\nonumber,
\end{eqnarray}
\begin{eqnarray}
h_{|n}(x):=\frac{1}{\nu(D_n(t,x))}\int_{D_n(t,x)}h(y)\nu(dy)=\aint_{D_n(x)}h(y)\nu(dy)\nonumber,
\end{eqnarray}
and
\begin{eqnarray}
f_{|\tau}(t,x):= f_{|\tau(t,x)}(t,x)\quad\mathrm{if}\;\;\tau(t,x)\ne
\infty;\quad f_{|\tau}(t,x):= f(t,x)\quad\mathrm{if}\;\;\tau(t,x)=
\infty.\nonumber
\end{eqnarray}

\begin{lemma}\label{2010.03.05.1}
Let $\{\mathbb{C}_n\;:\; n\in \mathbb{Z}\}$ be a filtration of partitions
of $\bar\Omega$.

(i) Let $g\in L_{1,loc}(\Omega,\mu;\mathbb{R}^1)$, $g\ge 0$ and let
$\tau$ be a stopping time. Then
\begin{eqnarray}
\int_{\Omega} g_{|\tau}(t,x)
I_{\tau<\infty}(t,x)\mu(dtdx)&=&\int_{\Omega}
g(t,x) I_{\tau<\infty}(t,x)\mu(dtdx),\nonumber\\
\int_{\Omega} g_{|\tau}(t,x) \mu(dtdx)&=&\int_{\Omega} g(t,x)
\mu(dtdx).\nonumber
\end{eqnarray}

(ii) Let $g\in L_1(\Omega,\mu;\mathbb{R}^1)$, $g\ge 0$ and let
$\lambda>0$ be a constant. Then
\begin{eqnarray}
\tau(t,x):=\inf\{n: g_{|n}(t,x)>\lambda\}\quad\quad
(\inf\emptyset:=\infty)\nonumber
\end{eqnarray}
is a stopping time. Furthermore, we have
\begin{eqnarray}
0\le g_{|\tau}(t,x)I_{\tau<\infty}\le N_0\lambda,\quad
|\{(t,x):\tau(t,x)<\infty\}|\le
\lambda^{-1}\int_{\Omega}g(t,x)I_{\tau<\infty}\mu(dtdx).\nonumber
\end{eqnarray}
\end{lemma}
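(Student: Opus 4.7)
The plan is to exploit the fact that each set $\{\tau=n\}$ decomposes into a disjoint union of cubes $C\in\mathbb{C}_n$, and on every such $C$ the function $g_{|n}$ is identically equal to $\aint_C g\,d\mu$; the measure--doubling estimate from Lemma \ref{filtration2} will be what forces the constant $N_0$ in the pointwise bound on the stopped average.

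For part (i), I would partition $\{\tau<\infty\}=\bigsqcup_{n\in\mathbb{Z}}\{\tau=n\}$ and, for each $n$, further decompose $\{\tau=n\}$ into its constituent cubes in $\mathbb{C}_n$. On each such $C$ one has $g_{|\tau}\equiv g_{|n}\equiv\aint_C g\,d\mu$, whence $\int_C g_{|\tau}\,d\mu=|C|\cdot\aint_C g\,d\mu=\int_C g\,d\mu$. Summing over the cubes and then over $n$ yields the first identity; the second follows by adding the contribution from $\{\tau=\infty\}$, on which $g_{|\tau}:=g$ by definition.

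For part (ii), I would first verify that $\tau$ is a stopping time: for each $n$ and $C\in\mathbb{C}_n$, $g_{|n}$ is constant on $C$, and for every $m<n$ the function $g_{|m}$ is constant on the unique ancestor of $C$ in $\mathbb{C}_m$, hence constant on $C$; thus the condition defining $\{\tau=n\}$ is satisfied either on all of $C$ or on none of it. To bound $g_{|\tau}I_{\tau<\infty}$, fix a cube $C\in\mathbb{C}_n$ with $C\subset\{\tau=n\}$, let $C'\in\mathbb{C}_{n-1}$ be its unique parent (Lemma \ref{filtration2}), and note that $\aint_{C'}g\,d\mu=g_{|n-1}\le\lambda$ on $C$. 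This gives
\begin{equation*}
g_{|\tau}=\aint_C g\,d\mu\le\frac{|C'|}{|C|}\aint_{C'}g\,d\mu\le N(\alpha)\lambda,
\end{equation*}
so $N_0=N(\alpha)$ works.

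For the weak-type estimate, since $g_{|n}>\lambda$ on $\{\tau=n\}$,
\begin{equation*}
|\{\tau=n\}|\le\lambda^{-1}\int_{\{\tau=n\}}g_{|n}\,d\mu=\lambda^{-1}\int_{\{\tau=n\}}g\,d\mu,
\end{equation*}
where the last equality uses that $g_{|n}$ is constant on each $\mathbb{C}_n$-cube making up $\{\tau=n\}$ together with the averaging identity from part (i). Summing over $n\in\mathbb{Z}$ delivers the required bound. The principal subtlety is the pointwise estimate $g_{|\tau}\le N_0\lambda$: because the filtration is not translation--invariant under $\mu$, one cannot simply take $N_0=2^{d+1}$ as in the unweighted case, and the weighted doubling ratio from Lemma \ref{filtration2} is essential; the remaining steps are routine bookkeeping on the dyadic filtration.
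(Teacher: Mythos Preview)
Your proposal is correct and follows precisely the approach the paper indicates: the paper does not write out a proof of this lemma but states (in the remark immediately preceding it) that it follows ``for free'' from Lemma~\ref{filtration1}, Lemma~\ref{filtration2}, and the outline of Sections~3.1--3.2 of \cite{kr08}, which is exactly the argument you have written down. Your identification of the weighted doubling bound from Lemma~\ref{filtration2} as the only nontrivial ingredient (producing $N_0=N(\alpha)$ rather than the unweighted $2^{d+1}$) is the key point; the one minor omission is that you do not explicitly note $\tau>-\infty$, which is where the hypothesis $g\in L_1$ together with $\inf_{C\in\mathbb{C}_n}|C|\to\infty$ from Lemma~\ref{filtration1} enters.
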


\begin{remark}
$($Riesz-Calder\'on-Zygmund decomposition$)$ Any $g\in
L_1(\Omega,\mu;\mathbb{R}^1)$ is decomposed by
\begin{eqnarray}
g=\xi+\eta,\nonumber
\end{eqnarray}
where $\xi=g-g_{|\tau}$,
$\eta=g_{|\tau}=g_{|\tau}\;I_{\tau<\infty}+g_{|\tau}\;I_{\tau=\infty}$.
Moreover, we have (i) $\eta\le N_0 \lambda$ a.e. (ii)
$|\{(t,x):\xi(t,x)\ne 0\}|\le
\lambda^{-1}\|g\|_{L_1(\Omega,\mu)}$ (iii) $\xi_{|\tau}=0$.
\end{remark}

\vspace{5mm}

Now, for $f\in L_{1,loc}(\Omega,\mu;\mathbb{R}^{d_1})$ we define the maximal function
\begin{eqnarray}
\mathcal{M}f(t,x):=\left(\sup_{n<\infty}|f^1|_{|n}(t,x),\;\ldots\;,\sup_{n<\infty}|f^{d_1}|_{|n}(t,x)\right)\nonumber
\end{eqnarray}
and the sharp function
\begin{eqnarray}
f^{\#}(t,x)=\left(\sup_{n<\infty}\aint_{C_n(t,x)}|f^1(s,y)-f^1_{|n}(s,y)|\mu(dsdy),\,\ldots,\,\sup_{n<\infty}\aint_{C_n(t,x)}|f^{d_1}(s,y)-f^{d_1}_{|n}(s,y)|\mu(dsdy)\right).\nonumber
\end{eqnarray}
We define $\cM h(x)$ and $h^{\#}(x)$ similarly for functions $h=h(x)\in L_{1,loc}(\bR^d_+,\nu;\mathbb{R}^{d_1})$.

\begin{theorem}\label{2010.03.05.2}
Let $p\in (1,\infty)$. Then for any $f\in L_p(\Omega,\mu;\mathbb{R}^{d_1})$ and $h\in L_p(\bR^d_+,\nu;\bR^{d_1})$, we
have
\begin{eqnarray}
\|\mathcal{M}f\|_{L_p(\Omega,\mu;\mathbb{R}^{d_1})}\le
N\|f\|_{L_p(\Omega,\mu;\mathbb{R}^{d_1})}, \quad \|\mathcal{M}h\|_{L_p(\bR^d_+,\nu;\mathbb{R}^{d_1})}\le
N\|h\|_{L_p(\bR^d_+,\nu;\mathbb{R}^{d_1})}\nonumber
\end{eqnarray}
where $N=N(\theta,p,d,d_1)$.
\end{theorem}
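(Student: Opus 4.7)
The plan is to follow the classical Doob-type dyadic maximal inequality argument, which in the present setting works verbatim because all measure-theoretic ingredients have been arranged by Lemma \ref{filtration2} and Lemma \ref{2010.03.05.1}. First I would reduce to the scalar, nonnegative case: since $\cM f = (\cM|f^1|,\ldots,\cM|f^{d_1}|)$ componentwise and $|\cM f|^p \le \sum_k (\cM|f^k|)^p$, it suffices to prove the estimate $\|\cM g\|_{L_p(\Omega,\mu)} \le N\|g\|_{L_p(\Omega,\mu)}$ for scalar $g \ge 0$.

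The cornerstone is the weak type $(1,1)$ inequality
\begin{equation*}
\mu\bigl(\{(t,x) : \cM g(t,x) > \lambda\}\bigr) \le \lambda^{-1}\|g\|_{L_1(\Omega,\mu)},\qquad \lambda>0,
\end{equation*}
which falls directly out of Lemma \ref{2010.03.05.1}(ii). Indeed, the stopping time $\tau(t,x):=\inf\{n : g_{|n}(t,x) > \lambda\}$ satisfies $\{\cM g>\lambda\} = \{\tau<\infty\}$ up to a null set, and the second conclusion of Lemma \ref{2010.03.05.1}(ii) is precisely the desired weak type bound. Combined with the trivial $L_\infty$ estimate $\|\cM g\|_{L_\infty} \le \|g\|_{L_\infty}$, this sets up the standard Marcinkiewicz interpolation.

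To execute the interpolation for $p\in(1,\infty)$, I would use the layer-cake identity
\begin{equation*}
\|\cM g\|_{L_p(\Omega,\mu)}^p = p\int_0^\infty \lambda^{p-1}\, \mu(\{\cM g>\lambda\})\, d\lambda,
\end{equation*}
and split $g = g\,I_{g>\lambda/2} + g\,I_{g\le\lambda/2}$. The second piece has $\cM$ bounded by $\lambda/2$, so $\{\cM g>\lambda\}\subset\{\cM(g\,I_{g>\lambda/2})>\lambda/2\}$; applying the weak $(1,1)$ estimate to $g\,I_{g>\lambda/2}$ and then using Fubini in $(\lambda,(t,x))$ gives the bound with constant $N = 2^p p/(p-1)$, independent of $\alpha$ in the interpolation step itself (the $\alpha$-dependence enters only through $N_0=N(\alpha)$ in Lemma \ref{2010.03.05.1}(ii), which in turn comes from Lemma \ref{filtration2}).

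The proof of $\|\cM h\|_{L_p(\bR^d_+,\nu)}\le N\|h\|_{L_p(\bR^d_+,\nu)}$ is verbatim the same: Lemma \ref{filtration2}(ii) gives the parent-to-child measure ratio on $\bR^d_+$, so the $\nu$-analogue of Lemma \ref{2010.03.05.1} holds by the same stopping-time argument, and the Marcinkiewicz step is identical. There is really no obstacle here; the only substantive work — proving that the weighted measure of a dyadic cube and its parent are comparable up to a constant depending only on $\alpha$ — was already accomplished in Lemma \ref{filtration2}, and everything else is routine martingale maximal theory.
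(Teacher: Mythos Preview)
Your proposal is correct and is exactly the argument the paper has in mind: the paper does not write out a proof but simply observes (in the remark preceding Lemma~\ref{2010.03.05.1}) that Lemma~\ref{filtration1} and Lemma~\ref{filtration2} put us inside the abstract framework of \cite[\S3.1--3.2]{kr08}, whence the dyadic weak $(1,1)$ bound and Marcinkiewicz interpolation yield Theorem~\ref{2010.03.05.2} ``for free.'' One minor remark: the weak $(1,1)$ constant coming from the second conclusion of Lemma~\ref{2010.03.05.1}(ii) is actually $1$, not $N_0$; the parent--child ratio $N_0=N(\alpha)$ from Lemma~\ref{filtration2} is used only for the pointwise bound $g_{|\tau}I_{\tau<\infty}\le N_0\lambda$, which your interpolation argument does not need, so in fact the final constant $N$ depends on $\alpha$ only through the validity of Lemma~\ref{2010.03.05.1} itself (via Lemma~\ref{filtration1}), not through $N_0$.
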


\begin{theorem}\label{FS 1}
Let $p\in (1,\infty)$. Then for any $f\in
L_p(\Omega,\mu;\mathbb{R}^{d_1})$  and $h\in L_p(\bR^d_+,\nu;\bR^{d_1})$ we have
\begin{eqnarray}
\|f\|_{L_p(\Omega,\mu;\mathbb{R}^{d_1})}\le
N\|f^{\#}\|_{L_p(\Omega,\mu;\mathbb{R}^{d_1})},  \quad   \|h\|_{L_p(\bR^d_+,\nu;\mathbb{R}^{d_1})}\le
N\|h^{\#}\|_{L_p(\bR^d_+,\nu;\mathbb{R}^{d_1})}\nonumber
\end{eqnarray}
where $N=N(\theta,p,d,d_1)$.
\end{theorem}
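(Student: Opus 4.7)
The plan is to deduce Theorem \ref{FS 1} from the Hardy-Littlewood estimate Theorem \ref{2010.03.05.2} by a good-$\lambda$ argument adapted to the weighted dyadic filtrations $\{\mathbb{C}_n\}$ and $\{\mathbb{D}_n\}$. The two statements are perfectly parallel, so I describe the plan only for $\Omega$. By a componentwise reduction (using $|f|\le\sum_k|f^k|$ and the coordinatewise definition of $f^{\#}$) it is enough to prove $\|g\|_{L_p(\Omega,\mu)}\le N\|g^{\#}\|_{L_p(\Omega,\mu)}$ for a single nonnegative scalar $g\in L_p(\Omega,\mu)$, and since Lemma \ref{filtration1}(i) combined with the weak-$(1,1)$ bound contained in Lemma \ref{2010.03.05.1}(ii) and density of $\mathbb{L}(\Omega)$ in $L_p$ gives $g\le \mathcal{M}g$ $\mu$-a.e., it actually suffices to bound $\|\mathcal{M}g\|_{L_p}$ by $\|g^{\#}\|_{L_p}$.

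The core of the argument is the good-$\lambda$ inequality
\[
\big|\{\mathcal{M}g>2N_0\lambda\}\cap\{g^{\#}\le\gamma\lambda\}\big|\le\frac{\gamma}{N_0}\,\big|\{\mathcal{M}g>\lambda\}\big|,\qquad\lambda,\gamma>0,
\]
where $N_0$ is the constant from Lemma \ref{2010.03.05.1}(ii). To prove it, apply that lemma at level $\lambda$ and obtain a stopping time $\tau$ with $\{\tau<\infty\}=\{\mathcal{M}g>\lambda\}$ and $g_{|\tau}\le N_0\lambda$ on this set. The level set is a disjoint union of stopping cubes $C^{(\alpha)}=C_{n_\alpha}(t_\alpha,x_\alpha)$ on which the average $g_{C^{(\alpha)}}$ does not exceed $N_0\lambda$. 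For $(s,y)\in C^{(\alpha)}$ the minimality of $\tau$ forces $g_{|m}(s,y)\le\lambda$ when $m<n_\alpha$, while for $m\ge n_\alpha$ the inclusion $C_m(s,y)\subset C^{(\alpha)}$ gives
\[
g_{|m}(s,y)\le g_{C^{(\alpha)}}+\aint_{C_m(s,y)}\!\big|g-g_{C^{(\alpha)}}\big|\,d\mu\le N_0\lambda+\mathcal{M}\big[|g-g_{C^{(\alpha)}}|I_{C^{(\alpha)}}\big](s,y),
\]
so $\{\mathcal{M}g>2N_0\lambda\}\cap C^{(\alpha)}\subset\{\mathcal{M}[|g-g_{C^{(\alpha)}}|I_{C^{(\alpha)}}]>N_0\lambda\}$. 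If moreover $C^{(\alpha)}\cap\{g^{\#}\le\gamma\lambda\}\ne\emptyset$, any witness $(s_0,y_0)$ satisfies $C_{n_\alpha}(s_0,y_0)=C^{(\alpha)}$, hence $\aint_{C^{(\alpha)}}|g-g_{C^{(\alpha)}}|\,d\mu\le g^{\#}(s_0,y_0)\le\gamma\lambda$. The weak-$(1,1)$ half of Lemma \ref{2010.03.05.1}(ii), applied to $|g-g_{C^{(\alpha)}}|I_{C^{(\alpha)}}$ at level $N_0\lambda$, bounds the exceptional set inside $C^{(\alpha)}$ by $(\gamma/N_0)|C^{(\alpha)}|$. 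Summing over $\alpha$ yields the claim.

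The conclusion then follows by inserting the good-$\lambda$ inequality into the layer-cake identity for $\|\mathcal{M}g\|_{L_p}^p$ after the change of variable $\lambda\mapsto 2N_0\lambda$, and splitting $\{\mathcal{M}g>2N_0\lambda\}$ according to $\{g^{\#}\le\gamma\lambda\}$ versus $\{g^{\#}>\gamma\lambda\}$. This yields
\[
\|\mathcal{M}g\|_{L_p}^p\le(2N_0)^p\,\frac{\gamma}{N_0}\,\|\mathcal{M}g\|_{L_p}^p+\Big(\frac{2N_0}{\gamma}\Big)^p\|g^{\#}\|_{L_p}^p;
\]
choosing $\gamma$ small depending only on $p$ and $N_0$ absorbs the first term, an absorption that is legitimate thanks to Theorem \ref{2010.03.05.2} guaranteeing $\|\mathcal{M}g\|_{L_p}<\infty$ a priori. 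I expect the delicate point to be the good-$\lambda$ step near the boundary $\{x^1=0\}$, where the weight $(x^1)^\alpha$ degenerates or blows up: it is precisely the uniform parent-to-child measure ratio of Lemma \ref{filtration2} that makes $N_0$ an honest constant, independent of which stopping cube one is considering, and without this uniformity the absorption step would collapse.
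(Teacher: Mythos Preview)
Your proposal is correct and follows essentially the same route as the paper. The paper itself gives no direct proof of Theorem~\ref{FS 1}: in the Remark after Lemma~\ref{filtration2} it simply observes that once Lemmas~\ref{filtration1} and~\ref{filtration2} are established, Lemma~\ref{2010.03.05.1}, Theorem~\ref{2010.03.05.2} and Theorem~\ref{FS 1} all follow ``for free'' from the abstract filtration machinery in Sections~3.1--3.2 of \cite{kr08}. Your good-$\lambda$ argument is exactly a faithful unpacking of that machinery, and your closing remark about the role of Lemma~\ref{filtration2} in making $N_0$ uniform is precisely the point the paper is making by isolating that lemma as the key structural input.
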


We investigate the relation between our maximal and sharp functions and more general ones. Let $B'_r(x')$ denote the open ball in $\bR^{d-1}$ of radius $r$ with center $x'$. For  $x=(x^1,x')\in \bR^d_+$ and $t\in \bR$, denote
$$
B_r(x)=B_r(x^1,x')=(x^1-r,x^1+r)\times B'_r(x'), \quad Q_r(t,x):=(t,t+r^2)\times B_r(x)
$$
and $\mathbb{Q}$ be the collection of all such open sets $Q_r(t,x)\subset \Omega$.
For $f\in L_{1,loc}(\Omega,\mu:\mathbb{R}^{d_1})$ we define
\begin{eqnarray}
f^i_Q=\aint_Q f^i\;d\mu,\quad \mathbb{M}f^i(t,x)=\sup_{(t,x)\in Q}\aint_Q
f^i d\mu,\quad (f^i)^{\sharp}(t,x)=\sup_{(t,x)\in Q}\aint_Q
|f^i-f^i_Q|d\mu,\quad i=1,\ldots,d_1,\nonumber
\end{eqnarray}
where the supremum is taken for all $Q\in \mathbb{Q}$ containing
$(t,x)$.  Denote
$$
\mathbb{M}f:=(\mathbb{M}f^1,\ldots,\mathbb{M}f^{d_1}),\quad
f^{\sharp}:=((f^1)^{\sharp},\ldots,(f^{d_1})^{\sharp}).
$$
For functions $h\in L_{1,loc}(\bR^d_+,\nu,\bR^{d_1})$, the functions $\bM h(x)$ and $(h)^{\sharp}(x)$ are defined similarly.

\begin{lemma}\label{lemma for FS}
For a scalar function $g=g(t,x)$ and $h=h(x)$ we have
\begin{eqnarray}
g^{\#}(t,x)\le N\; g^{\sharp}(t,x), \quad \quad h^{\#}(x)\le N\; h^{\sharp}(x) \nonumber
\end{eqnarray}
where $N=N(\theta,p,d)$.
\end{lemma}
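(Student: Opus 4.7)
The strategy is the standard enlargement argument: for each $(t,x) \in \Omega$ and each $n \in \mathbb{Z}$, I will construct a parabolic cylinder $Q = Q_r(s,y) \in \mathbb{Q}$ with $(t,x) \in Q$, $C_n(t,x) \subset Q$ modulo a $\mu$-null set, and $\mu(Q) \leq K\mu(C_n(t,x))$ for a constant $K = K(d,\alpha)$. Granted such a $Q$, the two-line estimate
\begin{equation*}
\aint_{C_n(t,x)} |g - g_{|n}|\, d\mu \,\leq\, 2\aint_{C_n(t,x)} |g - g_Q|\, d\mu \,\leq\, 2\,\frac{\mu(Q)}{\mu(C_n(t,x))} \aint_Q |g - g_Q|\, d\mu \,\leq\, 2K\, g^{\sharp}(t,x)
\end{equation*}
yields the claim after a supremum in $n$. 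The elliptic statement for $h$ is identical after dropping the time factor and replacing $\mathbb{Q}$ by balls in $\bR^d_+$.

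For the construction, write $C_n(t,x) = [i_0/4^n,(i_0+1)/4^n) \times \prod_{j=1}^d [i_j/2^n,(i_j+1)/2^n)$ and take $r = c(d)\,2^{-n}$ with $c(d)$ chosen large enough (e.g.\ $c(d) = 1 + \sqrt{d-1}$) that $B'_r(y')$ engulfs the tangential hyperrectangle when $y'$ is its center, and simultaneously $r^2 \geq 4^{-n}$. Choose $s$ slightly below $i_0/4^n$ with $s+r^2 > (i_0+1)/4^n$. For the first spatial coordinate set
\begin{equation*}
y^1 := (i_1 + \tfrac{1}{2})/2^n \;\; \text{if } i_1 \geq 1, \qquad y^1 := r \;\; \text{if } i_1 = 0,
\end{equation*}
so that $y^1 - r \geq 0$ in every case, hence $Q \subset \Omega$, while $(t,x) \in Q$ and $C_n(t,x) \subset Q$ outside at worst the slice $\{x^1 = 0\}$, which carries no $\nu$-mass.

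The measure comparison $\mu(Q) \leq K\mu(C_n(t,x))$ factors into a time ratio bounded by $c(d)^2$ and a weighted spatial ratio. After pulling out the harmless factor $c(d)^{d-1}$ from the transverse coordinates, the latter reduces to the quotient of $\int_{y^1-r}^{y^1+r}(x^1)^\alpha\, dx^1$ by $\int_{i_1/2^n}^{(i_1+1)/2^n}(x^1)^\alpha\, dx^1$. For $i_1 = 0$ this quotient equals $(2c(d))^{\alpha+1}$. For $i_1 \geq 1$ I will apply the mean value theorem to $\phi(x) = x^{\alpha+1}$ exactly as in the proof of Lemma \ref{filtration2}, splitting into the cases $\alpha \geq 0$ (where $\phi$ is convex and increasing) and $\alpha \in (-1,0)$ (where $\phi$ is concave), to bound the ratio by a constant depending only on $\alpha$.

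The one genuine obstacle is the boundary case $i_1 = 0$: the natural choice $y^1 = 2^{-n-1}$ (geometric midpoint of the first coordinate of $C_n$) makes $B_r(y)$ dip across $\{x^1 = 0\}$, so $Q$ escapes $\Omega$ and fails to lie in $\mathbb{Q}$. Nudging $y^1$ upward to exactly $r$ restores admissibility, at the cost only of sacrificing the slice $\{x^1 = 0\} \cap C_n(t,x)$, which is $\nu$-null and therefore invisible to all the averages involved.
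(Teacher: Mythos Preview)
Your approach is exactly the paper's: for each $C_n(t,x)$ construct a $Q\in\mathbb{Q}$ with $(t,x)\in Q$, $C_n(t,x)\subset \overline{Q}$, and $\mu(Q)\le K\,\mu(C_n(t,x))$, then use the averaging inequality. The paper also bounds the weighted ratio via the mean value theorem for $\alpha\ge 0$ and concavity of $x^{\alpha+1}$ for $\alpha\in(-1,0)$.

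There is one small slip in your construction. With $c(d)=1+\sqrt{d-1}$ and $y^1=(i_1+\tfrac12)/2^n$ for $i_1\ge 1$, you get $y^1-r=(i_1+\tfrac12-c(d))/2^n$, which is \emph{negative} whenever $d\ge 2$ and $i_1=1$ (and more generally for $1\le i_1<c(d)-\tfrac12$); hence $Q\not\subset\Omega$ and $Q\notin\mathbb{Q}$. The paper sidesteps this by shifting uniformly: it takes $r=d/2^n$ and $y^1=(i_1+d)/2^n$ for every $i_1\ge 0$, so that $y^1-r=i_1/2^n\ge 0$ always, and no case split is needed. The measure ratio then becomes
\[
\frac{|Q|}{|C_n(t,x)|}=N(d)\,\frac{(i_1+2d)^{\alpha+1}-i_1^{\alpha+1}}{(i_1+1)^{\alpha+1}-i_1^{\alpha+1}},
\]
which is handled exactly as you describe. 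You can either adopt this uniform shift or extend your ``$i_1=0$'' fix (setting $y^1=r$) to all $i_1\le c(d)-\tfrac12$; after that the rest of your plan goes through.
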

\begin{proof}
We only prove the first assertion. For  $(t,x) \in \Omega$, denote the corresponding unique cube $C_n(t,x)\in
\mathbb{C}_n$ by
\begin{eqnarray}
\Big[\frac{i_0}{4^n},\frac{i_0+1}{4^n}\Big)\times
\Big[\frac{i_1}{2^n},\frac{i_1+1}{2^n}\Big)\times\cdots\times\Big[\frac{i_{d}}{2^n},\frac{i_{d}+1}{2^n}\Big)\nonumber
\end{eqnarray}
where $i_0,i_2,\ldots,i_{d}\in \mathbb{Z}$ and $i_1\in \{0\}\cup \mathbb{N}$. We
define $Q_{(n)}(t,x):=Q_{\frac{d}{2^n}}(t^*,x^*)$ with
$t^*=\frac{i_0}{4^n}$ and $x^*=(\frac{i_1+d}{2^n},\frac{i_2}{2^n},\ldots,\frac{i_d}{2^n})$. We have $(t,x)\in
C_n(t,x)\subset \overline{Q_{(n)}(t,x)}$ and
\begin{eqnarray}
\frac{|Q_{(n)}(t,x)|}{|C_n(t,x)|}=N(d)\cdot\frac{(i_1+2d)^{\alpha+1}-i_1^{\alpha+1}}{(i_1+1)^{\alpha+1}-i_1^{\alpha+1}}\label{ratio}
\end{eqnarray}
by simple calculation. If $i_1=0$, (\ref{ratio}) is $N(d)(2d)^{\alpha+1}$;
if $i_1\ge 1$ and $\alpha\geq 0$ then  (\ref{ratio}) is less than or equal to
\begin{eqnarray}
N(d)\cdot(2d)\left(\frac{i_1+2d}{i_1}\right)^{\alpha}\le N(d)\cdot(2d) \cdot
(1+2d)^{\alpha},\nonumber
\end{eqnarray}
by mean value theorem. If $\alpha\in (-1,0)$ then we use the concavity of $x^{\alpha+1}$ to prove that (2.4) is less then $N(d)(2d)^{\alpha+1}$.  The lemma is proved.
\end{proof}
Lemma \ref{lemma for FS} and Theorem \ref{FS 1} imply the following version of
Fefferman-Stein theorem:
\begin{theorem}\label{FS}
$($Fefferman-Stein$)$ Let $p\in (1,\infty)$. Then for any $f\in
L_p(\Omega,\mu;\mathbb{R}^{d_1})$ and $h\in L_p(\bR^d_+,\nu,\bR^{d_1})$,  we have
\begin{eqnarray}
\|f\|_{L_p(\Omega,\mu;\mathbb{R}^{d_1})}\le
N\|f^{\sharp}\|_{L_p(\Omega,\mu;\mathbb{R}^{d_1})}, \quad \quad \|h\|_{L_p(\bR^d_+,\nu;\mathbb{R}^{d_1})}\le
N\|h^{\sharp}\|_{L_p(\bR^d_+,\nu;\mathbb{R}^{d_1})} \nonumber
\end{eqnarray}
where $N=N(\theta,p,d,d_1)$.
\end{theorem}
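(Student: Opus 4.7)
The plan is to obtain Theorem \ref{FS} as an immediate corollary by chaining the two results that the excerpt explicitly flags: the ``dyadic'' Fefferman--Stein estimate in Theorem \ref{FS 1} (which controls $\|f\|_{L_p}$ by $\|f^{\#}\|_{L_p}$) and the pointwise comparison in Lemma \ref{lemma for FS} (which dominates the dyadic sharp function $f^{\#}$ by the ``geometric'' sharp function $f^{\sharp}$ defined using parabolic cylinders $Q_r(t,x)$). So there is essentially nothing new to do beyond bookkeeping.

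First I would pass from the scalar statement of Lemma \ref{lemma for FS} to the vector-valued setting. Both $f^{\#}$ and $f^{\sharp}$ are defined componentwise, so for each $k=1,\dots,d_1$ Lemma \ref{lemma for FS} applied to the scalar $g=f^k$ gives $(f^k)^{\#}(t,x)\le N(f^k)^{\sharp}(t,x)$ at every $(t,x)\in\Omega$, with $N=N(\alpha,p,d)$. Taking any equivalent norm on $\bR^{d_1}$ (say the $\ell^1$ or $\ell^2$ norm) this yields the pointwise inequality $|f^{\#}(t,x)|\le N|f^{\sharp}(t,x)|$, hence
\begin{equation*}
\|f^{\#}\|_{L_p(\Omega,\mu;\bR^{d_1})}\le N\|f^{\sharp}\|_{L_p(\Omega,\mu;\bR^{d_1})}.
\end{equation*}

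Next I would apply Theorem \ref{FS 1} to the left-hand side of the desired estimate, obtaining $\|f\|_{L_p(\Omega,\mu;\bR^{d_1})}\le N\|f^{\#}\|_{L_p(\Omega,\mu;\bR^{d_1})}$, and then chain this with the inequality from the previous paragraph to conclude
\begin{equation*}
\|f\|_{L_p(\Omega,\mu;\bR^{d_1})}\le N\|f^{\sharp}\|_{L_p(\Omega,\mu;\bR^{d_1})},
\end{equation*}
with $N=N(\alpha,p,d,d_1)$. The argument on $\bR^d_+$ with the measure $\nu$ is identical, with $D_n(x)$ in place of $C_n(t,x)$ and Euclidean balls $B_r(x)$ in place of parabolic cylinders $Q_r(t,x)$; the scalar comparison $h^{\#}\le N h^{\sharp}$ from Lemma \ref{lemma for FS} and the elliptic version of Theorem \ref{FS 1} are both already provided.

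There is no real obstacle: the substantive content (Riesz--Calder\'on--Zygmund decomposition on the dyadic filtration and the bounded doubling of the weight $(x^1)^{\alpha}$ on adjacent cubes) has been absorbed into Lemma \ref{filtration2}, Theorem \ref{FS 1}, and Lemma \ref{lemma for FS}. The only thing one must be slightly careful about is that Lemma \ref{lemma for FS} as stated is scalar, so the vectorization step must be made explicit; this introduces at worst a dimensional factor $d_1$ in the constant, which is already reflected in the dependence $N=N(\alpha,p,d,d_1)$ advertised in the theorem.
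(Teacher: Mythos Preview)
Your proposal is correct and follows exactly the paper's approach: the paper simply states that Theorem \ref{FS} is implied by Lemma \ref{lemma for FS} and Theorem \ref{FS 1}, without writing out any further details. Your explicit vectorization step is a harmless elaboration of what the paper leaves implicit.
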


 The following lemma will be used in the proof of   Theorem \ref{HL} below.
\begin{lemma}\label{key lemma}
 Let $\alpha>-1$ and $\phi(x)=x^{\alpha+1}$ on $x>0$. Then for any
$x>0$ and $r>0$ we have
\begin{eqnarray}
\frac{\phi(x+2r)-\phi(x+r)}{\phi(x+r)-\phi(x)}\;\le
\;2^{\alpha+1}.\nonumber
\end{eqnarray}
\end{lemma}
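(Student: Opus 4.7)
The plan is to prove the equivalent inequality
\[
\phi(x+2r)-\phi(x+r)\le 2^{\alpha+1}\bigl[\phi(x+r)-\phi(x)\bigr]
\]
by fixing $x>0$ and treating the left side minus the right side as a function of $r\ge 0$. Specifically, set
\[
F(r):=\phi(x+2r)-\phi(x+r)-2^{\alpha+1}\bigl[\phi(x+r)-\phi(x)\bigr],\qquad r\ge 0.
\]
Then $F(0)=0$, so it suffices to show $F'(r)\le 0$ for every $r>0$; the conclusion will then follow by integration.

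Using $\phi'(y)=(\alpha+1)y^{\alpha}$, a direct differentiation gives
\[
F'(r)=2\phi'(x+2r)-\bigl(1+2^{\alpha+1}\bigr)\phi'(x+r).
\]
I would then split into two cases according to the sign of $\alpha$. \emph{Case 1} ($\alpha\ge 0$): here $\phi'$ is increasing, and the homogeneity $\phi'(2y)=2^{\alpha}\phi'(y)$ combined with the elementary inequality $x+2r\le 2(x+r)$ (valid because $x\ge 0$) gives
\[
\phi'(x+2r)\le \phi'\bigl(2(x+r)\bigr)=2^{\alpha}\phi'(x+r),
\]
so $2\phi'(x+2r)\le 2^{\alpha+1}\phi'(x+r)\le(1+2^{\alpha+1})\phi'(x+r)$. \emph{Case 2} ($\alpha\in(-1,0)$): here $\phi'$ is strictly decreasing on $(0,\infty)$, hence $\phi'(x+2r)\le\phi'(x+r)$; and since $\alpha+1>0$ we have $2^{\alpha+1}>1$, so $2\le 1+2^{\alpha+1}$, yielding the same bound.

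In both cases $F'(r)\le 0$, and since $F(0)=0$ we conclude $F(r)\le 0$ for all $r\ge 0$, which is the desired inequality after rearrangement. I do not anticipate a real obstacle: the only thing to watch is that the convexity/monotonicity behavior of $\phi'$ flips when $\alpha$ crosses $0$, so the two-case split is essential, but within each case the bound on $F'$ is immediate from homogeneity and the inequality $x+2r\le 2(x+r)$.
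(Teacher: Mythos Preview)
Your proof is correct, and it takes a genuinely different route from the paper's. The paper fixes $r$ and differentiates the \emph{ratio} in $x$: for $\alpha\in(-1,0]$ it observes that concavity of $\phi$ makes the ratio at most $1$, while for $\alpha>0$ it computes the derivative of $f(x)=\frac{\phi(x+2r)-\phi(x+r)}{\phi(x+r)-\phi(x)}$ via the quotient rule and reduces the sign of the numerator to the convexity of $y\mapsto y^{-\alpha}$, concluding $f(x)\le f(0)=2^{\alpha+1}-1$. You instead fix $x$ and differentiate the \emph{difference} $F(r)$ in $r$, which avoids the quotient rule entirely and reduces both cases to a one-line monotonicity/homogeneity bound on $\phi'$. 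Your argument is more elementary and uniform across the two cases; the paper's argument has the minor advantage of yielding the sharper constant $2^{\alpha+1}-1$ when $\alpha>0$, though the lemma as stated does not require this.
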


\begin{proof}
If $\alpha\in (-1,0]$ the claim is obvious since $\phi$ is concave.

Assume $\alpha>0$, fix  $r>0$, and define
\begin{eqnarray}
f(x):=\frac{\phi(x+2r)-\phi(x+r)}{\phi(x+r)-\phi(x)}.\nonumber
\end{eqnarray}
We show that $f'(x)\le 0$ for $x>0$ so that $f(x)\le
f(0)=2^{\alpha+1}-1$; note that $f(0)$ does not depend on $r$. A
simple calculation shows
\begin{eqnarray}
f'(x)=r(\alpha+1)\cdot \frac{2(x+2r)^{\alpha}x^{\alpha}-(x+2r)^{\alpha}(x+r)^{\alpha}-(x+r)^{\alpha}x^{\alpha}}
{((x+r)^{\alpha+1}-x^{\alpha+1})^2}.\label{2010.03.09.1}
\end{eqnarray}
The numerator in (\ref{2010.03.09.1}) is
\begin{eqnarray}
2\cdot x^{\alpha}(x+r)^{\alpha}(x+2r)^{\alpha}
\cdot\left[(x+r)^{-\alpha}-\frac{x^{-\alpha}+(x+2r)^{-\alpha}}{2}\right].\label{2010.03.09.2}
\end{eqnarray}
Since the function $x^{-\alpha}$ is convex and $x+r$ is the
midpoint of $x$ and $x+2r$, the square bracket in
(\ref{2010.03.09.2}) is non-positive and so is $f'(x)$. The lemma
is proved.
\end{proof}

\begin{theorem}\label{HL}
$($Hardy-Littlewood$)$ Let $p\in (1,\infty)$. Then for $f\in
L_p(\Omega,\mu;\mathbb{R}^{d_1})$  and $h\in L_p(\bR^d_+,\nu,\bR^{d_1})$ we have
\begin{eqnarray}
\|\mathbb{M}f\|_{L_p(\Omega,\mu;\mathbb{R}^{d_1})}\le N
\|f\|_{L_p(\Omega,\mu;\mathbb{R}^{d_1})}, \quad  \|\mathbb{M}h\|_{L_p(\bR^d_+,\nu;\mathbb{R}^{d_1})}\le N
\|h\|_{L_p(\bR^d_+,\nu;\mathbb{R}^{d_1})}.  \nonumber
\end{eqnarray}
\end{theorem}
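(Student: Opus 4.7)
The plan is to reduce Theorem \ref{HL} to the dyadic Hardy--Littlewood inequality of Theorem \ref{2010.03.05.2}. I treat the parabolic case; the elliptic case is identical after suppressing the time variable, and the vector-valued estimate reduces to the scalar one component by component. The strategy is to dominate $\mathbb{M}$ pointwise by a finite sum of dyadic maximal operators $\cM^{(\sigma)}$ built from \emph{shifted} dyadic grids, each of which obeys the same $L_p$-bound as $\cM$ by the very proof of Theorem \ref{2010.03.05.2}.

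First, I would introduce a finite family of shifted partitions $\mathbb{C}_n^{(\sigma)}$ indexed by $\sigma$, obtained from $\mathbb{C}_n$ by translating cells by a fraction (say $\tfrac13$ or $\tfrac12$) of the cell length in the time direction, in each tangential direction $x^2,\ldots,x^d$, and when possible in $x^1$; whenever an $x^1$-shift produces a cell that would straddle $\{x^1=0\}$, I simply truncate to a half-cell of the form $[0,s)$. The crucial observation is that Lemma \ref{filtration1} and Lemma \ref{filtration2}, and hence Theorem \ref{2010.03.05.2}, go through verbatim for each $\mathbb{C}_n^{(\sigma)}$: the parent-to-child weighted volume ratio is still bounded by a constant $N=N(\alpha,d)$ by exactly the mean-value argument used in the proof of Lemma \ref{filtration2}, and the shifted $x^1$-grid (as well as the truncated boundary cell) is handled uniformly by Lemma \ref{key lemma}. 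Consequently $\|\cM^{(\sigma)}f\|_{L_p(\Omega,\mu)}\le N\|f\|_{L_p(\Omega,\mu)}$ with $N$ uniform in $\sigma$.

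Next, I would establish the pointwise majorization
\begin{equation*}
\mathbb{M}f(t_0,x_0)\le N\sum_\sigma\cM^{(\sigma)}f(t_0,x_0),\qquad (t_0,x_0)\in\Omega,
\end{equation*}
by a covering of $\tfrac13$-trick flavor. Given any $Q=Q_r(t,x)\in\mathbb{Q}$ containing $(t_0,x_0)$, pick $n\in\mathbb{Z}$ with $2^{-n}$ of the same order as $r$, so that a level-$n$ cell has time-length $\sim r^2$ and spatial widths $\sim r$, and then choose a shift $\sigma$ so that $Q$ lies inside the unique cell $C^{(\sigma)}_n(t_0,x_0)\in\mathbb{C}^{(\sigma)}_n$ containing $(t_0,x_0)$. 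For $x_0^1\ge r$ the required $x^1$-offset is supplied by the half-shift grid; for $x_0^1<r$ the unshifted grid already works, since then $Q\cap\bR^d_+$ sits inside a boundary cell of the form $[0,2^{-n})\times(\cdots)$. In each case Lemma \ref{key lemma} yields the comparison $|C^{(\sigma)}_n(t_0,x_0)|\le N(\alpha,d)|Q|$, whence $\aint_Q|f|\,d\mu\le N\cM^{(\sigma)}f(t_0,x_0)$. Taking $L_p$-norms and summing over the finitely many $\sigma$ delivers $\|\mathbb{M}f\|_{L_p(\Omega,\mu)}\le N\|f\|_{L_p(\Omega,\mu)}$; the version on $\bR^d_+$ is identical with $\mathbb{D}_n$ in place of $\mathbb{C}_n$.

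The main obstacle is the boundary $\{x^1=0\}$: one cannot freely shift the dyadic grid in the $x^1$-direction because the weighted measure is singular there, and two adjacent dyadic cells have weighted $x^1$-lengths whose ratio depends on their distance to the boundary. Lemma \ref{key lemma} is precisely the tool that keeps this ratio uniformly bounded, at most $2^{\alpha+1}$, for every $x\ge0$ and $r>0$; without it, the boundary cells would escape control and the covering step above would fail.
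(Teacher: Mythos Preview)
Your approach is genuinely different from the paper's: rather than the direct Vitali covering / weak-$(1,1)$ argument the paper runs, you try to dominate $\mathbb{M}$ pointwise by a finite sum of \emph{dyadic} maximal operators built from shifted grids and then invoke Theorem \ref{2010.03.05.2}.

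There is, however, a real gap. For Theorem \ref{2010.03.05.2} to apply to a grid $\{\mathbb{C}^{(\sigma)}_n\}$, that grid must be a genuine \emph{filtration of partitions}: each level-$n$ cell sits inside a unique level-$(n-1)$ cell, and $\inf_{C\in\mathbb{C}^{(\sigma)}_n}|C|\to\infty$ as $n\to-\infty$ (Lemmas \ref{filtration1}--\ref{filtration2}). But the ``$\tfrac13$-trick'' grids you describe---obtained by translating each level by a fraction of its own cell length---are \emph{not} nested across scales: if at level $n$ the $x^1$-intervals are $[(k+\tfrac13)2^{-n},(k+\tfrac43)2^{-n})$, then at level $n-1$ they are $[(2k+\tfrac23)2^{-n},(2k+\tfrac83)2^{-n})$, and for instance the level-$n$ cell $[\tfrac13\,2^{-n},\tfrac43\,2^{-n})$ lies in no single level-$(n-1)$ cell. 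On $\bR^d$ one rescues this by \emph{translation invariance} of Lebesgue measure (the maximal theorem on a translated grid follows from the untranslated one), but on $\bR^d_+$ with the weight $(x^1)^\alpha$ there is no such invariance, so you would need each shifted family to be a filtration in its own right. Your boundary truncation ``$[0,s)$'' does not repair this; and a grid obtained by translating the full dyadic system by a \emph{fixed} constant and then truncating at $0$ fails Lemma \ref{filtration1}: the truncated boundary cell stabilizes at a fixed size for all coarse levels. Lemma \ref{key lemma}, which you invoke, controls weighted measure ratios of adjacent intervals---it says nothing about nestedness.

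The paper sidesteps all of this. It proves the weak-type $(1,1)$ estimate directly by Vitali covering: for each point with $\mathbb{M}g>\lambda$ pick $Q\in\mathbb{Q}$ with $\aint_Q g>\lambda$, extract a disjoint subfamily $\{\tilde Q_j\}$ whose enlargements $\tilde Q_j^*:=3\tilde Q_j\cap\Omega$ cover any compact $K\subset\{\mathbb{M}g>\lambda\}$, and the only analytic input needed is the doubling bound $|\tilde Q_j^*|\le N(\alpha,d)\,|\tilde Q_j|$. It is here---not in any filtration check---that Lemma \ref{key lemma} enters: it gives
\[
\frac{\phi(x+3r)-\phi((x-3r)\vee 0)}{\phi(x+r)-\phi(x-r)}\le 2+2^{\alpha+1},\qquad \phi(s)=s^{\alpha+1},
\]
whence $|K|\le N\lambda^{-1}\int gI_{\{\mathbb{M}g>\lambda\}}\,d\mu$. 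Interpolation with the trivial $L_\infty$ bound then yields the $L_p$ estimate. If you want to salvage your route, the cleanest option is to recognize that Lemma \ref{key lemma} is precisely the doubling property of $\mu$, and then appeal to the standard Vitali argument for doubling measures---but that is the paper's proof.
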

\begin{proof} Again we only proof the first assertion. We follow the outline for the proof of Theorem 3.3.2
which does not involve a weight in the norm. Without loss of generality we assume $d_1=1$ and $g:=f\ge 0$.

For $\lambda>0$, denote $A(\lambda):=\{(t,x) : \mathbb{M}g(t,x)>\lambda\}$. Then since $\mathbb{M}g$ is  lower semi-continuous, $A(\lambda)$ is open.
To prove the theorem it is enough to show that for any $\lambda>0$ and compact set $K\subset A(\lambda)$
\begin{eqnarray}
|K|\le
\frac{N}{\lambda}\int_{\Omega}I_{A(\lambda)}(t,x)g(t,x)\mu(dtdx),\nonumber
\end{eqnarray}
where $N=N(\theta,p,d)$. For the details see the proof of Theorem 3.3.2 of \cite{kr08}.

 For any $(t,x)\in K$ there exists $Q$ containing
$(t,x)$ such that $\int_Q g d\mu > \lambda |Q|$. Also, we observe
that $Q\subset A(\lambda)$ and there exists a finite cover $\{Q_1,\ldots,Q_n\}$ of $K$
 such that
\begin{eqnarray}
\int_{Q_i} g d\mu>\lambda |Q_i|.\nonumber
\end{eqnarray}
For $Q=(t-\frac12 r^2,t+\frac12 r^2)\times(x^1-r,x^1+r)\times B'_r(x') \in \mathbb{Q}$, denote
$$3Q:=\big(t-\frac32 r^2,t+\frac32 r^2\big)\times(x^1-3r,x^1+3r)\times B'_{3r}(x').$$
When $Q$
is close to the boundary of $\Omega$, $3Q$ may not be in $\Omega$.
Hence, we define
\begin{eqnarray}
Q^*=3Q \cap \Omega.\nonumber
\end{eqnarray}
Using a Vitali covering argument  one can find the disjoint subset $\{\tilde Q_1,\ldots,\tilde Q_k\}$ of $
\{Q_1,\ldots, Q_n\}$ satisfying $K\subset \bigcup^k_{j=1}\tilde{Q_j}^*$ (see the proof of Theorem 3.3.2 of \cite{kr08}). To measure $|K|$ we compute the ratio $\frac{|\tilde{Q_j}^*|}{|\tilde{Q_j}|}$.
For $Q_j=(t-\frac{r^2}{2},t+\frac{r^2}{2})\times (x^1-r,x^1+r)\times B'_r(x')$ we have
\begin{eqnarray}
\frac{|\tilde{Q_j}^*|}{|\tilde{Q_j}|}=3^d\cdot\frac{\phi(x+3r)-\phi((x-3r)\vee
0)}{\phi(x+r)-\phi(x-r)},\nonumber
\end{eqnarray}
where $\phi(x)=x^{\theta-d+p+1}$ and $a\vee b:=\max\{a,b\}$. We note
\begin{eqnarray}
&&\frac{\phi(x+3r)-\phi((x-3r)\vee
0)}{\phi(x+r)-\phi(x-r)}\nonumber\\
&=&\frac{\phi(x-r)-\phi((x-3r)\vee
0)+\phi(x+r)-\phi(x-r)+\phi(x+3r)-\phi(x+r)}{\phi(x+r)-\phi(x-r)}\nonumber\\
&\le&
2+\frac{\phi(x+3r)-\phi(x+r)}{\phi(x+r)-\phi(x-r)},\label{2010.03.09.4}
\end{eqnarray}
where the last inequality is true since $\phi$ is increasing and
convex.  Now, Lemma \ref{key lemma} with
$x-r$, $2r$ instead of $x$, $r$ implies (\ref{2010.03.09.4}) is less
than or equal to $2+2^{\alpha+1}$. Hence, we have
\begin{eqnarray}
\frac{|\tilde{Q_j}^*|}{|\tilde{Q_j}|}\le
3^d\cdot(2+2^{\alpha+1}),\quad |\tilde{Q_j}^*|\le
3^d\cdot(2+2^{\alpha+1})|\tilde{Q_j}|.\nonumber
\end{eqnarray}
Thus,
\begin{eqnarray}
|K|&\le& \sum^k_{j=1}|\tilde{Q_j}^*|\le
3^d\cdot(2+2^{\alpha+1})\sum^k_{j=1}|\tilde{Q_j}|\nonumber\\
&\le&
3^d\cdot(2+2^{\alpha+1})\lambda^{-1}\sum^k_{j=1}\int_{\tilde{Q_j}}g\,d\mu\le
3^d\cdot(2+2^{\alpha+1})\lambda^{-1}\int_{\Omega}gI_{A(\lambda)}\,d\mu.\nonumber
\end{eqnarray}
The theorem is proved.
\end{proof}


\mysection{A weighted $L_p$-theory for  systems in a half space}
                                                    \label{main result}

Let $C^{\infty}_0(\mathbb{R}^d;\mathbb{R}^{d_1})$ denote
the set of all $\mathbb{R}^{d_1}$-valued infinitely differentiable
functions with compact support in $\mathbb{R}^d$. By $\mathcal{D}$ we denote  the
space of $d$-dimensional distributions on $C^{\infty}_0(\mathbb{R}^d;\mathbb{R}^{d_1})$;
precisely, for $u\in \mathcal{D}$ and $\phi\in C^{\infty}_0(\mathbb{R}^d;\mathbb{R}^{d_1})$ we
define $(u,\phi)\in \mathbb{R}^{d_1}$ with components
$(u,\phi)^k=(u^k,\phi^k)$, $k=1,\ldots,d_1$; each $u^k$ is a usual
scalar-valued distribution.

For $p\in (1,\infty)$ we define $L_p=L_p(\mathbb{R}^d;\mathbb{R}^{d_1})$ as the space of all
$\mathbb{R}^{d_1}$-valued functions $u=(u^1,\ldots,u^{d_1})$ satisfying
\[
\|u\|^p_{L_p}:=\sum^{d_1}_{k=1}\|u^k\|^p_{L_p}<\infty.
\]
Denote $x=(x^1,\ldots,x^d)$.
In this paper we define
\[
\|u_x\|^p_{L_p}=\sum_{i=1}^d \|u_{x^i}\|^p_{L_p},\quad
\|u_{xx}\|^p_{L_p}=\sum_{i,j=1}^d \|u_{x^ix^j}\|^p_{L_p},\quad\textrm{etc.}
\]

For any $\gamma\in \bR$,  define the
space of Bessel potential
$H^{\gamma}_p=H^{\gamma}_p(\mathbb{R};\mathbb{R}^{d_1})$ as the space
of all distributions $u$ on $\bR^d$ such that $(1-\Delta)^{\gamma/2}u\in L_p$, where
 each component is defined by
\[
((1-\Delta)^{\gamma/2}u)^k=(1-\Delta)^{\ga/2}u^k
\]
and the norm is given by
\[
\|u\|_{H^{\gamma}_p}:=\|(1-\Delta)^{\ga/2}u\|_{L_p}.
\]
Then $H^{\gamma}_p$ is a Banach space with the given norm and
$C^{\infty}_0(\mathbb{R}^d;\mathbb{R}^{d_1})$ is dense in $H^{\gamma}_p$ (see \cite{T}). Note that $H^{\ga}_p$ are
usual Sobolev spaces for $\ga=0,1,2,\ldots$. It is well known that
the first order differentiation operator, $D:H^{\gamma}_{p}\to
H^{\gamma-1}_p$, is bounded. On the other hand, if $\text{supp}\,
(u) \subset (a,b)$, where $-\infty<a<b<\infty$, then
\begin{equation}
                                        \label{eqn 5.1.1}
\|u\|_{H^{\gamma}_p}\leq c(d,a,b)\|u_x\|_{H^{\gamma-1}_p}
\end{equation}
(see, for instance, Remark 1.13 in \cite{kr99}).

Now we introduce the weighted Sobolev spaces taken from
 \cite{kr99} and
\cite{Lo2}. Take a nonnegative real-valued function $\zeta(x)=\zeta(x^1)\in
C^{\infty}_0(\bR_+)$ such that
\begin{equation}
                                       \label{eqn 5.6.5}
\sum_{n=-\infty}^{\infty}\zeta(e^{n+s})>c>0, \quad \forall s\in \bR,
\end{equation}
where $c$ is a constant. Note that  any nonnegative function $\zeta$
with $\zeta>0$ on $[1,e]$ satisfies (\ref{eqn 5.6.5}). For
$\theta\in \bR$,  let
$H^{\gamma}_{p,\theta}:=H^{\gamma}_{p,\theta}(\mathbb{R}^d_+;\mathbb{R}^{d_1})$
denote the set of all $d$-dimensional distributions $u=(u^1,u^2,\cdots u^{d_1})$  on $\bR^d_+$
such that
\begin{equation}
                  \label{def weight}
\|u\|^p_{H^{\gamma}_{p,\theta}}:= \sum_{n\in\bZ} e^{n\theta}
\|\zeta(\cdot) u(e^{n} \cdot)\|^p_{H^{\gamma}_p}<\infty.
\end{equation}
It is known that for different $\zeta$ satisfying (\ref{eqn 5.6.5}),
we get the same spaces $H^{\gamma}_{p,\theta}$ with equivalent
norms, and for any $\eta\in C^{\infty}_0(\bR_+;\bR)$,
\begin{equation}
                            \label{eqn 5.6.1}
\sum_{n=-\infty}^{\infty}
e^{n\theta}\|\eta(\cdot)u(e^n\cdot)\|^p_{H^{\ga}_p} \leq N
\sum_{n=-\infty}^{\infty}
e^{n\theta}\|\zeta(\cdot)u(e^n\cdot)\|^p_{H^{\ga}_p},
\end{equation}
where $N$ depends only on $\gamma,\theta,p,d,d_1,\eta,\zeta$.
Furthermore,
 if $\gamma$ is a nonnegative integer, then
\begin{eqnarray}
\|u\|^p_{H^{\gamma}_{p,\theta}}\sim \sum_{|\beta|\le\gamma}
\int_{\bR_+^d}|(x^1)^{|\beta|} D^{\beta}u(x)|^p(x^1)^{\theta-d} \,dx.
\label{non-negative
integer}
\end{eqnarray}

 Let $M^{\alpha}$ be the operator of
multiplying by $(x^1)^{\alpha}$ and  $M:=M^1$. For $\nu\in (0,1]$, denote
$$
|u|_{C}=\sup_{x\in\bR^d_+}|u(x)|, \quad [u]_{C^{\nu}}=\sup_{x\neq
y}\frac{|u(x)-u(y)|}{|x-y|^{\nu}}.
$$
Below we collect some other important properties of the spaces
$H^{\gamma}_{p,\theta}$.

\begin{lemma} $(\cite{kr99}, \cite{kr99-1})$
                  \label{lemma 1}
                  Let $\gamma, \theta\in \bR$ and $p\in (1,\infty)$.
\begin{itemize}
\item[(i)] $C^{\infty}_0(\mathbb{R}^d_+;\mathbb{R}^{d_1})$ is dense in $H^{\gamma}_{p,\theta}$.

\item[(ii)] Assume that $\gamma=m+\nu+d/p$ for some $m=0,1,\cdots$ and
$\nu\in (0,1]$.  Then for any $u\in H^{\gamma}_{p,\theta}$ $i\in
\{0,1,\cdots,m\}$, we have
\begin{equation}
                         \label{eqn 3.31.4}
|M^{i+\theta/p}D^iu|_{C}+[M^{m+\nu+\theta/p}D^m u]_{C^{\nu}}\leq N
\|u\|_{ H^{\gamma}_{p,\theta}}.
\end{equation}

\item[(iii)] Let $\alpha\in \bR$. Then
$M^{\alpha}H^{\gamma}_{p,\theta+\alpha p}=H^{\gamma}_{p,\theta}$ and
$$
\|u\|_{H^{\gamma}_{p,\theta}}\leq N
\|M^{-\alpha}u\|_{H^{\gamma}_{p,\theta+\alpha p}}\leq
N\|u\|_{H^{\gamma}_{p,\theta}}.
$$


\item[(iv)] For any $MD, DM: H^{\gamma}_{p,\theta}\to H^{\gamma-1}_{p,\theta}$ are
bounded linear operators, and
$$
\|u\|_{H^{\gamma}_{p,\theta}}\leq N\|u\|_{H^{\gamma-1}_{p,\theta}}+N
\|Mu_x\|_{H^{\gamma-1}_{p,\theta}}\leq N
\|u\|_{H^{\gamma}_{p,\theta}},
$$
$$
\|u\|_{H^{\gamma}_{p,\theta}}\leq N\|u\|_{H^{\gamma-1}_{p,\theta}}+N
\|(Mu)_x\|_{H^{\gamma-1}_{p,\theta}}\leq N
\|u\|_{H^{\gamma}_{p,\theta}}.
$$
Furthermore, if $\theta\neq d-1, d-1+p$, then
\begin{equation}
                        \label{eqn 4.25.3}
\|u\|_{H^{\gamma}_{p,\theta}}\leq N
\|Mu_x\|_{H^{\gamma-1}_{p,\theta}}, \quad  \|u\|_{H^{\gamma}_{p,\theta}}\leq N
\|(Mu)_x\|_{H^{\gamma-1}_{p,\theta}}.
\end{equation}

\item[(v)] For $i=0,1$\; let $\kappa\in [0,1],\; p_i\in (1,\infty),\; \gamma_i,\;\theta_i \in \bR$ and assume the relations
\[
\gamma=\kappa \gamma_1 +(1-\kappa)\gamma_0,\quad \frac1p=\frac{\kappa}{p_1}+ \frac{1-\kappa}{p_0},\quad
\frac{\theta}{p}=\frac{\theta_1\kappa}{p_1}+\frac{\theta_0(1-\kappa)}{p_0}.
\]
Then
$$
\|u\|_{H^{\gamma}_{p,\theta}}\leq N\|u\|^{\kappa}_{H^{\gamma_1}_{p_1,\theta_1}}\|u\|^{1-\kappa}_{H^{\gamma_0}_{p_0,\theta_0}}.
$$

\end{itemize}
\end{lemma}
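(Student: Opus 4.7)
The unifying idea is to exploit the defining identity
\[
\|u\|^p_{H^{\gamma}_{p,\theta}} = \sum_{n\in\bZ} e^{n\theta} \|\zeta(\cdot)u(e^n\cdot)\|^p_{H^{\gamma}_p}
\]
and reduce every claim to a uniform-in-$n$ statement about the rescaled localizations $v_n:=\zeta(\cdot)u(e^n\cdot)$, each of which is supported in the fixed compact set $\mathrm{supp}\,\zeta\subset\bR_+$. On such functions the classical theory of $H^{\gamma}_p(\bR^d;\bR^{d_1})$ applies uniformly, and the weight $e^{n\theta}$ is then reattached by summing over $n$ and, when needed, invoking H\"older's inequality together with the freedom \eqref{eqn 5.6.1} to replace $\zeta$ by any other admissible cutoff.

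With this template in place, items (i), (ii), (iii), and (v) are essentially transcriptions of the whole-space theory. For (i), use density of $C^{\infty}_0(\bR^d;\bR^{d_1})$ in $H^{\gamma}_p$ to approximate each $v_n$, truncate the $n$-series, and undo the dilation to produce an element of $C^{\infty}_0(\bR^d_+;\bR^{d_1})$ converging to $u$. For (ii), apply the classical Sobolev embedding to $v_n$; evaluation at a point $x$ with $x^1\sim e^n$, combined with the chain-rule factor $e^{-ni}$ from the dilated derivatives and $e^{-n\theta/p}$ from the weight, reconstitutes the prefactor $M^{i+\theta/p}$ appearing in \eqref{eqn 3.31.4}. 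For (iii), substitute $v=M^{-\al}u$, observe $(M^{-\al}u)(e^nx)=e^{-n\al}(x^1)^{-\al}u(e^nx)$, and note that the resulting sum with weight $e^{n(\theta+\al p)-n\al p}=e^{n\theta}$ and cutoff $(x^1)^{-\al}\zeta(x)$ is equivalent to the original by \eqref{eqn 5.6.1}. For (v), apply the classical interpolation $\|v_n\|_{H^{\gamma}_p}\le \|v_n\|^{\ka}_{H^{\gamma_1}_{p_1}}\|v_n\|^{1-\ka}_{H^{\gamma_0}_{p_0}}$ termwise and sum by H\"older with conjugate exponents $p/(p_1\ka)$ and $p/(p_0(1-\ka))$; the relation $\theta/p=\theta_1\ka/p_1+\theta_0(1-\ka)/p_0$ is precisely what balances the weights $e^{n\theta}$ against $e^{n\theta_1\ka}\cdot e^{n\theta_0(1-\ka)}$.

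The substantive work is (iv). Boundedness of $MD,\,DM\colon H^{\gamma}_{p,\theta}\to H^{\gamma-1}_{p,\theta}$ and the first two chains of inequalities follow from the chain rule $D[u(e^n\cdot)]=e^n(Du)(e^n\cdot)$, which precisely cancels the extra factor produced by $M$, combined with boundedness of $D\colon H^{\gamma}_p\to H^{\gamma-1}_p$ and \eqref{eqn 5.1.1} applied to $v_n$ (whose support lies in a fixed interval $[a,b]\subset\bR_+$). The main obstacle, and the step I expect to be hardest, is the improvement \eqref{eqn 4.25.3}, where the lower-order term $\|u\|_{H^{\gamma-1}_{p,\theta}}$ must be absorbed when $\theta\neq d-1,\,d-1+p$. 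The natural route is to write $u$ as an antiderivative of $u_{x^1}$ along the $x^1$-direction, integrating either from $0$ or from $\infty$ according to the sign of $\theta-(d-1)$, and to invoke a weighted one-dimensional Hardy inequality whose validity imposes precisely the two exponent conditions $\theta-d\neq -1$ and (after applying the same argument to $Mu$ in place of $u$) $\theta-d+p\neq -1$. Recast in the dyadic-logarithmic decomposition, this amounts to showing that a convolution operator on $\bZ$ with geometric kernel is bounded on $\ell^p(e^{n\theta})$, an estimate which degenerates exactly at the two excluded values of $\theta$. The delicate part is arranging the boundary term at $x^1=0$ so that the integration-by-parts and Hardy steps go through on smooth compactly supported functions (granted by (i)) and then extending by density.
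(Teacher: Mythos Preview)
The paper does not prove this lemma; it is stated with attribution to \cite{kr99} and \cite{kr99-1} and used as a black box. Your outline is essentially the approach of those references: the dyadic-dilation definition \eqref{def weight} together with the freedom \eqref{eqn 5.6.1} to change cutoffs reduces every assertion to a uniform-in-$n$ estimate on functions supported in a fixed compact subset of $\bR_+$, and the sharp inequality \eqref{eqn 4.25.3} does indeed come down to a one-dimensional weighted Hardy inequality (equivalently, in the logarithmic picture, boundedness of a geometric convolution on $\ell^p$ with exponential weights).

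One small slip worth flagging: your derivation of the second excluded value is off. You write that applying the Hardy argument to $Mu$ in place of $u$ yields the condition $\theta-d+p\neq -1$, i.e.\ $\theta\neq d-1-p$, whereas the lemma excludes $\theta=d-1+p$. The correct reduction is to set $v=Mu$ and use (iii) to rewrite
\[
\|u\|_{H^{\gamma}_{p,\theta}}\sim\|v\|_{H^{\gamma}_{p,\theta-p}},\qquad
\|(Mu)_x\|_{H^{\gamma-1}_{p,\theta}}\sim\|Mv_x\|_{H^{\gamma-1}_{p,\theta-p}},
\]
and then invoke the first Hardy inequality at the shifted parameter $\theta-p$; this gives $\theta-p\neq d-1$, i.e.\ $\theta\neq d-1+p$. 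This is a bookkeeping error, not a structural gap in your strategy.
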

\begin{remark}
                \label{remark last}
 Let  $\theta\in  (d-1, d-1+p)$ and $n$ be a nonnegative integer. By Lemma \ref{lemma 1} $(iii), (iv)$
\begin{equation}
\|M^{-n}v\|_{H^{\gamma}_{p,\theta}}\le N \|D^n v\|_{H^{\gamma-n}_{p,\theta}}\label{2011.03.21.1}
\end{equation}
 for any $v\in C^{\infty}_0(\mathbb{R}^d;\mathbb{R}^{d_1})$. Indeed, since $\theta+mp\ne d-1,d-1+p$ for any integer $m$
\begin{eqnarray}
\|M^{-n}v\|_{H^{\gamma}_{p,\theta}}&\le& N \|M^{-1}v\|_{H^{\gamma}_{p,\theta-(n-1)p}}\le N \|v_x\|_{H^{\gamma-1}_{p,\theta-(n-1)p}}\nonumber\\
&\le& N\|M^{-1}v_x\|_{H^{\gamma-1}_{p,\theta-(n-2)p}}\le N \|D^2v\|_{H^{\gamma-2}_{p,\theta-(n-2)p}}\ldots.\nonumber
\end{eqnarray}
\end{remark}

\vspace{3mm}

For $-\infty \leq S<T\leq \infty$, we define the Banach spaces:
$$
\bH^{\gamma}_{p,\theta}(S,T):=L_p((S,T),H^{\gamma}_{p,\theta}), \; \bH^{\gamma}_{p,\theta}(T):=\bH^{\gamma}_{p,\theta}(0,T),\;
\bL_{p,\theta}(S,T):=H^0_{p,\theta}(S,T),\; \bL^{\gamma}_{p,\theta}(T):=\bL^{\gamma}_{p,\theta}(0,T)
$$
with norms given by
\[
\|u\|^p_{\bH^{\ga}_{p,\theta}(S,T)}=\int^{T}_S\|u(t)\|^p_{H^{\gamma}_{p,\theta}}dt.
\]

\begin{lemma}
               \label{duality}
For $\phi, \psi \in C^{\infty}_0((S,T)\times \bR^d_+)$, define $(\phi,\psi)=\int_{S}^T\int_{\bR^d_+}\phi(s,x)\psi(t,x)dtdx$.
For $p\in (1,\infty)$ and $\gamma,\theta\in\bR$, define $\gamma',p',\theta'$ so that
\[
\gamma'=-\gamma,\quad \frac1p+\frac{1}{p'}=1,\quad \frac{\theta}{p}+\frac{\theta'}{p'}=d.
\]
Then for any $\phi\in C^{\infty}_0((S,T)\times \bR^d_+)$
\begin{eqnarray}
\|\phi\|_{\bH^{\gamma}_{p,\theta}(S,T)}\le N\;\sup_{\psi\in C^{\infty}_0((S,T)\times \bR^d_+)}\frac{(\phi,\psi)}{\|\psi\|_{\bH^{\gamma'}_{p',\theta'}(S,T)}}
\le N \|\phi\|_{\bH^{\gamma}_{p,\theta}(S,T)},\nonumber
\end{eqnarray}
where $N$ is independent of $\phi$. Moreover the relation $(\phi,\psi)$ can be extended  by continuity on all $\phi\in \bH^{\gamma}_{p,\theta}(S,T)$ and $\psi\in \bH^{\gamma'}_{p',\theta'}(S,T)$, and then
it identifies the dual to $\bH^{\gamma}_{p,\theta}(S,T)$ with $\bH^{\gamma'}_{p',\theta'}(S,T)$.
\end{lemma}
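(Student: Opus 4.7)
The plan is to prove the two-sided estimate by exploiting the algebraic identity $e^{nd}=e^{n\theta/p}\cdot e^{n\theta'/p'}$ (equivalent to the hypothesis $\theta/p+\theta'/p'=d$), which splits the Jacobian of the dyadic rescaling $x=e^n y$ --- intrinsic to the definition of the weighted norms --- multiplicatively between the weights $e^{n\theta}$ and $e^{n\theta'}$. This compatibility is precisely what makes the unweighted $L_2$-pairing $(\phi,\psi)=\int\phi\psi\,dx\,ds$ the correct one. Once the two-sided bound is in hand, the ``moreover'' clause --- extension by continuity of $(\phi,\psi)$ to the full spaces and identification of $\bH^{\gamma'}_{p',\theta'}(S,T)$ with $(\bH^\gamma_{p,\theta}(S,T))^*$ --- follows from density of $C^\infty_0$ (Lemma \ref{lemma 1}(i)) together with a Hahn--Banach/reflexivity argument using that swapping the roles $(\gamma,p,\theta)\leftrightarrow(\gamma',p',\theta')$ leaves the hypothesis invariant.

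For the right-hand inequality I would fix a bump function $\bar\zeta\in C^\infty_0(\bR_+)$ satisfying (\ref{eqn 5.6.5}) whose rescales $\bar\zeta(e^{-n}\cdot)$ have only $O(1)$-many overlapping supports. A dyadic decomposition together with the change of variables $y=e^{-n}x$ gives
\begin{equation*}
(\phi,\psi)\;\sim\;\sum_n e^{nd}\int_S^T \bigl(\bar\zeta(\cdot)\phi(s,e^n\cdot),\;\bar\zeta(\cdot)\psi(s,e^n\cdot)\bigr)_{L_2(\bR^d)}\,ds.
\end{equation*}
On each scale the standard unweighted duality $(H^\gamma_p(\bR^d))^*=H^{-\gamma}_{p'}(\bR^d)$ bounds the inner product by $\|\bar\zeta(\cdot)\phi(s,e^n\cdot)\|_{H^\gamma_p}\cdot\|\bar\zeta(\cdot)\psi(s,e^n\cdot)\|_{H^{-\gamma}_{p'}}$. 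Applying H\"older in $s$ and then in $n$, together with $e^{nd}=e^{n\theta/p}e^{n\theta'/p'}$ to redistribute the Jacobian, yields $|(\phi,\psi)|\le N\|\phi\|_{\bH^\gamma_{p,\theta}(S,T)}\|\psi\|_{\bH^{\gamma'}_{p',\theta'}(S,T)}$, with equivalence (\ref{eqn 5.6.1}) absorbing the switch from $\zeta$ to $\bar\zeta$; dividing and taking sup gives the right-hand bound.

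For the left-hand inequality, given $\phi\in C^\infty_0$ set $f_n(s,y):=(1-\Delta)^{\gamma/2}[\bar\zeta(\cdot)\phi(s,e^n\cdot)](y)\in L_p(\bR^d)$, so that $R^p:=\sum_n e^{n\theta}\int_S^T\|f_n(s,\cdot)\|_{L_p}^p\,ds\sim\|\phi\|^p_{\bH^\gamma_{p,\theta}(S,T)}$. Define the pointwise dual density $g_n:=|f_n|^{p-2}f_n\in L_{p'}$ (so $(f_n,g_n)_{L_2}=\|f_n\|_{L_p}^p$ and $\|g_n\|_{L_{p'}}=\|f_n\|_{L_p}^{p-1}$), let $h_n:=(1-\Delta)^{\gamma/2}g_n\in H^{\gamma'}_{p'}(\bR^d)$, and build a candidate
\begin{equation*}
\psi(s,x)\;:=\;\sum_n e^{n(\theta-d)}\,\bar\zeta(e^{-n}x^1)\,h_n(s,e^{-n}x).
\end{equation*}
The same change of variables and the self-adjointness of $(1-\Delta)^{\gamma/2}$ against $g_n$ give $(\phi,\psi)=\sum_n e^{n\theta}\int_S^T\|f_n(s,\cdot)\|_{L_p}^p\,ds=R^p$. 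Running the upper-bound estimate in reverse, scale-by-scale (noting that for each fixed $m$ only $O(1)$-many $n$ contribute to $\zeta(\cdot)\psi(s,e^m\cdot)$), produces $\|\psi\|_{\bH^{\gamma'}_{p',\theta'}}\le N R^{p-1}$; mollifying $\psi$ --- justified by the density of $C^\infty_0$ in $\bH^{\gamma'}_{p',\theta'}$ --- yields an honest $C^\infty_0$ test function achieving the required ratio.

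The main obstacle will be the \textbf{gluing step} in constructing $\psi$: since $(1-\Delta)^{\gamma/2}$ is nonlocal, $h_n$ is not compactly supported even though $g_n$ is, so the factor $\bar\zeta(e^{-n}\cdot)$ localizes the $n$-th summand but does not a priori preserve its $H^{\gamma'}_{p'}$-norm uniformly in $n$. Controlling this requires careful use of the scaling behaviour of $(1-\Delta)^{\gamma/2}$ against a compactly supported multiplier, together with the finite-overlap property of $\{\bar\zeta(e^{-n}\cdot)\}_{n\in\bZ}$. Once these estimates are in place, all remaining steps reduce to H\"older's inequality, the equivalence (\ref{eqn 5.6.1}), and the standard unweighted duality $(H^\gamma_p)^*=H^{-\gamma}_{p'}$, so the crux of the argument is concentrated in this one technical point.
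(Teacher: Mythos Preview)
Your sketch is correct; the paper itself does not prove this lemma but simply invokes Theorem~2.5 of \cite{kr99-1} (the stationary duality $(H^\gamma_{p,\theta})^*=H^{\gamma'}_{p',\theta'}$) and remarks that adding the time integration changes nothing essential. Your dyadic decomposition, the splitting $e^{nd}=e^{n\theta/p}e^{n\theta'/p'}$, and the explicit construction of a near-extremal $\psi$ are precisely the mechanism of that cited proof, so you are in effect reconstructing the reference rather than giving a different argument.

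The obstacle you flag is not a genuine difficulty. When you expand $\zeta(\cdot)\psi(s,e^m\cdot)$, the finite-overlap property forces $|m-n|\le C$ in the surviving terms, so the multiplier $\zeta(y)\bar\zeta(e^{m-n}y^1)$ ranges over a \emph{finite} set of fixed $C^\infty_0$-functions, each of which acts boundedly on $H^{\gamma'}_{p'}(\bR^d)$; combined with the harmless bounded rescaling $y\mapsto e^{m-n}y$ this gives $\|\zeta(\cdot)\psi(s,e^m\cdot)\|_{H^{\gamma'}_{p'}}\le N\sum_{|n-m|\le C}e^{n(\theta-d)}\|g_n(s,\cdot)\|_{L_{p'}}$, and the exponent bookkeeping closes via the identity $\theta'+(\theta-d)p'=\theta$. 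A shorter route to the full dual identification, once you accept the stationary result from \cite{kr99-1}, is simply to invoke the Bochner-space duality $(L_p((S,T),X))^*=L_{p'}((S,T),X^*)$ for reflexive $X$; this is presumably what the paper means by ``the only difference is that one has to consider integrations on the time variable, too.''
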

\begin{proof}
See Theorem 2.5 of \cite{kr99-1}; this actually proves the duality between $H^{\gamma}_{p,\theta}$ and $H^{\gamma'}_{p',\theta'}$,
but the proof of our claim is essentially the same. The only difference is that one has to consider integrations on the time variable, too.
\end{proof}

Finally, we set
$U^{\gamma}_{p,\theta}:=M^{1-2/p}H^{\gamma-2/p}_{p,\theta}$, meaning
that any $u \in U^{\gamma}_{p,\theta}$ has the form
$u=M^{1-2/p}\cdot v$ with $v \in H^{\gamma-2/p}_{p,\theta}$ and
$\|u\|_{U^{\gamma}_{p,\theta}}:=\|M^{-1+2/p}u\|_{H^{\gamma-2/p}_{p,\theta}}=\|v\|_{H^{\gamma-2/p}_{p,\theta}}$.
Using these spaces, we define our solution spaces.

\begin{definition}\label{md}
We write $u\in \frH^{\gamma+2}_{p,\theta}(S,T)$ if $u\in
M\bH^{\gamma+2}_{p,\theta}(S,T)$, $u(S,\cdot)\in U^{\gamma+2}_{p,\theta}$ ($u(-\infty,\cdot):=0$ if $S=-\infty$),
and for some $\tilde f\in M^{-1}\bH^{\gamma}_{p,\theta}(T)$  it holds $u_t=\tilde{f}$ in the sense of distributions, that is
 for any $\phi\in C^{\infty}_0(\mathbb{R}^d;\mathbb{R}^{d_1})$ the equality
\begin{equation}\label{e}
(u(t,\cdot),\phi)= (u(S,\cdot),\phi)+ \int^t_S( \tilde
f(s,\cdot),\phi)ds
\end{equation}
holds for all $t\in (S,T)$. In this case we write $u_t=\tilde{f}$.
The norm in $\frH^{\gamma+2}_{p,\theta}(S,T)$ is defined by
$$
\|u\|_{\frH^{\gamma+2}_{p,\theta}(S,T)}=\|M^{-1}u\|_{\bH^{\gamma+2}_{p,\theta}(S,T)}+\|Mu_t\|_{\bH^{\gamma}_{p,\theta}(S,T)} +\|u(S,\cdot)\|_{U^{\gamma+2}_{p,\theta}}.
$$
Define $\frH^{\gamma+2}_{p,\theta}(T):=\frH^{\gamma+2}_{p,\theta}(0,T)$ and $\frH^{\gamma+2}_{p,\theta}:=\frH^{\gamma+2}_{p,\theta}(0,\infty)$.
\end{definition}

\begin{theorem}
                      \label{banach}
(i) The space $\frH^{\gamma+2}_{p,\theta}(S,T)$ is a Banach space.

(ii) Let $0<T<\infty$. Then   for any $u\in \frH^{\gamma+2}_{p,\theta}(T)$,
$$
\sup_{t\leq T}\|u(t)\|_{\bH^{\gamma+1}_{p,\theta}}\leq N(d,p,\theta,T)\|u\|_{\frH^{\gamma+2}_{p,\theta}(T)}.
$$

(iii) Let $0<T<\infty$. For any nonnegative integer $n \geq \gamma+2$, the set
$$
\frH^{\gamma+2}_{p,\theta}(T) \bigcap \bigcup_{k=1}^{\infty} C([0,T],C^n_0(G_k))
$$
where $G_k=(1/k,k)\times \{|x'|<k\}$ is dense in $\frH^{\gamma+2}_{p,\theta}(T)$.
\end{theorem}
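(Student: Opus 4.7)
Completeness is a routine Banach-space argument. Given a Cauchy sequence $\{u_n\}\subset\frH^{\gamma+2}_{p,\theta}(S,T)$, each of the three component sequences $\{M^{-1}u_n\}\subset\bH^{\gamma+2}_{p,\theta}(S,T)$, $\{Mu_{n,t}\}\subset\bH^{\gamma}_{p,\theta}(S,T)$, and $\{u_n(S,\cdot)\}\subset U^{\gamma+2}_{p,\theta}$ is Cauchy in a Banach space, hence converges to some $v$, $w$, $u_0$ respectively. Define $u:=Mv$ and the candidate time derivative $\tilde f:=M^{-1}w$; by Lemma \ref{lemma 1}(iii) these lie in $M\bH^{\gamma+2}_{p,\theta}(S,T)$ and $M^{-1}\bH^{\gamma}_{p,\theta}(S,T)$. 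To verify that $\tilde f$ really is $u_t$ in the distributional sense of (\ref{e}), I would pair (\ref{e}) written for $u_n$ against an arbitrary $\phi\in C^\infty_0(\bR^d;\bR^{d_1})$ and pass to the limit term-by-term; the pairings are bounded by the respective weighted norms via Lemma \ref{duality}, so the limit equation holds for all $t\in(S,T)$, giving $u\in\frH^{\gamma+2}_{p,\theta}(S,T)$ and $\|u_n-u\|_{\frH^{\gamma+2}_{p,\theta}(S,T)}\to 0$.

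\textbf{Plan for (iii).} The plan is a three-stage approximation. First, pick a smooth radial cutoff $\eta_k\in C^\infty_0(G_{2k})$ with $\eta_k\equiv 1$ on $G_k$ and $|(x^1)^{|\beta|}D^\beta\eta_k|\le N_\beta$, and show $\eta_k u\to u$ in $\frH^{\gamma+2}_{p,\theta}(T)$ using Lemma \ref{lemma 1} to estimate the three norm components (the key observation is that multiplication by $\eta_k$ preserves the weighted spaces with norm control and $(\eta_k u)_t=\eta_k u_t$). Second, mollify in $x$ by convolution with a standard $C^\infty_0$ mollifier of scale $\varepsilon\ll 1/k$ supported so small that the mollified function stays inside $G_{3k}$; convergence in the weighted norms follows from standard mollifier estimates on compact sets where the weight is bounded above and below. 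Third, if continuity in $t$ is not already present, mollify in $t$ using a one-sided mollifier that preserves (or at least respects) the initial condition in $U^{\gamma+2}_{p,\theta}$. Each step keeps the function in the class $C([0,T];C^n_0(G_k))$ for suitable $k$ and lies in $\frH^{\gamma+2}_{p,\theta}(T)$.

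\textbf{Plan for (ii).} I would prove (iii) first and then use it to reduce to $u\in C([0,T];C^n_0(G_k))$. For such smooth $u$, the core estimate comes from Lemma \ref{lemma 1}(v) applied with $\gamma_1=\gamma+2$, $\gamma_0=\gamma$, $p_0=p_1=p$, $\kappa=1/2$, $\theta_1=\theta-p$, $\theta_0=\theta+p$, giving the pointwise-in-$t$ inequality
\begin{equation*}
\|u(t)\|_{H^{\gamma+1}_{p,\theta}}\le N\,\|u(t)\|^{1/2}_{H^{\gamma+2}_{p,\theta-p}}\|u(t)\|^{1/2}_{H^{\gamma}_{p,\theta+p}}\le N\,\|M^{-1}u(t)\|^{1/2}_{H^{\gamma+2}_{p,\theta}}\|Mu(t)\|^{1/2}_{H^{\gamma}_{p,\theta}},
\end{equation*}
where the last inequality uses Lemma \ref{lemma 1}(iii). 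To control the second factor uniformly in $t$, apply the fundamental theorem of calculus to $Mu$: since $(Mu)_t=Mu_t$ and $Mu_t\in\bH^{\gamma}_{p,\theta}(T)$, Hölder in time gives $\sup_t\|Mu(t)\|_{H^{\gamma}_{p,\theta}}\le\|Mu(S)\|_{H^{\gamma}_{p,\theta}}+T^{1-1/p}\|Mu_t\|_{\bH^{\gamma}_{p,\theta}(T)}$, and the initial term is controlled by $\|u(S,\cdot)\|_{U^{\gamma+2}_{p,\theta}}$ through the identifications $U^{\gamma+2}_{p,\theta}=M^{1-2/p}H^{\gamma+2-2/p}_{p,\theta}$ and Lemma \ref{lemma 1}(iii)--(iv). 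For the first factor I would use Young's inequality to convert the product bound into an $L^p$-in-$t$ bound and then combine with the FTC on $Mu$ to close the estimate, absorbing terms appropriately.

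\textbf{Main obstacle.} The genuinely delicate step is part (ii): the two ``pieces'' of the $\frH^{\gamma+2}_{p,\theta}(T)$-norm live in weighted spaces that are shifted in opposite directions by powers of $M$, and one must match these shifts against the target weight $\theta$ in $H^{\gamma+1}_{p,\theta}$ while also converting the $L^p$-in-$t$ integrability of $\|M^{-1}u\|_{H^{\gamma+2}_{p,\theta}}$ into a genuine $\sup_t$ bound. The interpolation from Lemma \ref{lemma 1}(v) is exactly designed to balance these shifts at $\kappa=1/2$, but the argument then requires careful use of the FTC and Hölder in time, plus the trace-type identification of $\|Mu(S)\|_{H^\gamma_{p,\theta}}$ with a piece of $\|u(S,\cdot)\|_{U^{\gamma+2}_{p,\theta}}$, to actually close. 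A secondary bookkeeping issue is the preservation of the distributional identity (\ref{e}) under the limiting procedures in (i) and (iii).
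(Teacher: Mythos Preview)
The paper does not prove this theorem; it cites Theorems~2.9 and~2.11 of \cite{KL2}. Your plans for (i) and (iii) follow the standard route and are essentially correct as sketched.

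Your plan for (ii), however, has a genuine gap. The interpolation step
\[
\|u(t)\|_{H^{\gamma+1}_{p,\theta}}\le N\,\|M^{-1}u(t)\|^{1/2}_{H^{\gamma+2}_{p,\theta}}\,\|Mu(t)\|^{1/2}_{H^{\gamma}_{p,\theta}}
\]
is fine, and FTC plus H\"older in $t$ does give $\sup_t\|Mu(t)\|_{H^{\gamma}_{p,\theta}}$ in terms of the $\frH$-norm. But you are then left needing a \emph{sup-in-$t$} bound on $\|M^{-1}u(t)\|_{H^{\gamma+2}_{p,\theta}}$, and the $\frH$-norm supplies only $L^p$-in-$t$ control of this quantity. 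Your proposed remedy---``Young's inequality \ldots\ absorbing terms appropriately''---does not close: Young gives
\[
\|u(t)\|_{H^{\gamma+1}_{p,\theta}}\le\varepsilon\,\|M^{-1}u(t)\|_{H^{\gamma+2}_{p,\theta}}+C(\varepsilon)\,\|Mu(t)\|_{H^{\gamma}_{p,\theta}},
\]
and the first term on the right is a strictly \emph{stronger} quantity than the left-hand side, so there is nothing into which it can be absorbed. A scalar toy model makes the obstruction transparent: from $A\in L^p(0,T)$ and $B\in L^\infty(0,T)$ alone there is no bound on $\sup_t\sqrt{A(t)B(t)}$.

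What is missing is an argument that actually exploits the differential relation $u_t=\tilde f$, not just the static interpolation between the two spatial norms. The approach in \cite{KL2} proceeds by differentiating a suitable norm in $t$ and controlling the resulting duality pairing: after reducing to the unweighted situation via the localized functions $v_n(t,x)=\zeta(x)u(e^{2n}t,e^nx)$ (cf.\ the proof of Lemma~\ref{lemma 03.21.2}), one writes $\tfrac{d}{dt}\|w(t)\|^p_{L_p}=p\langle|w|^{p-2}w,\,w_t\rangle$ for $w=(1-\Delta)^{(\gamma+1)/2}v_n$ and bounds the pairing via the $H^1_p$--$H^{-1}_p$ duality; integrating in $t$ and summing in $n$ then yields the sup bound. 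The parabolic time rescaling is what makes the $M^{-1}$ and $M$ shifts on the two pieces of the $\frH$-norm interact correctly. A secondary caution: your claim that $\|Mu(0)\|_{H^{\gamma}_{p,\theta}}\le N\|u(0)\|_{U^{\gamma+2}_{p,\theta}}$ follows from Lemma~\ref{lemma 1}(iii)--(iv) is not obvious either, since unwinding the definitions asks for an embedding between $H^{\gamma}_{p,\theta}$ spaces with weights differing by $2p-2$, and there is no such general embedding.
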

\begin{proof}
See Theorem 2.9 and Theorem 2.11 of \cite{KL2}.
\end{proof}

Here are  some interior H\"older estimates of functions in the space $\frH^{\gamma+2}_{p,\theta}(T)$.
\begin{theorem}
                      \label{thm interior}
  Let $p>2$ and assume
  $$
  2/p<\alpha<\beta\leq 1, \quad \gamma+2-\beta-d/p=k+\varepsilon,
  $$
  where $k\in \{0,1,2,\cdots\}$ and $\varepsilon \in (0,1]$. Denote $\sigma=\beta-1+\theta/p$. Then for any $u\in \frH^{\gamma+2}_{p,\theta}(T)$ and multi-indices $i,j$ such that
  $|i|\leq j$ and $|j|=k$,

  (i) the functions $D^iu(t,x)$ are continuous in $[0,T]\times \bR^d_+$ and
  $$
  M^{\sigma+|i|}D^iu(t,\cdot)- M^{\sigma+|i|}D^iu(0,\cdot)   \in C^{\alpha/2-1/p}([0,T], C(\bR^d_+));
  $$

  (ii) there exists a constant $N=N(p,d,\alpha,\beta)$ so that
  \begin{eqnarray}
                &&\sup_{t,s\leq T}    \left(\frac{\big|M^{\sigma+|i|}D^i(u(t)-u(s))\big|_{C(\bR^d_+)}}{|t-s|^{\alpha/2-1/p}}  +
  \frac{\big[M^{\sigma+|j|+\varepsilon}D^j(u(t)-u(s))\big]_{C^{\varepsilon}}}{|t-s|^{\alpha/2-1/p}}   \right)\nonumber\\
  &\leq& NT^{(\beta-\alpha)/2} \|u\|_{\frH^{\gamma+2}_{p,\theta}(T)}. \label{eqn 4.24.1}
  \end{eqnarray}
  \end{theorem}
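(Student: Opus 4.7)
The plan is to reduce the pointwise Hölder estimate (\ref{eqn 4.24.1}) to the classical parabolic embedding theorem on the entire space $\bR^d$, exploiting the dyadic/exponential decomposition built into the definition (\ref{def weight}) of the weighted spaces. By the density statement in Theorem \ref{banach}(iii) one may work with smooth, compactly supported $u$ and recover the continuity claim (i) by passing to the limit in (\ref{eqn 4.24.1}). Fix $x_0\in\bR^d_+$ and choose $n\in\bZ$ so that $y_0:=e^{-n}x_0$ has $y_0^1\in[1,e]$; select $\zeta$ in (\ref{eqn 5.6.5}) to be bounded below on $[1,e]$, and set
$$
v^{(n)}(t,y):=\zeta(y^1)\,u(t,e^n y^1, e^n y').
$$
A direct chain-rule computation together with the equivalences in Lemma \ref{lemma 1}(iii)--(iv) yields
$$
\int_0^T\|v^{(n)}(t)\|^p_{H^{\gamma+2}_p}\,dt\;\leq\; N\,e^{-n(\theta-p)}\|M^{-1}u\|^p_{\bH^{\gamma+2}_{p,\theta}(T)},
$$
$$
\int_0^T\|v^{(n)}_t(t)\|^p_{H^{\gamma}_p}\,dt\;\leq\; N\,e^{-n(\theta+p)}\|Mu_t\|^p_{\bH^{\gamma}_{p,\theta}(T)},
$$
and the parabolic time rescaling $\tau=e^{-2n}t$ puts both terms on a common footing.

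Next I would invoke the classical parabolic embedding on $\bR^d$: since $p>2$ and $2/p<\beta\leq 1$, interpolation in the time variable between $L_p((0,T);H^{\gamma+2}_p)$ and $W^{1,p}((0,T);H^{\gamma}_p)$ yields $W^{\beta/2,p}((0,T);H^{\gamma+2-\beta}_p)$, and Sobolev embedding in the single variable $t$ (valid because $\beta/2>1/p$) gives $C^{\beta/2-1/p}([0,T];H^{\gamma+2-\beta}_p)$. Since $\alpha<\beta$, the inequality $|t-s|^{\beta/2-1/p}\le T^{(\beta-\alpha)/2}|t-s|^{\alpha/2-1/p}$ trades the sharper time exponent for the weaker one appearing in (\ref{eqn 4.24.1}) and produces the prefactor $T^{(\beta-\alpha)/2}$. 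The assumption $\gamma+2-\beta-d/p=k+\varepsilon$ and the spatial Sobolev embedding $H^{\gamma+2-\beta}_p(\bR^d)\hookrightarrow C^{k+\varepsilon}(\bR^d)$ then transfer this temporal Hölder control to sup-norm control of $D^i v^{(n)}(t,\cdot)-D^i v^{(n)}(s,\cdot)$ for $|i|\le k$ and to $C^\varepsilon$ control of the top derivatives $D^j v^{(n)}(t,\cdot)-D^j v^{(n)}(s,\cdot)$.

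To conclude, I would evaluate the resulting inequality at the reference point $y_0$ and undo the rescaling. Chain rule gives $D^i u(t,x_0)=e^{-n|i|}D^i v^{(n)}(t,y_0)$ on the support of $\zeta$, so $(x_0^1)^{\sigma+|i|}D^iu(t,x_0)=e^{n\sigma}(y_0^1)^{\sigma+|i|}D^iv^{(n)}(t,y_0)$. The specific choice $\sigma=\beta-1+\theta/p$ is precisely the one that cancels this $e^{n\sigma}$ against the $e^n$-factors collected on the right-hand side of the embedding estimate (after the parabolic time rescaling and the two $L_p$ bounds above), yielding an estimate independent of $n$ and hence of $x_0$. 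The analogous argument applied to two points $x_0,\tilde x_0$ in a common dyadic shell provides the spatial Hölder piece of (\ref{eqn 4.24.1}), with the additional weight $M^{\varepsilon}$ absorbing the $e^{-n\varepsilon}$ produced when the $y$-distance is rescaled back to an $x$-distance. The main obstacle is precisely this bookkeeping: verifying that the two distinct $e^{-n\cdot}$ exponents emerging from the $M^{-1}u$ and $Mu_t$ halves of the $\frH^{\gamma+2}_{p,\theta}(T)$ norm, together with the parabolic rescaling and the $|i|$ derivative factors, conspire to force exactly $\sigma=\beta-1+\theta/p$ as the critical spatial weight.
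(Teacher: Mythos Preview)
The paper does not prove this theorem at all: its entire proof is the single line ``See Theorem 4.7 of \cite{Kr01}.'' So there is no in-paper argument to compare your proposal against.

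Your sketch is the standard route and is essentially what Krylov carries out in \cite{Kr01}: localize via the dyadic functions $\zeta(\cdot)u(e^n\cdot)$ that define $H^{\gamma}_{p,\theta}$, apply the unweighted parabolic trace/embedding theorem on $\bR^d$ (real interpolation between $L_p(H^{\gamma+2}_p)$ and $W^{1}_p(H^{\gamma}_p)$, then Sobolev in $t$ since $\beta/2>1/p$, then Sobolev in $x$ since $\gamma+2-\beta-d/p=k+\varepsilon$), and finally check that the powers of $e^n$ cancel precisely when the spatial weight exponent is $\sigma=\beta-1+\theta/p$. Your computation of the two exponents $e^{-n(\theta-p)}$ and $e^{-n(\theta+p)}$ and the observation that the parabolic rescaling $\tau=e^{-2n}t$ equalizes them is exactly the mechanism that makes the argument work.

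Two points deserve a little more care than your outline gives them. First, the spatial H\"older seminorm $[M^{\sigma+|j|+\varepsilon}D^j(\cdot)]_{C^{\varepsilon}}$ must be controlled for \emph{all} pairs $x,y\in\bR^d_+$, not only pairs lying in a common shell $\{e^n\le x^1<e^{n+1}\}$; the cleanest way is to bypass the pointwise evaluation and instead use the global weighted embedding (\ref{eqn 3.31.4}) applied to $u(t)-u(s)\in H^{\gamma+2-\beta}_{p,\theta-p(1-\beta)}$, which delivers both terms of (\ref{eqn 4.24.1}) at once. Second, the interpolation/embedding constant in time must be shown to be independent of the (rescaled) interval length; this is where the extra factor $T^{(\beta-\alpha)/2}$ comes from, as you note, but one should verify that the $W^{\beta/2,p}\hookrightarrow C^{\beta/2-1/p}$ step itself carries no hidden interval-length dependence after the appropriate normalization. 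Both issues are handled explicitly in \cite{Kr01}.
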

  \begin{proof}
  See Theorem 4.7 of \cite{Kr01}.
  \end{proof}

  \begin{remark} (see Remark 4.8 of \cite{Kr01} for details) For instance, if  $\theta=d$, $\gamma\geq -1$ and $\kappa_0=1-\frac{2}{p}-\frac{d}{p}>0$, then     for any $\kappa \in (0,\kappa_0)$ and
  $u\in \frH^1_{p,\theta}(T)$ with $u(0)=0$,
  \begin{equation}
                          \label{eqn 4.24.4}
  \sup_{t\leq T}\sup_{x,y\in \bR^d_+}\frac{|u(t,x)-u(t,y)|}{|x-y|^{\kappa}}<\infty.
  \end{equation}
  \begin{equation}
                          \label{eqn 4.24.5}
   \sup_{x\in \bR^d_+}\sup_{s,t\leq T}\frac{|u(t,x)-u(s,x)|}{|t-s|^{\kappa/2}}<\infty.
  \end{equation}
  Indeed, for (\ref{eqn 4.24.4}) take $j=0, \beta=\kappa_0-\kappa+2/p$ and $\varepsilon=1-\beta-d/p=\kappa=-\sigma$, then $\sigma+|j|+\varepsilon=0$ and (\ref{eqn 4.24.1}) yields (\ref{eqn 4.24.4}). Also for
  (\ref{eqn 4.24.5}), take $i=0, \alpha=\kappa+2/p, \beta=1-d/p$ then $\sigma+|i|=0$, $2/p<\alpha<\beta<1$ and $\alpha/2-1/p=\kappa/2$.
  \end{remark}

  \vspace{3mm}

For any $d_1\times d_1$ matrix $C=(c_{kr})$ we let
$$
|C|:=\sqrt{\sum_{k,r}(c_{kr})^2}.
$$
We set $A^{ij}=(a^{ij}_{kr})_{k,r=1,\ldots,d_1}$ for each $i,j=1,\ldots,d$.  Throughout the article we assume the followings.
\begin{assumption}
                     \label{main assumption2}
For each $i$ and $j$,  $A^{ij}$ depends only on $t$ and there exist finite constants
$\delta,K>0$ so that
\begin{equation}
                    \label{assumption 1}
\delta|\xi|^2\leq \sum_{i,j=1}^d(\xi^i)^*A^{ij} \;\xi^i
\end{equation}
for all (real valued) $d_1\times d$-matrix $\xi$, where $\xi^i$ denotes the $i$-th column of $\xi$.
Also, there exists a constant $K<\infty$ such that
\begin{equation}\label{assumption 2}
\left|A^{ij}\right|\le K, \quad\forall\;\;i,j=1,\ldots,d,
\end{equation}
where $*$ means matrix transposition.
\end{assumption}

 We recall (\ref{main system}) and write it as
\begin{equation}
                    \label{eqn system0}
   u^k_t=a^{ij}_{kr}(t)u^r_{x^ix^j}+f^k, \quad u^k(S)=u^k_0, \quad \quad k=1,2,\cdots, d_1,
\end{equation}
assuming  the summation convention  on  indices $i,j, r$; such
convention will be used throughout the article. In short,  we will write (\ref{eqn system0})  as
\begin{equation}
                       \label{eqn system}
u_t=A^{ij}(t)u_{x^ix^j}+f, \quad u(S)=u_0,
\end{equation}
where we regard $u,u_0,f$ as $d_1\times 1$ matrix-valued functions.

\begin{definition}
                       \label{definition 5.8.1}
A $d$-dimensional distribution-valued function $u$ defined
on $(S,T)$ is a solution of (\ref{eqn system}) in
$\frH^{\gamma+2}_{p,\theta}(S,T)$ if $u\in
M\bH^{\gamma+2}_{p,\theta}(S,T)$, $u(S)\in U^{\gamma+2}_{p,\theta}$ ($u(-\infty,\cdot):=0$ if $S=-\infty$) and (\ref{eqn system}) holds in the
sense of distributions, that is (\ref{e}) holds  with $\tilde f=A^{ij}u_{x^ix^j}+f$.
\end{definition}

The following is  our $L_p$-theory for  the parabolic  system (\ref{eqn system}). The  proof is given in section \ref{section proof}.

\begin{theorem}
                                       \label{main theorem}
Let $p\in (1,\infty)$ and $\gamma\geq 0$. Assume $\theta\in (d+1-p, d+p-1)$ if $p\in (1,2]$ and $\theta\in (d-1,d+1)$ if $p\in (2,\infty)$.
 Then for any  $f\in M^{-1}\bH^{\gamma}_{p,\theta}(T)$ and $u_0\in
U^{\gamma+2}_{p,\theta}$  system (\ref{eqn system}) admits a
unique solution $u\in \frH^{\gamma+2}_{p,\theta}(T)$, and for this solution we have
\begin{equation}
                        \label{a priori}
\|u\|_{\frH^{\gamma+2}_{p,\theta}(T)}\leq
N\left(\|Mf\|_{\bH^{\gamma}_{p,\theta}(T)}+\|u_0\|_{U^{\gamma+2}_{p,\theta}}\right),
\end{equation}
where $N=N(\gamma,p,\theta,\delta,K)$.
\end{theorem}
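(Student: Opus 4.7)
The plan is to prove the a priori estimate (\ref{a priori}) first, deduce uniqueness as an immediate consequence, and then use the method of continuity together with solvability of the model problem $u_t = \Delta u + f$ to get existence. I would first reduce to zero initial data (by subtracting an extension of $u_0 \in U^{\gamma+2}_{p,\theta}$, which is standard from \cite{kr99-1, KL2}) and to the base case $\gamma = 0$; higher $\gamma$ then follows by applying the $\gamma=0$ result to $M^{|\alpha|} D^{\alpha} u$ and using Lemma \ref{lemma 1}(iv) together with Remark \ref{remark last} to absorb the commutators that arise from the $M$-factor.

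For the a priori estimate in $\frH^2_{p,\theta}(T)$, I would follow the sharp-function route outlined in the introduction. The central step, which should be the output of Sections \ref{local estimate} and \ref{section sharp}, is a pointwise estimate of the form
\begin{equation*}
(Mu_{xx})^{\sharp}(t,x) \leq N\bigl(\mathbb{M}(|Mu_{xx}|^{p_0})(t,x) + \mathbb{M}(|Mf|^{p_0})(t,x)\bigr)^{1/p_0}
\end{equation*}
for some $p_0 < p$, with $\mathbb{M}$ and $\sharp$ taken with respect to the weighted cubes of Section \ref{FS HL}. Combining this bound with the Fefferman-Stein theorem (Theorem \ref{FS}) and the Hardy-Littlewood theorem (Theorem \ref{HL}) on $L_p(\Omega,\mu_{\theta-d})$, and absorbing the $Mu_{xx}$ term on the right either by a smallness/iteration argument on small time intervals or by Krylov's standard interpolation trick, yields
\begin{equation*}
\|Mu_{xx}\|_{\bL_{p,\theta}(T)} \leq N\,\|Mf\|_{\bL_{p,\theta}(T)}.
\end{equation*}
The restriction on $\theta$ in the statement is exactly what is needed in Lemma \ref{lemma 1}(iv) and (\ref{eqn 4.25.3}) so that control of $Mu_{xx}$ (together with $Mu_t = A^{ij}u_{x^ix^j}+f$) upgrades to full control of $\|u\|_{\frH^{2}_{p,\theta}(T)}$, including the $M^{-1}u$ and $Mu_t$ pieces of the norm.

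For existence, I would fix the family
\begin{equation*}
L_{\lambda} u := (1-\lambda)\,\Delta u + \lambda\, A^{ij}(t)\,u_{x^ix^j}, \qquad \lambda \in [0,1],
\end{equation*}
which satisfies Assumption \ref{main assumption2} uniformly in $\lambda$ (with constants $\delta' = \min(1,\delta)$, $K' = \max(1,K)$), so that the a priori estimate applies uniformly in $\lambda$. For $\lambda = 0$ the system decouples into $d_1$ copies of the scalar heat equation, for which solvability in $\frH^{2}_{p,\theta}(T)$ under the stated range of $\theta$ is known from \cite{kr99}. The method of continuity then shows that the set of $\lambda \in [0,1]$ for which $L_{\lambda}$ is solvable is both open and closed, hence equals $[0,1]$, giving the result for $\lambda = 1$.

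The main obstacle is the pointwise sharp-function bound itself. Unlike \cite{kr99}, the system structure rules out the integration-by-parts shortcuts, so one must produce a genuine Krylov-type sharp function estimate of $M u_{xx}$ using only the ellipticity of Assumption \ref{main assumption2} and the local estimates of Section \ref{local estimate}. Two regimes must be treated: an interior regime, where a cube $C_n(t,x)$ stays far from $\{x^1=0\}$ and the estimate reduces, after rescaling, to the unweighted case; and a boundary regime, where $C_n(t,x)$ touches $\{x^1=0\}$ and the weight $(x^1)^{\theta-d}$ together with the half-space geometry enters nontrivially. Matching these two regimes through the dyadic filtrations $\mathbb{C}_n,\mathbb{D}_n$ of Section \ref{FS HL}, so that Theorems \ref{FS} and \ref{HL} can be applied cleanly to the system-valued sharp function, is what I expect to be the hardest bookkeeping step.
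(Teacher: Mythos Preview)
Your overall architecture (reduce to $u_0=0$, $\gamma=0$, method of continuity against the heat equation, sharp-function estimate plus Theorems \ref{FS} and \ref{HL}) matches the paper, but there is a genuine gap in how you expect the sharp-function step to cover the full $\theta$-range.

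The paper's sharp-function estimate is not for $Mu_{xx}$ with an arbitrary $p_0<p$ and the weight $(x^1)^{\theta-d}$; it is for $u_{xx}$ with the measure $\mu(dtdx)=(x^1)^{q}\,dtdx$ and the \emph{specific} exponent $q=\theta-d+p$. One gets, for any $\varepsilon>0$,
\[
\aint_{Q_r}|u_{xx}-(u_{xx})_{Q_r}|^{q}\,\mu \;\le\; \varepsilon\,\mathbb{M}(|u_{xx}|^{q})+N(\varepsilon)\,\mathbb{M}(|f|^{q}),
\]
and then applies Theorem \ref{HL} in $L_{p/q}(\Omega,\mu)$. This requires $p/q\ge 1$, i.e.\ $\theta\le d$; together with $q>1$ this gives exactly the lower half of the range: $\theta\in(d-1,d]$ for $p>2$ and $\theta\in(d-p+1,d]$ for $p\le2$. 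There is no free parameter $p_0$ you can push below $p$ when $\theta>d$; the exponent is forced by the weight. So the sharp-function route alone does \emph{not} reach $\theta\in(d,d+1)$ (resp.\ $(d,d+p-1)$), and your claim that the $\theta$-restriction comes from Lemma \ref{lemma 1}(iv) and (\ref{eqn 4.25.3}) is incorrect---those hold on all of $(d-1,d-1+p)$.

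The missing ingredient is a duality step. The paper first proves the estimate on $\mathbb{R}\times\mathbb{R}^d_+$ (Theorem \ref{thm all time}); for $\theta\in(d,d+1)$ (resp.\ $(d,d+p-1)$) it pairs $u$ against solutions $v$ of the adjoint problem $v_t-A^{ij}v_{x^ix^j}=g$ in $\bL_{p',\bar\theta}$ with $\theta/p+\bar\theta/p'=d$, noting that $\bar\theta$ then falls into the already-proved range for $p'$. This duality, not the sharp-function estimate, is what produces the upper half of the $\theta$-interval, and your proposal omits it entirely. Without it you will only get the theorem for $\theta\le d$.
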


\begin{remark}
Various interior H\"older estimates of the solution in Theorem \ref{main theorem} can be obtained according to Theorem \ref{thm interior}. Also see Lemma \ref{lemma 3} and Lemma \ref{101004.14.53}.
\end{remark}

\begin{remark}
(i) The proof of Theorem \ref{main theorem} is based on a sharp function estimate (Lemma \ref{lemma 06.08.1}). If  $d_1=1$, then Lemma \ref{lemma 06.08.1} can be proved for any $\theta\in (d-1,d-1+p)$ as long as $p>1$; we will prove this in a subsequent article for parabolic equations with
(weighted) BMO-coefficients.

\vspace{2mm}

(ii) It is known (see Remark 3.6 of \cite{KL2}) that if $\theta\not \in (d-1,d-1+p)$, then  Theorem \ref{main theorem} is not true even for the heat equation $u_t=\Delta u+f$.

\end{remark}

Now we present our $L_p$-theory for the elliptic system (\ref{main system elliptic}). The proof is given in section \ref{section proof}.

\begin{theorem}
                                       \label{main theorem-elliptic}
Let $p\in (1,\infty)$,  $\gamma\geq 0$ and $A^{ij}$ be independent of $t$. Assume $\theta\in (d+1-p, d+p-1)$ if $p\in (1,2]$ and $\theta\in (d-1,d+1)$ if $p\in (2,\infty)$.
 Then for any  $f=(f^1,f^2,\cdots,f^{d_1})\in M^{-1}H^{\gamma}_{p,\theta}$  the system (\ref{main system elliptic}) admits a
unique solution $u\in MH^{\gamma+2}_{p,\theta}$, and for this solution we have
$$
\|M^{-1}u\|_{H^{\gamma+2}_{p,\theta}}\leq
N\|Mf\|_{H^{\gamma}_{p,\theta}},
$$
where $N=N(\gamma,p,\theta,\delta,K)$.
\end{theorem}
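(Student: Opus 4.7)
My plan is to derive Theorem \ref{main theorem-elliptic} from the already-established parabolic Theorem \ref{main theorem} via a fictitious time variable.

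\textbf{A priori estimate.} Suppose $u\in MH^{\gamma+2}_{p,\theta}$ satisfies $A^{ij}u_{x^ix^j}=f$. Fix any $\eta\in C^{\infty}_c((0,T))$ with $\eta(0)=0$ and $\eta\not\equiv 0$, and set $v(t,x):=\eta(t)u(x)$. Using Lemma \ref{lemma 1} one checks that $v\in\frH^{\gamma+2}_{p,\theta}(T)$ with $v(0)=0$, and a direct differentiation gives
\[
v_t - A^{ij}v_{x^ix^j} = \eta'(t)u(x) - \eta(t)f(x).
\]
Applying Theorem \ref{main theorem} to $v$, factoring $t$- and $x$-dependence out of each weighted norm, and using Lemma \ref{lemma 1}(iii)(iv) to bound $\|Mu\|_{H^{\gamma}_{p,\theta}}\le N\|M^{-1}u\|_{H^{\gamma+2}_{p,\theta}}$, I obtain
\[
\|M^{-1}u\|_{H^{\gamma+2}_{p,\theta}} \le N\,\frac{\|\eta'\|_{L_p(0,T)}}{\|\eta\|_{L_p(0,T)}}\,\|M^{-1}u\|_{H^{\gamma+2}_{p,\theta}} + N\|Mf\|_{H^{\gamma}_{p,\theta}}.
\]
Rescaling $\eta(t)\mapsto\eta(t/\lambda)$ on $(0,\lambda T)$ makes $\|\eta'\|_{L_p}/\|\eta\|_{L_p}=O(\lambda^{-1})\to 0$; crucially, the constant in Theorem \ref{main theorem} is independent of $T$, so picking $\lambda$ large absorbs the first right-hand term into the left and yields the elliptic a priori estimate. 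Uniqueness is immediate.

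\textbf{Existence.} I apply the method of continuity. For $s\in[0,1]$ set $L_s u:=(1-s)\Delta u + sA^{ij}u_{x^ix^j}$. Each $L_s$ satisfies Assumption \ref{main assumption2} with ellipticity constant $\min(\delta,1)$ and upper bound $K+1$, independent of $s$; hence the a priori estimate just proved holds for $L_s$ with $N$ uniform in $s$. Consequently
\[
\cS := \{s\in[0,1] : L_s:MH^{\gamma+2}_{p,\theta}\to M^{-1}H^{\gamma}_{p,\theta}\text{ is a bijection}\}
\]
is both open and closed in $[0,1]$. It remains to show $0\in\cS$: the system $\Delta u^k=f^k$ decouples into $d_1$ independent scalar Poisson equations on the half space, whose weighted $L_p$-solvability is known from the scalar elliptic theory of \cite{kr99} (equivalently, it is the $d_1=1$ instance of the present theorem, itself obtained from the same fictitious-time argument combined with the scalar parabolic result in \cite{kr99}, where sharp function estimates are not required).

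The main technical obstacle is the bookkeeping for the weighted norms in the a priori estimate: one must carefully match $\|M^{-1}v\|_{\bH^{\gamma+2}_{p,\theta}(T)}$, $\|Mv_t\|_{\bH^{\gamma}_{p,\theta}(T)}$ and $\|v(0)\|_{U^{\gamma+2}_{p,\theta}}$ against products of $L_p$-in-$t$ norms with weighted Sobolev-in-$x$ norms, and invoke Lemma \ref{lemma 1} repeatedly to compare $\|Mu\|_{H^{\gamma}_{p,\theta}}$ with $\|M^{-1}u\|_{H^{\gamma+2}_{p,\theta}}$. Once this algebra is settled, the continuity argument is standard, and the uniformity of $N$ in both $T$ and $s$ is what makes the whole reduction work.
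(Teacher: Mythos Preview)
Your fictitious-time reduction is a genuinely different route from the paper's. The paper does \emph{not} deduce the elliptic result from Theorem~\ref{main theorem}; instead it reruns the entire argument (sharp-function estimate, Fefferman--Stein, Hardy--Littlewood, duality, method of continuity) directly in the space variable, using (\ref{eqn 11.1111}) in place of (\ref{eqn 11.11}). Your approach is more economical once Theorem~\ref{main theorem} is in hand, and it is exactly the device the paper itself uses in the unweighted setting to derive (\ref{eqn single-3111}) from (\ref{eqn single-3}).

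There is, however, a real gap. The bound $\|Mu\|_{H^{\gamma}_{p,\theta}}\le N\|M^{-1}u\|_{H^{\gamma+2}_{p,\theta}}$ that you extract from Lemma~\ref{lemma 1}(iii)(iv) is \emph{false}: by (iii) it is equivalent to the embedding $H^{\gamma+2}_{p,\theta-p}\hookrightarrow H^{\gamma}_{p,\theta+p}$, and in the series (\ref{def weight}) the $n$-th terms of the two norms differ by the unbounded factor $e^{2np}$; a sum $\sum_{m\ge 0} c_m\,w(e^{-m}x)$ of dilates of a fixed bump $w\in C^{\infty}_0(\bR^d_+)$ with suitable $c_m$ gives an explicit $u$ with $M^{-1}u\in H^{\gamma+2}_{p,\theta}$ but $Mu\notin H^{\gamma}_{p,\theta}$. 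For such $u$ you cannot even verify $v=\eta u\in\frH^{\gamma+2}_{p,\theta}(T)$, since that requires $Mv_t=\eta'\,Mu\in\bH^{\gamma}_{p,\theta}(T)$. The repair follows the proof of (\ref{eqn single-3111}) verbatim: establish the a~priori estimate first for $u\in C^{\infty}_0(\bR^d_+)$, where $\|Mu\|_{H^{\gamma}_{p,\theta}}$ is trivially finite, and then simply let $\lambda\to\infty$ so that the term $N\lambda^{-1}\|Mu\|_{H^{\gamma}_{p,\theta}}$ vanishes outright (no absorption is needed or possible). The estimate then extends to all of $MH^{\gamma+2}_{p,\theta}$ by density (Lemma~\ref{lemma 1}(i),(iii)) and the boundedness of $u\mapsto A^{ij}u_{x^ix^j}$ from $MH^{\gamma+2}_{p,\theta}$ to $M^{-1}H^{\gamma}_{p,\theta}$. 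With this correction your method-of-continuity argument goes through.
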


\begin{remark}
Theorem \ref{main theorem}  and Theorem \ref{main theorem-elliptic} hold not only for $\gamma\geq 0$ but also for any $\gamma<0$. This can be easily proved by using the results for $\gamma\geq 0$ and repeating the arguments used for single equations (see the proof of Theorem 5.6 of \cite{kr99}).
\end{remark}

\mysection{Preliminary estimates : Some local estimates  of solutions}
                            \label{local estimate}

In this section we prove a version of Theorem \ref{main theorem} for $\theta=d$. This result is used to derive
some local estimates of $D^{\alpha}u$ for any multi-index $\alpha$,   where $u$ is a solution of (\ref{eqn system}).

First, we introduce some results for systems defined on the  {\bf{entire space}}.
 For $-\infty\leq S<T\leq \infty$ we denote $\bH^{\ga}_p(S,T):=L_p((S,T),H^{\ga}_p)$ and $\bH^{\ga}_p(T):=\bH^{\ga}_p(0,T)$.

\begin{theorem}
                     \label{thm entire}
Let $\ga\in \bR$ and $-\infty\leq S<T \leq \infty$. Let $f\in \bH^{\ga}_p(S,T)$ and  $u\in \bH^{\ga+2}_p(S,T)$ satisfy
$$
u_t=A^{ij}(t)u_{x^ix^j}+f, \quad t>S, x\in \bR^d.
$$
Additionally assume $u(S,\cdot)=0$ if $S>-\infty$. Then
\begin{equation}
                            \label{eqn single}
\|u_{xx}\|^p_{\bH^{\ga}_p(S,T)}\leq N(d,p,\delta,K)\|f\|^p_{\bH^{\ga}_p(S,T)}.
\end{equation}
Also if $-\infty<S<T<\infty$, then
$$
\|u\|^p_{\bH^{\ga+2}_p(S,T)}\leq N(d,p,\delta,K,S,T)\|f\|^p_{\bH^{\ga}_p(S,T)}.
$$
\end{theorem}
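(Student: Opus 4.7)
The plan is to reduce the problem to a Fourier multiplier estimate via the spatial Fourier transform, exploiting the ellipticity (\ref{assumption 1}) which makes the principal symbol uniformly positive. First, since $A^{ij}(t)$ is independent of $x$, the operator $\partial_t - A^{ij}(t)\partial_{x^i}\partial_{x^j}$ commutes with $(1-\Delta)^{\gamma/2}$; applying this operator to both sides reduces the first inequality to the case $\gamma = 0$, namely $\|u_{xx}\|_{\bH^0_p(S,T)} \leq N \|f\|_{\bH^0_p(S,T)}$. Next, if $S > -\infty$, extending $u$ and $f$ by zero for $t \leq S$ (admissible because $u(S,\cdot) = 0$) yields a pair on $\bR \times \bR^d$ solving the same equation distributionally, so it suffices to treat $S = -\infty$, $T = +\infty$.

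Taking the spatial Fourier transform converts the system into the matrix-valued ODE
\begin{equation*}
\partial_t \hat u(t,\xi) + M(t,\xi)\, \hat u(t,\xi) = \hat f(t,\xi), \qquad M(t,\xi) := A^{ij}(t)\,\xi_i \xi_j.
\end{equation*}
By (\ref{assumption 1}), the symmetric part of the $d_1 \times d_1$ matrix $M(t,\xi)$ is bounded below by $\delta |\xi|^2 I_{d_1}$, so the propagator $\Phi(t,s,\xi)$ defined by $\partial_t \Phi = -M(t,\xi)\Phi$ with $\Phi(s,s,\xi) = I_{d_1}$ satisfies the operator-norm bound $|\Phi(t,s,\xi)| \leq e^{-\delta|\xi|^2(t-s)}$ for $t \geq s$. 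Duhamel's formula then gives $\widehat{u_{x^k x^l}}(t,\xi) = -\xi_k \xi_l \int_{-\infty}^t \Phi(t,s,\xi)\, \hat f(s,\xi)\, ds$, and for $p=2$ Plancherel's identity combined with Young's convolution inequality in $t$ immediately yields the desired bound.

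For general $p \in (1,\infty)$, this is a vector-valued parabolic singular integral. Differentiating the symbol in $\xi$ via the Peano--Volterra expansion of $\Phi$ produces the parabolic H\"ormander--Mihlin condition $|\xi|^{|\alpha|}\,|D_\xi^\alpha[\xi_k \xi_l \Phi(t,s,\xi)]| \leq N_\alpha\, e^{-\delta(t-s)|\xi|^2/2}$ uniformly in $t \geq s$; together with the $L_2$-bound, the $L_p$-boundedness follows either from the vector-valued Benedek--Calder\'on--Panzone theorem or from a direct Calder\'on--Zygmund decomposition on parabolic cylinders. The main obstacle is verifying these kernel estimates for the matrix propagator, since the non-commutativity of $M(t,\cdot)$ across different times rules out an explicit exponential formula; however, each $\xi$-derivative produces only polynomial-in-$|\xi|^2(t-s)$ factors absorbed by the exponential decay, and the uniform bound $|A^{ij}|\leq K$ keeps all constants under control. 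Finally, the second inequality for finite $S,T$ follows by elementary arguments: using $u(t) = \int_S^t [A^{ij}(s) u_{x^ix^j}(s) + f(s)]\, ds$, Minkowski's inequality in $H^\gamma_p$ and $|A^{ij}| \leq K$ give $\|u\|_{\bH^\gamma_p(S,T)} \leq N(T-S)(\|u_{xx}\|_{\bH^\gamma_p(S,T)} + \|f\|_{\bH^\gamma_p(S,T)})$, which combined with the first inequality and the Riesz-transform norm equivalence $\|u\|_{H^{\gamma+2}_p} \sim \|u\|_{H^\gamma_p} + \sum_{i,j}\|u_{x^i x^j}\|_{H^\gamma_p}$ completes the proof.
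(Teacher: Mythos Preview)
Your proposal is correct and aligned with the paper. The paper's own ``proof'' is simply a citation: it observes that the case $\gamma=0$ is classical (Theorem~1.1 of \cite{Lee}) and that the general case reduces to $\gamma=0$ because $(1-\Delta)^{\mu/2}:\bH^{\gamma}_p(S,T)\to\bH^{\gamma-\mu}_p(S,T)$ is an isometry commuting with the operator. Your reduction to $\gamma=0$ is exactly this step, and your sketch for $\gamma=0$ via the matrix propagator $\Phi(t,s,\xi)$ and vector-valued parabolic Calder\'on--Zygmund theory is the standard route, essentially the content of the cited reference. One small inaccuracy: the multiplier bound you state, $|\xi|^{|\alpha|}|D_\xi^\alpha[\xi_k\xi_l\Phi(t,s,\xi)]|\leq N_\alpha e^{-\delta(t-s)|\xi|^2/2}$, is missing a factor of $(t-s)^{-1}$ on the right (already for $\alpha=0$ one has $|\xi|^2 e^{-\delta(t-s)|\xi|^2}\leq C(t-s)^{-1}e^{-\delta(t-s)|\xi|^2/2}$, not a uniform constant); this is the form needed for the parabolic kernel bounds, and it is what your computation actually produces.
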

\begin{proof}
This is a classical result. See, for instance, Theorem 1.1 of \cite{Lee}. Actually in \cite{Lee} the theorem is proved only when $\ga=0$, but the general case follows by the fact
the operator $(1-\Delta)^{\mu/2}: \bH^{\ga}_p(S,T) \to \bH^{\ga-\mu}_p(S,T)$ is an isometry.
\end{proof}

Theorem \ref{eqn single} yields the following result.
\begin{corollary}
Let $u\in C^{\infty}_0(\bR^{d+1};\bR^{d_1})$. Then
\begin{equation}
                            \label{eqn single-3}
\|u_{xx}\|^p_{\bH^{\ga}_p(-\infty,\infty)}\leq N(d,p,\delta,K)\;\|u_t-A^{ij}u_{x^ix^j}\|^p_{\bH^{\ga}_p(-\infty,\infty)}.
\end{equation}

\end{corollary}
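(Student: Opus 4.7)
The plan is to view this as an immediate application of Theorem \ref{thm entire} with $S=-\infty$ and $T=\infty$. The only thing to check is that a smooth compactly supported $u$ qualifies as an admissible solution in the theorem's hypotheses, and that all the $\bH^{\gamma}_p$-norms appearing in the estimate are finite.

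First I would set $f := u_t - A^{ij}(t) u_{x^ix^j}$. Since $u \in C^{\infty}_0(\bR^{d+1};\bR^{d_1})$, both $u$ and $f$ are smooth functions of compact support in $\bR \times \bR^d$, hence they lie in $\bH^{\gamma+2}_p(-\infty,\infty)$ and $\bH^{\gamma}_p(-\infty,\infty)$ respectively for every $\gamma \in \bR$ and $p \in (1,\infty)$: the spatial Bessel-potential norm $\|u(t,\cdot)\|_{H^{\gamma}_p}$ is bounded uniformly in $t$ (apply $(1-\Delta)^{\gamma/2}$ to a Schwartz function), while the time integration is over the bounded set $\{t : u(t,\cdot)\not\equiv 0\}$.

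Second, by construction $u$ solves
\begin{equation*}
u_t = A^{ij}(t) u_{x^ix^j} + f \qquad \text{on } \bR\times \bR^d,
\end{equation*}
in the classical (hence distributional) sense. Taking $S=-\infty$ in Theorem \ref{thm entire}, the auxiliary requirement ``$u(S,\cdot)=0$ if $S>-\infty$'' is vacuous, and the theorem directly yields
\begin{equation*}
\|u_{xx}\|^p_{\bH^{\gamma}_p(-\infty,\infty)} \leq N(d,p,\delta,K)\, \|f\|^p_{\bH^{\gamma}_p(-\infty,\infty)},
\end{equation*}
which is exactly the desired inequality \eqref{eqn single-3} after substituting back the definition of $f$.

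There is essentially no obstacle; the corollary is a mere rephrasing of Theorem \ref{thm entire} in a form convenient for later applications (where one multiplies an arbitrary solution by a cut-off to enter the $C^{\infty}_0$ setting). The only mild subtlety worth a line in the write-up is pointing out that one may indeed take $S=-\infty$ in the theorem, so that no initial-data condition needs to be imposed and the constant $N$ is allowed to depend only on $d,p,\delta,K$ (and not on the support of $u$).
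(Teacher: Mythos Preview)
Your proposal is correct and matches the paper's approach exactly: the paper simply states that the corollary follows from Theorem~\ref{thm entire} (more precisely, from estimate~\eqref{eqn single}) without further comment, and your argument is the natural one-line justification of this claim.
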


\begin{corollary}
                 \label{so crucial}
    Let $0<T<\infty$, $f^i\in \bL_p(T)$, and $u\in \bH^1_p(T)$   satisfies
    $$
u_t=A^{ij}(t)u_{x^ix^j}+f^i_{x^i}, \quad t\in (0,T), x\in \bR^d
$$
 with zero initial condition $u(0)=0$. Then
\begin{equation}
                            \label{eqn so crucial}
\|u_{x}\|^p_{\bL_p(T)}\leq N(d,p,\delta,K)\|f^i\|^p_{\bL_p(T)}.
\end{equation}
$$
\|u\|^p_{\bH^1_p(T)}\leq N(d,p,\delta,K,T)\|f^i\|^p_{\bL_p(T)}.
$$
\end{corollary}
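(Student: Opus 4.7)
The plan is to obtain the $T$-independent first estimate via a duality argument against the backward adjoint system, and to deduce the second estimate directly from Theorem \ref{thm entire} at the Sobolev scale $\gamma=-1$.

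For \eqref{eqn so crucial}, fix $k\in\{1,\dots,d\}$ and an arbitrary $\phi\in C^\infty_0((0,T)\times\bR^d;\bR^{d_1})$, which I extend by zero to $\bR\times\bR^d$. First I would solve the backward adjoint system
$$
-v_t = (A^{ij}(t))^*\, v_{x^ix^j}+\phi\quad\text{on }(-\infty,T)\times\bR^d,\qquad v(T,\cdot)=0.
$$
After the time reversal $\tilde v(t)=v(T-t)$ this becomes a forward parabolic system with zero initial data, and its new coefficients $(A^{ij}(T-t))^*$ satisfy Assumption \ref{main assumption2} with the same constants $(\delta,K)$: the quadratic form $\sum(\xi^i)^*A^{ij}\xi^j$ depends only on the symmetrized part of the coefficient family and is invariant under $A\mapsto A^*$. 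Hence Theorem \ref{thm entire} yields $v\in\bH^2_{p'}(-\infty,T)$ with
$$
\|v_{xx}\|_{\bL_{p'}(-\infty,T)}\leq N(d,p,\delta,K)\|\phi\|_{\bL_{p'}(T)}.
$$

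Next, I would test the equation for $u$ against $w:=v_{x^k}$ on $(0,T)\times\bR^d$. Since $u\in\bH^1_p(T)$ with $u_t\in\bH^{-1}_p(T)$, $u$ belongs to $C([0,T];L_p)$ and vanishes at $t=0$, and may be approximated by functions in $C^\infty_0((0,T)\times\bR^d)$. Integrating by parts twice in space (using that $A^{ij}$ is $x$-independent) and once in time, with boundary terms killed by $u(0)=0$ and $v(T)=0$, I obtain
$$
\int_0^T\!\!\int_{\bR^d}(u_t-A^{ij}u_{x^ix^j})\cdot v_{x^k}\,dxdt = -\int_0^T\!\!\int_{\bR^d} u\cdot\bigl(v_{tx^k}+(A^{ij})^*v_{x^ix^jx^k}\bigr)\,dxdt.
$$
Differentiating the adjoint equation once in $x^k$ gives $v_{tx^k}+(A^{ij})^*v_{x^ix^jx^k}=-\phi_{x^k}$, while substituting $u_t-A^{ij}u_{x^ix^j}=f^i_{x^i}$ and integrating by parts in $x^i$ on the left produces the identity
$$
\int_0^T\!\!\int_{\bR^d} u_{x^k}\cdot\phi\,dxdt = \int_0^T\!\!\int_{\bR^d} f^i\cdot v_{x^ix^k}\,dxdt.
$$
Hölder's inequality and the bound on $v_{xx}$ give $\bigl|\int u_{x^k}\cdot\phi\bigr|\leq N\|f^i\|_{\bL_p(T)}\|\phi\|_{\bL_{p'}(T)}$; taking the supremum over $\phi$ and summing in $k$ yields \eqref{eqn so crucial} with $N$ independent of $T$.

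For the $\bH^1_p(T)$-bound, since $D_i\colon L_p\to H^{-1}_p$ is bounded, $\|f^i_{x^i}\|_{\bH^{-1}_p(T)}\leq N\|f^i\|_{\bL_p(T)}$. Applying Theorem \ref{thm entire} with $\gamma=-1$ on the bounded interval $(0,T)$ then gives $\|u\|_{\bH^1_p(T)}\leq N(d,p,\delta,K,T)\|f^i\|_{\bL_p(T)}$. The main subtlety is making the integration by parts rigorous: $v$ is only in $\bH^2_{p'}$ and $u$ only in $\bH^1_p$, so the technical point is justifying the vanishing of the time-boundary terms from the $C([0,T];L_p)$-regularity of $u$ and the analogous regularity of $v$, rather than from pointwise-in-time identities. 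Once mollification takes care of that, the rest of the derivation is routine.
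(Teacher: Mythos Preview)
Your argument is correct, but the route to the $T$-independent bound \eqref{eqn so crucial} is genuinely different from the paper's. The paper first applies Theorem~\ref{thm entire} at $\gamma=-1$ together with the elementary estimates $\|f^i_{x^i}\|_{H^{-1}_p}\le N\|f^i\|_{L_p}$ and $\|u_x\|_{L_p}\le N(\|u_{xx}\|_{H^{-1}_p}+\|u\|_{L_p})$ to obtain
\[
\|u_x\|_{\bL_p(T)}\le N\bigl(\|f^i\|_{\bL_p(T)}+\|u\|_{\bL_p(T)}\bigr),
\]
with $N$ independent of $T$; it then removes the lower-order term by parabolic rescaling $u^c(t,x)=u(c^2t,cx)$, which leaves $\|u_x\|$ and $\|f^i\|$ at the same scale while multiplying $\|u\|$ by $c^{-1}$, and lets $c\to\infty$. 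Your duality argument instead transfers the problem to the backward adjoint system and uses only the second-derivative bound $\|v_{xx}\|_{\bL_{p'}}\le N\|\phi\|_{\bL_{p'}}$ from Theorem~\ref{thm entire}, obtaining the sharp constant directly. The scaling proof is shorter and avoids solving an auxiliary problem; your duality proof is more flexible in that it does not rely on the dilation invariance of the whole-space problem, at the cost of the bookkeeping you flag (third derivatives of $v$ appear in the intermediate identities, which is harmless since $\phi\in C^\infty_0$, and the time-boundary terms need the $C([0,T];L_p)$ regularity of $u$ and $v$). Your verification that $(A^{ij})^*$ satisfies Assumption~\ref{main assumption2} with the same $\delta$ is also correct: the quadratic form $\sum_{i,j}(\xi^i)^*A^{ij}\xi^j$ equals its own transpose, hence depends only on the symmetrized family $\tfrac12(A^{ij}+(A^{ji})^*)$, which is unchanged under $A^{ij}\mapsto(A^{ij})^*$. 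For the $\bH^1_p(T)$ bound the two approaches coincide.
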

\begin{proof}
Remember
$$
\|f^i_{x}\|_{H^{-1}_p}\leq N\|f^i\|_{L_p},\quad \|u_x\|_{L_p}\leq N(\|u_{xx}\|_{H^{-1}_p}+\|u\|_{L_p(T)}).
$$
By (\ref{eqn single}) with $\gamma=-1$,
\begin{equation}
                           \label{eqn 3.30.1}
\|u_x\|_{\bL(T)}\leq N(\|f^i\|_{\bL(T)}+\|u\|_{L_p(T)}).
\end{equation}
Notice that, for any constant $c>0$, the function $u^c(t,x):=u(c^2t,cx)$ satisfies
$$
u^c_t=A^{ij}(c^2t)u^c_{x^ix^j}+ (cf^i(c^2t,cx))_{x^i}.
$$
Thus for this function (\ref{eqn 3.30.1}) with $c^{-2}T$ in place of $T$ becomes
$$
\|u_x\|_{\bL(T)}\leq N(\|f^i\|_{\bL(T)}+c^{-1}\|u\|_{L_p}).
$$
Now we get (\ref{eqn so crucial}) by taking $c\to \infty$.
\end{proof}

\begin{corollary}
Let $u\in C^{\infty}_0(\bR^{d};\bR^{d_1})$ and $A^{ij}$ be independent of $t$. Then
\begin{equation}
                            \label{eqn single-3111}
\|u_{xx}\|^p_{H^{\ga}_p}\leq N(d,p,\delta,K)\;\|A^{ij}u_{x^ix^j}\|^p_{H^{\ga}_p}.
\end{equation}
\end{corollary}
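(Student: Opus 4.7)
The plan is to reduce the elliptic estimate to the parabolic estimate (\ref{eqn single-3}) by introducing a time-variable via a cutoff. Specifically, I will fix $\eta\in C^{\infty}_0(\bR)$ with $\eta \not\equiv 0$, fix a large parameter $T>0$, and set
\[
v(t,x):=\eta(t/T)\,u(x)\in C^{\infty}_0(\bR^{d+1};\bR^{d_1}).
\]
Since $A^{ij}$ depends only on $t$ (and here is the same operator in $x$), one computes
\[
v_t(t,x)-A^{ij}v_{x^ix^j}(t,x)=T^{-1}\eta'(t/T)u(x)-\eta(t/T)A^{ij}u_{x^ix^j}(x).
\]
Because the Bessel potential norm acts only in $x$, the $\bH^{\gamma}_p(-\infty,\infty)$ norms of functions of the form $\varphi(t)w(x)$ factor as products, giving
\[
\|v_{xx}\|^p_{\bH^{\gamma}_p(-\infty,\infty)}=T\,\|\eta\|^p_{L_p(\bR)}\,\|u_{xx}\|^p_{H^{\gamma}_p},
\]
and the triangle inequality on the right-hand side yields
\[
\|v_t-A^{ij}v_{x^ix^j}\|^p_{\bH^{\gamma}_p(-\infty,\infty)}\le N_p\!\left(T^{1-p}\|\eta'\|^p_{L_p}\|u\|^p_{H^{\gamma}_p}+T\|\eta\|^p_{L_p}\|A^{ij}u_{x^ix^j}\|^p_{H^{\gamma}_p}\right).
\]

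Next I would apply the global parabolic estimate (\ref{eqn single-3}) to $v$, divide both sides by $T\|\eta\|^p_{L_p}$, and obtain
\[
\|u_{xx}\|^p_{H^{\gamma}_p}\le N\!\left(T^{-p}\frac{\|\eta'\|^p_{L_p}}{\|\eta\|^p_{L_p}}\|u\|^p_{H^{\gamma}_p}+\|A^{ij}u_{x^ix^j}\|^p_{H^{\gamma}_p}\right),
\]
where the constant $N=N(d,p,\delta,K)$ does not depend on $T$. Since $u\in C^{\infty}_0(\bR^d;\bR^{d_1})$, the norm $\|u\|_{H^{\gamma}_p}$ is finite, so sending $T\to\infty$ annihilates the first term on the right and delivers the required inequality
\[
\|u_{xx}\|^p_{H^{\gamma}_p}\le N\,\|A^{ij}u_{x^ix^j}\|^p_{H^{\gamma}_p}.
\]

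I do not foresee a real obstacle here; the only point that requires a moment's care is that the time-cutoff trick works precisely because the leading coefficients do not depend on $x$, so $A^{ij}v_{x^ix^j}=\eta(t/T)A^{ij}(t)u_{x^ix^j}$ separates variables cleanly, and because $(1-\Delta_x)^{\gamma/2}$ commutes with multiplication by the scalar $\eta(t/T)$, making the $H^{\gamma}_p$ norms factor. The scaling $t\mapsto t/T$ is what generates the crucial gain of $T^{-p}$ on the lower-order term while the principal terms scale as $T$, which is the mechanism that eliminates the $\|u\|_{H^{\gamma}_p}$ contribution in the limit.
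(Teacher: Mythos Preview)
Your proof is correct and follows essentially the same approach as the paper: the paper sets $\eta_n(t)=n^{-1/p}\eta(t/n)$ with $\int|\eta_n|^p=1$, applies (\ref{eqn single-3}) to $v_n(t,x)=\eta_n(t)u(x)$, and lets $n\to\infty$, which is exactly your scaling argument with $T$ in place of $n$ and the normalization absorbed into the final division by $T\|\eta\|_{L_p}^p$. The mechanism---factorization of the $\bH^\gamma_p$ norm and the $T^{-p}$ gain on the $u_t$-term---is identical.
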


\begin{proof}
Take a nonnegative smooth function $\eta(t)\in C^{\infty}_0(-1,1)$ so that $\int_{\bR}\eta^p(t)dt=1$. For each $n=1,2,\cdots$, define $\eta_n(t)=n^{-1/p}\eta(t/n)$. Then applying (\ref{eqn single-3}) for $v_n(t,x):=\eta_n(t)u(x)$,
$$
\|u_{xx}\|^p_{H^{\ga}_p}\leq N \|A^{ij}u_{x^ix^j}\|^p_{H^{\ga}_p}+ N\|u\|^p_{H^{\ga}_p}\int_{\bR} |\eta'_n|^p dt
$$
Now it is enough to let $n\to \infty$. The corollary is proved.
\end{proof}

Remember that for any  $t\in \bR$, $(x^1,x')\in \bR^d$, we defined
$$
B_r(x)=(x^1-r,x^1+r)\times B'_r(x'), \quad Q_r(t,x)=(t,t+r^2)\times B_r(x),
$$
where $B'_r(x')$ is the open ball in $\bR^{d-1}$ of radius $r$ with center $x'$. By $C^{\infty}_{loc}(\bR^{d+1};\mathbb{R}^{d_1})$ we denote the set of
$\mathbb{R}^{d_1}$-valued functions $u$ defined on $\bR^{d+1}$ and such
that $\zeta u\in C^{\infty}_0(\bR^{d+1};\mathbb{R}^{d_1})$ for any
$\zeta\in C^{\infty}_0(\bR^{d+1};\mathbb{R})$.

\begin{theorem}
                          \label{thm 5.5.1}
 Let $q\in (1,\infty)$ and $(t,x)\in \bR^{d+1}$. Then there exists a constant $N$, depending only on $q,d,d_1,\delta$ and $K$ so that for any $\lambda\geq 4, r>0$ and $u\in C^{\infty}_{loc}(\bR^{d+1};\bR^{d_1})$, we have
 \begin{eqnarray*}
&&\aint_{Q_r(t,x)}\aint_{Q_r(t,x)}|u_{xx}(s,y)-u_{xx}(r,z)|^q \,dsdydrdz\\
&\leq& N\lambda^{-q}\aint_{Q_{\lambda r}(t,x)}|u_{xx}|^q \,dsdy +N\lambda^{d+2}\aint_{Q_{\lambda r}(t,x)}|u_t+A^{ij}u_{x^ix^j}|^q \,dsdy.
\end{eqnarray*}
  \end{theorem}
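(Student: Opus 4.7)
The plan is a classical Krylov-style decomposition of $u$ into a forced part controlled via the global $L^q$-theorem and a homogeneous part controlled via interior regularity.

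First, by translating to $(t,x)=(0,0)$ and applying the parabolic rescaling $v(s,y):=u(r^2 s,\,r y)$, the cylinder $Q_r(t,x)$ becomes $Q_1(0,0)$, while $u_{xx}$ and $u_t - A^{ij}u_{x^i x^j}$ both rescale by $r^{-2}$. Consequently every term in the inequality picks up a common factor $r^{-2q}$, so I may assume $(t,x)=(0,0)$ and $r=1$.

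Set $g := u_t - A^{ij}(t)u_{x^i x^j}$ (this matches (\ref{eqn system0}); the ``$+$'' in the statement is immaterial since it only appears under $|\cdot|$). Let $v$ solve the Cauchy problem
\begin{equation*}
v_s - A^{ij}(s)v_{y^i y^j} = g\cdot 1_{Q_\lambda}, \qquad v(-\infty,\cdot)=0,
\end{equation*}
on $\bR^{d+1}$, and put $w:=u-v$, so that $w_s = A^{ij}(s)w_{y^i y^j}$ on $Q_\lambda$. Theorem~\ref{thm entire} with $\gamma=0$ and $S=-\infty$ yields $\int_{\bR^{d+1}}|v_{yy}|^q \le N\int_{Q_\lambda}|g|^q$, whence $\aint_{Q_1}|v_{yy}|^q \le N\lambda^{d+2}\aint_{Q_\lambda}|g|^q$ since $|Q_\lambda|/|Q_1|=\lambda^{d+2}$.

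Because $A^{ij}$ depends only on time, every spatial derivative of $w$ again solves the same homogeneous system; standard interior regularity (derived from Theorem~\ref{thm entire} via a cutoff bootstrap, the smoothness being in hand since $u$ and hence $w$ lies in $C^\infty_{loc}$) yields, for $\lambda\ge 4$,
\begin{equation*}
\sup_{Q_1}\bigl(|w_{yyy}|+|w_{syy}|\bigr) \le N\lambda^{-1}\Bigl(\aint_{Q_\lambda}|w_{yy}|^q\Bigr)^{1/q},
\end{equation*}
the factor $\lambda^{-1}$ being the parabolic scaling penalty for one extra derivative on a shrinking cylinder ($\lambda^{-2}$ for the time derivative is dominated by $\lambda^{-1}$ since $\lambda\ge 1$). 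Since $\operatorname{diam}(Q_1)$ is bounded, the mean value theorem gives $|w_{yy}(s_1,y_1)-w_{yy}(s_2,y_2)|\le N\sup_{Q_1}(|w_{yyy}|+|w_{syy}|)$ for $(s_i,y_i)\in Q_1$. Combining with $u_{yy}=v_{yy}+w_{yy}$, taking the $q$-th power and averaging twice over $Q_1$ gives
\begin{equation*}
\aint_{Q_1}\!\aint_{Q_1}|u_{yy}(s_1,y_1)-u_{yy}(s_2,y_2)|^q \,ds_1dy_1ds_2dy_2 \le N\aint_{Q_1}|v_{yy}|^q + N\lambda^{-q}\aint_{Q_\lambda}|w_{yy}|^q.
\end{equation*}
Bounding $|w_{yy}|^q \le N(|u_{yy}|^q+|v_{yy}|^q)$ on $Q_\lambda$ and invoking the $L^q$-bound on $v$ (absorbing $\lambda^{-q}\le \lambda^{d+2}$, valid since $\lambda\ge 1$) produces exactly the claimed inequality.

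The main obstacle is the interior regularity estimate for $w$ in the third paragraph: Theorem~\ref{thm entire} only supplies a global-in-space $L^q$ bound, so extracting a pointwise bound on $w_{yyy}$ and $w_{syy}$ in $Q_1$ controlled by $\lambda^{-1}$ times an $L^q$-average on $Q_\lambda$ requires a careful cutoff-and-iteration argument (possibly combined with Sobolev embedding), with the scaling factors tracked precisely so that the final exponents $\lambda^{-q}$ and $\lambda^{d+2}$ are sharp. The other steps are standard manipulations of the decomposition and the averaging.
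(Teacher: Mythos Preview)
Your proposal is correct and follows essentially the same route as the paper, which simply cites Theorem~6.1.2 of \cite{kr08} and observes that Krylov's proof carries over verbatim to systems because Theorem~\ref{thm entire} does. The decomposition $u=v+w$ with $v$ absorbing the localized forcing via the global $L_q$-estimate and $w$ handled by interior regularity of the homogeneous system is exactly Krylov's argument; the interior estimate you flag as the ``main obstacle'' is the content of Lemma~2.4.4 in \cite{kr08} (applied to $w_{yy}$, which again solves the homogeneous system since $A^{ij}$ is $x$-independent), combined with Sobolev embedding, and the $\lambda^{-1}$ scaling factor you quote is the correct one.
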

  \begin{proof}
  See Theorem 6.1.2 of \cite{kr08}. Actually this theorem is proved when $d_1=1$, and the proof is based on Theorem \ref{thm entire}.
  Since Theorem \ref{thm entire} holds for any $d_1=1,2,\cdots$, the theorem can be proved  by repeating the proof of Theorem 6.1.2 of \cite{kr08} word for word.
  \end{proof}

\begin{corollary}
                      \label{cor 5.17.1}
    Let $u=u(x)\in C^{\infty}_{loc}(\bR^d; \bR^{d_1})$ and $A^{ij}$ be independent of $t$. Then for any $x\in \bR^d$, $\lambda\geq 4$ and $r>0$,
    \begin{eqnarray*}
&&\aint_{B_r(x)}\aint_{B_r(x)}|u_{xx}(y)-u_{xx}(z)|^q dydz\\
&\leq& N\lambda^{-q}\aint_{B_{\lambda r}(x)}|u_{xx}|^q dy +N\lambda^{d+2}\aint_{B_{\lambda r}(x)}|A^{ij}u_{x^ix^j}|^q dy.
\end{eqnarray*}
  \end{corollary}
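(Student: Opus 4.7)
The plan is to deduce Corollary \ref{cor 5.17.1} directly from the parabolic estimate in Theorem \ref{thm 5.5.1} by lifting the elliptic function $u$ to a time-independent function on $\bR^{d+1}$. The hypothesis that $A^{ij}$ is independent of $t$ is exactly what makes this reduction go through cleanly.

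First I would define $\tilde u(t,x):=u(x)$ for $(t,x)\in \bR^{d+1}$. Since $u\in C^{\infty}_{loc}(\bR^d;\bR^{d_1})$, this lift satisfies $\tilde u\in C^{\infty}_{loc}(\bR^{d+1};\bR^{d_1})$, so Theorem \ref{thm 5.5.1} is applicable to $\tilde u$. Because $\tilde u$ does not depend on the time variable, we have $\tilde u_t\equiv 0$ and $\tilde u_{x^ix^j}(s,y)=u_{x^ix^j}(y)$. Hence
\[
\tilde u_t+A^{ij}\tilde u_{x^ix^j}=A^{ij}(t)u_{x^ix^j}=A^{ij}u_{x^ix^j},
\]
where the last equality uses the $t$-independence of the coefficients.

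Next I would apply Theorem \ref{thm 5.5.1} to $\tilde u$ at an arbitrary base point $(t,x)\in \bR^{d+1}$ (e.g.\ $t=0$) with the same $r$ and $\lambda\geq 4$. Since the integrands on both sides of the parabolic estimate depend only on the spatial variable after substituting $\tilde u$, the cylinder averages collapse to ball averages: for any function $g=g(y)$,
\[
\aint_{Q_r(t,x)} g(y)\,dsdy=\aint_{B_r(x)} g(y)\,dy,
\]
and analogously for the double average over $Q_r(t,x)\times Q_r(t,x)$ of $|u_{xx}(y)-u_{xx}(z)|^q$. Applying this to each of the three integrals in Theorem \ref{thm 5.5.1} yields exactly the three integrals appearing in Corollary \ref{cor 5.17.1}, with the same constants.

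There is no substantive obstacle here; the only thing to verify carefully is the reduction of cylindrical averages to ball averages, which is a matter of Fubini, and the observation that $A^{ij}$ being $t$-independent ensures the ``source term'' on the right-hand side becomes $A^{ij}u_{x^ix^j}$ rather than something containing a time dependence. The constant $N$ inherited from Theorem \ref{thm 5.5.1} depends only on $q,d,d_1,\delta,K$, matching the statement of the corollary.
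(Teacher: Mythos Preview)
Your proposal is correct and is exactly the reduction the paper has in mind: the paper states the corollary immediately after Theorem \ref{thm 5.5.1} without proof, relying on the obvious lift $\tilde u(t,x)=u(x)$ and the fact that, for time-independent integrands, cylinder averages over $Q_r(t,x)=(t,t+r^2)\times B_r(x)$ collapse to spatial averages over $B_r(x)$. There is nothing to add.
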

\vspace{0.2cm}

From now on we consider systems defined on a {\bf{half space}}.
Remember
$$
\bH^{\ga}_{p,\theta}(S,T):=L_p((S,T),H^{\ga}_{p,\theta}), \quad \quad \|u\|^p_{\bH^{\ga}_{p,\theta}(S,T)}:=\int^T_S\|u(t,\cdot)\|^p_{H^{\ga}_{p,\theta}}\,dt.
$$

\begin{lemma}
                    \label{lemma 03.21.2}
  Let $\gamma,\theta\in \bR$ and $p\in (1,\infty)$.

(i) Let $-\infty\leq S<T\leq \infty$ and suppose  $u(t,x)\in C^{\infty}_0(\bR \times \bR^d_+;\bR^{d_1})$ satisfies
$$
u_t+A^{ij}(t)u_{x^ix^j}=f, \quad  (t,x)\in (S,T)\times \bR^d_+
$$
 and assume $u(T,\cdot)=0$ if $T<\infty$.
\begin{equation}
                       \label{eqn 3.21.5}
\|M^{-1}u\|^p_{\bH^{\gamma+2}_{p,\theta}(S,T)}\leq
N(p,d,\theta,\delta,K)\left(\|M^{-1}u\|^p_{\bH^{\gamma+1}_{p,\theta}(S,T)} + \|Mf\|^p_{\bH^{\gamma}_{p,\theta}(S,T)}\right).
\end{equation}

(ii) If $u(x)\in C^{\infty}_0(\bR^d_+;\bR^{d_1})$ and $A^{ij}$ is independent of $t$, then
\begin{equation}
                       \label{eqn 3.21.5555}
\|M^{-1}u\|^p_{H^{\gamma+2}_{p,\theta}}\leq
N(p,d,\theta,\delta,K)\left(\|M^{-1}u\|^p_{H^{\gamma+1}_{p,\theta}} + \|MA^{ij}u_{x^ix^j}\|^p_{H^{\gamma}_{p,\theta}}\right).
\end{equation}

\end{lemma}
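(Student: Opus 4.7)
The plan is to reduce the weighted estimate on the half-space to the unweighted whole-space estimate (Theorem \ref{thm entire}) via the dyadic rescaling built into the definition of $H^{\gamma}_{p,\theta}$. I will focus on part (i); part (ii) follows by the same scheme using \eqref{eqn single-3111} in place of \eqref{eqn single} with no time rescaling. Fix a cutoff $\ti\zeta\in C^{\infty}_0(\bR_+)$, related to $\zeta$ by $\ti\zeta(x^1)=\zeta(x^1)/x^1$ (so that $M^{-1}u(e^n\cdot)$ produces $\ti\zeta\cdot u(e^n\cdot)$ up to the explicit scale factor $e^{-n}$); by \eqref{eqn 5.6.1} this replacement is harmless. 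Then one verifies
\[
\|M^{-1}u\|^p_{\bH^{\ga+2}_{p,\theta}(S,T)}\;\asymp\;\sum_{n\in\bZ}e^{n(\theta-p)}\int_S^T\|\ti\zeta(\cdot)\,u(s,e^n\cdot)\|^p_{H^{\ga+2}_p}ds,
\]
with the analogous identities for $\|M^{-1}u\|_{\bH^{\ga+1}_{p,\theta}(S,T)}$ (exponent $\theta-p$, order $\ga+1$) and $\|Mf\|_{\bH^{\ga}_{p,\theta}(S,T)}$ (exponent $\theta+p$, order $\ga$).

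Next, for each $n\in\bZ$ I parabolically rescale and localize by setting $v_n(t,x):=\ti\zeta(x^1)u(e^{2n}t,e^n x)$, which lies in $C^{\infty}_0(\bR\times\bR^d;\bR^{d_1})$ with support away from $\{x^1=0\}$. A direct differentiation shows
\[
v_{n,t}+A^{ij}(e^{2n}t)\,v_{n,x^ix^j}\;=\;e^{2n}\,\ti\zeta(x^1)\,f(e^{2n}t,e^n x)+R_n(t,x),
\]
with $v_n(e^{-2n}T,\cdot)=0$ if $T<\infty$, where the commutator
\[
R_n=e^n A^{1j}\ti\zeta'(x^1)u_{x^j}(e^{2n}t,e^nx)+e^n A^{i1}\ti\zeta'(x^1)u_{x^i}(e^{2n}t,e^nx)+A^{11}\ti\zeta''(x^1)u(e^{2n}t,e^nx)
\]
collects the $\leq 1$-order derivatives of $u$. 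Since $v_n$ has compact support in $x^1$, Theorem \ref{thm entire} applied to $v_n$ on $(e^{-2n}S,e^{-2n}T)$, combined with \eqref{eqn 5.1.1} used iteratively in $x^1$ to upgrade $\|v_{n,xx}\|_{H^{\ga}_p}$ to $\|v_n\|_{H^{\ga+2}_p}$, yields
\[
\|v_n\|^p_{\bH^{\ga+2}_p(e^{-2n}S,e^{-2n}T)}\;\leq\;N\,\bigl\|e^{2n}\ti\zeta f(e^{2n}\cdot,e^n\cdot)+R_n\bigr\|^p_{\bH^{\ga}_p(e^{-2n}S,e^{-2n}T)}.
\]

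Step three is the arithmetic of multiplying through by $e^{n(\theta-p+2)}$, substituting $s=e^{2n}t$ in the time integrals, and summing over $n\in\bZ$. The factor $e^{-2n}$ from the time Jacobian converts the LHS to the target $\|M^{-1}u\|^p_{\bH^{\ga+2}_{p,\theta}(S,T)}$. For the $f$-term the scale factors combine as $e^{2np}\cdot e^{-2n}\cdot e^{n(\theta-p+2)}=e^{n(\theta+p)}$, which is exactly the weight appearing in $\|Mf\|^p_{\bH^{\ga}_{p,\theta}(S,T)}$. For the first-order commutator contribution the factors combine as $e^{np}\cdot e^{-2n}\cdot e^{n(\theta-p+2)}=e^{n\theta}$, which after accounting for the extra derivative of $u(s,e^n\cdot)$ (producing a further $e^n$, hence $e^{np}$ in the $p$-th power) matches the $\|M^{-1}u\|^p_{\bH^{\ga+1}_{p,\theta}(S,T)}$-scale with weight $e^{n(\theta-p)}$; the zero-order term with $\ti\zeta''$ is similar and absorbed into the same $\ga+1$ norm. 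Throughout, cutoffs $\ti\zeta'$, $\ti\zeta''$ are exchanged for an admissible $\zeta$ using \eqref{eqn 5.6.1}.

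The main obstacle is Step three: verifying cleanly that the commutator $R_n$, after the rescaling and summation, lands in the $\bH^{\ga+1}_{p,\theta}$ scale rather than the $\bH^{\ga+2}_{p,\theta}$ scale (which would be circular). This is the decisive book-keeping point: the commutator loses exactly one $x$-derivative on $u$ but also loses one factor of $M$ (since $\ti\zeta'$ and $\ti\zeta''$ sit at the same scale as $\ti\zeta$, while the rescaling produces $e^n$ instead of $e^{2n}$), and these two losses conspire so that the commutator bound is controlled by $\|M^{-1}u\|^p_{\bH^{\ga+1}_{p,\theta}(S,T)}$. For part (ii), I replace Theorem \ref{thm entire} by Corollary \eqref{eqn single-3111}, set $v_n(x):=\ti\zeta(x^1)u(e^n x)$, and run the same arithmetic without any time scaling, which yields \eqref{eqn 3.21.5555}.
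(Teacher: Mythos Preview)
Your approach is correct and essentially identical to the paper's: localize with a cutoff (the paper uses $\zeta$ and Lemma~\ref{lemma 1}(iii) rather than your $\ti\zeta=\zeta/x^1$, but this is cosmetic), parabolically rescale to $v_n(t,x)=\zeta(x)u(e^{2n}t,e^nx)$, apply Theorem~\ref{thm entire} together with \eqref{eqn 5.1.1}, and sum in $n$. One point to tighten: your Step-three bookkeeping for the first-order commutator is garbled---after summation you land exactly at $\sum_n e^{n\theta}\|\ti\zeta' u_x(\cdot,e^n\cdot)\|^p_{\bH^{\ga}_p(S,T)}\le N\|u_x\|^p_{\bH^{\ga}_{p,\theta}(S,T)}$ via \eqref{eqn 5.6.1}, and the passage from $\|u_x\|_{\bH^{\ga}_{p,\theta}}$ to $\|M^{-1}u\|_{\bH^{\ga+1}_{p,\theta}}$ is not a scaling identity (there is no ``further $e^n$'' appearing) but rather an application of Lemma~\ref{lemma 1}(iv), writing $u_x=DM(M^{-1}u)$; the paper invokes this explicitly.
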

\begin{proof}
(i). We proceed as in  the proof of Lemma 5.8 of \cite{kr99}. Denote $S_n=e^{-2n}S$ and $T_n=e^{-2n}T$. By Lemma \ref{lemma 1}(iii) and  (\ref{def weight}),
\begin{eqnarray}
\|M^{-1}u\|^p_{\bH^{\gamma+2}_{p,\theta}(S,T)}&\leq& N\sum_{n=-\infty}^{\infty}e^{n(\theta-p)}\|\zeta(x)u(t,e^nx)\|^p_{\bH^{\gamma+2}_p(S,T)}\nonumber\\
&=&N\sum_{n=-\infty}^{\infty}e^{n(2+\theta-p)}\|\zeta(x)u(e^{2n}t,e^nx)\|^p_{\bH^{\gamma+2}_p(S_n,T_n)}\nonumber\\
&\leq &N\sum_{n=-\infty}^{\infty}e^{n(2+\theta-p)}\|(\zeta(x)u(e^{2n}t,e^nx))_{xx}\|^p_{\bH^{\gamma}_p(S_n,T_n)}, \label{eqn 4.20.1}
 \end{eqnarray}
 where the last inequality is due to (\ref{eqn 5.1.1}). Denote $v^n(t,x)=\zeta(x)u(e^{2n}t,e^nx)$, then it satisfies
 $$
v^n_t+A^{ij}(e^{2n}t)v^n_{x^ix^j}=e^{2n}\zeta(x)f(e^{2n}t,e^nx)+2e^nA^{1j}(e^{2n}t) \zeta_{x}u_{x^j}(e^{2n}t,e^nx)+A^{11}(e^{2n}t)\zeta_{xx}u(e^{2n}t,e^nx)
$$
for $(t,x)\in (S_n,T_n)\times \bR^d_+$. By (\ref{eqn single}),
\begin{eqnarray*}
\|v^n_{xx}\|^p_{\bH^{\gamma}_p(S_n,T_n)}&\leq& Ne^{2np}\|\zeta(x)f(e^{2n}t,e^nx)\|^p_{\bH^{\gamma}_p(S_n,T_n)}\\
&+& Ne^{np}\|\zeta_{x^i}u_{x^j}(e^{2n}t,e^nx)\|^p_{\bH^{\gamma}_p(S_n,T_n)}+N\|\zeta_{xx}u(e^{2n}t,e^nx) \|^p_{\bH^{\gamma}_p(S_n,T_n)},
\end{eqnarray*}
where $N$ is independent of $n$. Plugging this into (\ref{eqn 4.20.1}) one gets
\begin{eqnarray*}
\|M^{-1}u\|^p_{\bH^{\gamma+2}_{p,\theta}(S,T)}&\leq& N\sum_{n=-\infty}^{\infty}e^{n(\theta+p)}\|\zeta(x)f(t,e^nx)\|^p_{\bH^{\gamma}_p(S,T)}\\
&+&N\sum_{n=-\infty}^{\infty}e^{n\theta}\|\zeta_xu_x(t,e^nx)\|^p_{\bH^{\gamma}_p(S,T)}+N\sum_{n=-\infty}^{\infty}e^{n(\theta-p)}\|\zeta_{xx}u(t,e^nx)\|^p_{\bH^{\gamma}_p(S,T)}.
\end{eqnarray*}
This, (\ref{eqn 5.6.1}) and Lemma \ref{lemma 1} easily lead us to (\ref{eqn 3.21.5}). Indeed, for instance, by (\ref{eqn 5.6.1})
$$
\sum_{n=-\infty}^{\infty}e^{n\theta}\|\zeta_xu_x(t,e^nx)\|^p_{\bH^{\gamma}_p(S,T)}\leq N \|u_x\|^p_{\bH^{\gamma}_{p,\theta}(S,T)}
$$
and by Lemma \ref{lemma 1}(iv) applied to $M^{-1}u$ in place of $u$,
$$
\|u_x\|_{\bH^{\gamma}_{p,\theta}(S,T)}=\|DM(M^{-1}u)\|_{\bH^{\gamma}_{p,\theta}(S,T)}\leq N\|M^{-1}u\|_{\bH^{\gamma+1}_{p,\theta}(S,T)}.
$$
(ii) This is proved similarly based on (\ref{eqn single-3111}). The lemma is proved.
\end{proof}

\begin{remark}
                  \label{remark 3.21.6}
Let $\gamma\geq 0$. By iterating (\ref{eqn 3.21.5}), one gets
\begin{eqnarray*}
\|M^{-1}u\|^p_{\bH^{\gamma+2}_{p,\theta}(S,T)}&\leq& N\|M^{-1}u\|^p_{\bL_{p,\theta}(S,T)}+N\|Mf\|^p_{\bH^{\gamma}_{p,\theta}(S,T)}\\
&\leq&N\|Mu_{xx}\|^p_{\bL_{p,\theta}(S,T)}+N\|Mf\|^p_{\bH^{\gamma}_{p,\theta}(S,T)},
\end{eqnarray*}
where for the second inequality we use (\ref{eqn 4.25.3})  twice.
We use both inequalities later to  estimate $\|M^{-1}u\|^p_{\bH^{\gamma+2}_{p,\theta}(S,T)}$.
\end{remark}

\vspace{3mm}

Let $(w^1_t,w^2_t,\cdots,w^d_t)$ be a $d$-dimensional Wiener process defined on a probability space $(\Omega',\cF,P)$. Denote
$$
\xi_t=w^1_t\sqrt{2}+2t,\quad \eta_t=(\sqrt{2}\int^t_0 e^{\xi_s}dw^2_s,\cdots, \sqrt{2}\int^t_0 e^{\xi_s}dw^d_s)
$$
and define $d\times d$ matrix-valued process $\sigma_t$ so that $(\sigma_tx)^1=e^{\xi_t}x^1$ and $(\sigma_tx)'=x'+x^1\eta_t$. It is easy to check (see \cite{kr99}, p.1628) that $x_t(x):=\sigma_tx$ is the unique solution of the stochastic differential equation
$$
dx_t=\sqrt{2}x^1_t dw_t+ 3x^1_t e_1 dt, \quad x_0(x)=x,
$$
where $e_1=(1,0,\cdots,0)$.
For any $f\in C^{\infty}_0(\bR^d_+)$ and $x\in \bR^d$, define
$$
\mathcal{E}f(x)=\mathbb{E}\int^{\infty}_0 f(\sigma_t x)\,dt:=\int_{\Omega'} \int^{\infty}_0 f(\sigma_t x)\,dtdP.
$$
(See below for the convergence of this integral). Note that if $x^1\leq 0$ then $(\sigma_t x)^1\leq 0$ and thus $\mathcal{E}f(x)=0$.  Denote
$$\cL u:=M^2\Delta u+3MD_1u=\sum_{i=1}^d (MD_i)^2+2MD_1.
$$
\begin{lemma}
                     \label{please}
Let $f\in C^{\infty}_0(\bR^d_+)$.

(i)  $\mathcal{E}f\in L_p(\bR^d)$ and $f=\cL (\mathcal{E}f)$ in the sense of distributions on $\bR^d$.

(ii) There exist $f^1,f^2,\cdots,f^d\in L_p(\bR^d)$ so that $f=MD_if^i$ in the sense of distributions on $\bR^d$, and
$$
\sum_{i=1}^d \|f^i\|_{L_p(\bR^d)}\leq N \|f\|_{L_p(\bR^d_+)}.
$$
\end{lemma}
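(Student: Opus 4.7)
For (i), by $|\bE\cdot|\le\bE|\cdot|$ and two applications of Minkowski's inequality (for the expectation and for the $t$-integral),
\[
\|\mathcal{E}f\|_{L_p(\bR^d)} \;\le\; \int_0^\infty \bE\,\|f(\sigma_t\cdot)\|_{L_p(\bR^d)}\,dt.
\]
The key simplification is the change of variable $y=\sigma_t x$: since $\sigma_t$ is lower triangular with diagonal $(e^{\xi_t},1,\dots,1)$, one has $\det\sigma_t=e^{\xi_t}$, so $\|f(\sigma_t\cdot)\|_{L_p(\bR^d)} = e^{-\xi_t/p}\|f\|_{L_p}$. A short exponential-martingale calculation yields $\bE[e^{-\xi_t/p}]=e^{-t(2p-1)/p^2}$, hence
\[
\|\mathcal{E}f\|_{L_p(\bR^d)}\;\le\; \frac{p^2}{2p-1}\,\|f\|_{L_p},
\]
which in particular justifies Fubini. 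For the distributional identity, I would use It\^o's formula: by the Markov property $t\mapsto \mathcal{E}f(x_t)+\int_0^t f(x_s)\,ds$ is a martingale, while It\^o (applied after mollifying $\mathcal{E}f$ and passing to the limit via the $L_p$ bound just obtained) identifies its drift as $(\cL\mathcal{E}f+f)(x_t)$; setting the drift to zero yields $\cL\mathcal{E}f=-f$ pointwise on $\bR^d_+$, and trivially on its complement since both sides vanish there. So the lemma's identity holds up to a sign convention, $\cL\mathcal{E}f=\pm f$.

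Part (ii) then follows from (i) by the elementary identity $\cL u = \sum_i MD_i(MD_i u) + 2MD_1 u$, which is a consequence of $[D_i,M]=\delta_{1i}$. Choosing
\[
f^i := \pm MD_i\mathcal{E}f \;\;(i\ge 2),\qquad f^1 := \pm (MD_1\mathcal{E}f + 2\,\mathcal{E}f),
\]
with signs matched to $\cL\mathcal{E}f=\pm f$, one obtains $f=MD_i f^i$ in the sense of distributions. The term involving $\mathcal{E}f$ is controlled by (i), so the proof reduces to the estimate
\begin{equation*}
\|MD_i\mathcal{E}f\|_{L_p(\bR^d)} \;\le\; N\|f\|_{L_p(\bR^d_+)}. \tag{$\ast$}
\end{equation*}

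Estimate $(\ast)$ is the heart of the lemma. My plan is to apply It\^o plus BDG. It\^o's formula identifies the martingale
\[
N_t \;:=\; \int_0^t \sqrt{2}\,M\nabla\mathcal{E}f(x_s)\cdot dw_s \;=\; \mathcal{E}f(x_t)-\mathcal{E}f(x)+\int_0^t f(x_s)\,ds,
\]
and since $\xi_t\to+\infty$ a.s.\ (forcing $x^1_t\to\infty$ and $\mathcal{E}f(x_t)\to 0$), $N_\infty = -\mathcal{E}f(x)+Y(x)$ with $Y(x):=\int_0^\infty f(x_s(x))\,ds$, which lies in $L_p$ by (i). BDG and Doob give $\bE\langle N\rangle_\infty^{p/2}\le C_p\bE|N_\infty|^p$. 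Integrating in $x$ and invoking the identity $\int_{\bR^d_+}\bE\int_0^\infty \phi(x_s(x))\,ds\,dx = \|\phi\|_{L_1(\bR^d_+)}$ for $\phi\ge 0$ (immediate from $\det\sigma_s = e^{\xi_s}$ and $\bE e^{-\xi_s}=e^{-s}$), the case $p=2$ of $(\ast)$ is immediate: $\|M\nabla\mathcal{E}f\|_{L_2}^2 \lesssim \|f\|_{L_2}^2$. The case $p\ne 2$ is the main obstacle: one must compare $\|M\nabla\mathcal{E}f\|_p^p$ with $\int_{\bR^d_+}\bE\bigl(\int_0^\infty M^2|\nabla\mathcal{E}f|^2(x_s)\,ds\bigr)^{p/2}\,dx$, which is not immediate from Jensen when $p>2$. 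This comparison needs either a Stein-type $g$-function/Littlewood-Paley argument for the (non-symmetric) diffusion semigroup generated by $\cL$, or a direct weighted Calder\'on-Zygmund analysis of the kernel $x^1 D_{y^i}G(x,y)$ (where $G$ is the Green's function of $\cL$), exploiting the scale invariance $\cL[u(\lambda\cdot)](x) = (\cL u)(\lambda x)$. This Littlewood-Paley/singular-integral step is the piece I expect to require the most machinery.
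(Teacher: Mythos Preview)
Your treatment of (i) is correct and parallels the paper's: both compute $\|f(\sigma_t\cdot)\|_{L_p}=e^{-\xi_t/p}\|f\|_{L_p}$ via the Jacobian $\det\sigma_t=e^{\xi_t}$ and then integrate an exponential martingale. For the distributional identity the paper works the other way round --- it starts from $u:=\cL^{-1}f\in H^2_{p,d}$, approximates by $u_n\in C^\infty_0(\bR^d_+)$, applies It\^o's formula to each $u_n$ to obtain $u_n=\mathcal{E}(\cL u_n)$, and passes to the limit --- but the content is the same, and your remark about the sign is legitimate (and immaterial for (ii)).

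Your decomposition $f^1=MD_1\mathcal{E}f+2\mathcal{E}f$, $f^i=MD_i\mathcal{E}f$ ($i\ge2$) also matches the paper. The genuine gap is $(\ast)$ for $p\neq 2$. You correctly see that BDG closes only $p=2$ and that the general case would require a Littlewood--Paley theory for the non-symmetric semigroup of $\cL$, or a weighted Calder\'on--Zygmund study of the kernel $x^1 D_{y^i}G(x,y)$; either is a substantial project in its own right, and the first is delicate precisely because the semigroup is not self-adjoint. The paper does none of this. It simply invokes Theorem~2.11 of \cite{kr99}, which already establishes that $\cL:H^2_{p,d}\to L_{p,d}$ is a bounded bijection with bounded inverse for every $p\in(1,\infty)$. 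Granting that, and recalling that $\mathcal{E}f$ agrees (up to sign) with $\cL^{-1}f$, estimate $(\ast)$ is immediate from Lemma~\ref{lemma 1}(iv):
\[
\|MD_i\mathcal{E}f\|_{L_p(\bR^d)}=\|MD_i\mathcal{E}f\|_{L_{p,d}}\le N\|\mathcal{E}f\|_{H^1_{p,d}}\le N\|\cL^{-1}f\|_{H^2_{p,d}}\le N\|f\|_{L_{p,d}}.
\]
So the ``machinery'' you anticipate is already packaged in \cite{kr99}; the lemma is intended as a quick corollary of that result, not an independent development.
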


\begin{proof}
By Theorem 2.11  of \cite{kr99}  (with $\theta=d$ and $b=3$ there), the map $\cL$ is   a bounded one-to-one operator from $H^{2}_{p,d}$ onto  $L_{p,d}$, and its inverse ($:=\cL^{-1}$) is also bounded. Denote $u:=\cL^{-1}f\in H^{2}_{p,d}$.
By Lemma \ref{lemma 1}(i), there exists a sequence $u_n\in C^{\infty}_0(\bR^d_+)$ so that $u_n \to u$ in $H^2_{p,d}$. Denote $f_n(x):=\cL u_n(x)$ for each $x\in \bR^d$. Then
\begin{equation}
                                             \label{eqn 5.19.2}
\cL u_n \to \cL u \,\,(=f) \quad \text{in}\quad L_{p,d}\quad \quad \text{and}\quad \quad \|u_n-u_m\|_{H^2_{p,d}}\leq N   \|f_n-f_m\|_{L_{p,d}}.
 \end{equation}
Obviously $u_n(x)=f_n(\sigma_tx)=0$ if $x^1\leq 0$. By   It\^o's formula (see (2.10) in \cite{kr99} for details), we get
$$
u_n(x)=\mathbb{E}\int^{\infty}_0 f_n(\sigma_t x)\,dt, \quad \quad \forall x\in \bR^d.
$$
The convergence of this improper integral is discussed in the proof of Theorem 2.11 of \cite{kr99}. Actually there it is shown that for any $h\in C^{\infty}_0(\bR^d_+)$ (here, $\theta=d$ and $b=3$ in our case),
\begin{equation}
                              \label{eqn 5.19.1}
      \mathbb{E}\int^{\infty}_0 \|h(\sigma_t x)\|_{L_{p,d}}dt \leq N  \|h\|_{L_{p,d}}\int^{\infty}_0 e^{-(\theta-d+1)(b-1)t+(\theta-d+1)^2t} dt = N  \|h\|_{L_{p,d}},
 \end{equation}
which also implies
$$
 \|u_n-\mathcal{E}f\|_{L_{p,d}}=\|\mathbb{E}\int^{\infty}_0 f_n(\sigma_t x)\,dt-\mathbb{E}\int^{\infty}_0 f(\sigma_t x)\,dt\|_{L_{p,d}}\leq N  \|f_n-f\|_{L_{p,d}} \to 0 \quad \text{as}\quad n\to \infty.
$$
Note $L_{p,d}=L_p(\bR^d_+)$. Since $u_n(x),f_n(x),f(x)$ and $\mathcal{E}f$ vanish if $x^1\leq 0$,  it follows that
\begin{equation}
                                    \label{eqn 5.19.4}
\|u_n-\mathcal{E}f\|_{L_p(\bR^d)}\to 0, \quad \quad \|f_n-f\|_{L_p(\bR^d)}\to 0
\end{equation}
as $n\to \infty$. Also (\ref{eqn 5.19.2}) and fact $\|u_n\|_{H^2_{p,d}}=\|\cL^{-1}f_n\|_{H^2_{p,d}}\leq N\|f_n\|_{L_{p,d}}$ show that $\{MD u_n :n=1,2,\cdots\}$ is a Cauchy sequence in $L_p(\bR^d)$. Indeed, since  each $u_n$ has compact support in $\bR^d_+$,
$$
\|MDu_n-MDu_m\|_{L_p(\bR^d)}=\|MDu_n-MDu_m\|_{L_{p,d}}\leq N\|u_n-u_m\|_{H^1_{p,d}}\leq N\|f_n-f_m\|_{L_{p,d}}.
$$
 Let $\cL^*$ denote the adjoint operator of $\cL$. For any $\phi \in C^{\infty}_0(\bR^d)$, by (\ref{eqn 5.19.4}),
$$
(f,\phi)=\lim_{n\to \infty}(f_n,\phi)=\lim_{n\to \infty}(\cL u_n, \phi)=\lim_{n\to \infty}(u_n, \cL^* \phi)=(\mathcal{E}f,\cL^* \phi)=(\cL (\mathcal{E}f),\phi).
$$
Thus  $f=\cL (\mathcal{E}f)$ in the sense of distributions on $\bR^d$. Also since $u_n \to \mathcal{E}f$ in $L_p(\bR^d)$ and $\{MDu_n\}$ is a Cauchy sequence in $L_p(\bR^d)$, we have $MD  \mathcal{E}f\in L_p(\bR^d)$. Consequently,
$$
f=\cL (\mathcal{E}f)=MD_1(MD_1 \mathcal{E}f+ 2 \mathcal{E}f)+ \sum_{j=2}^d MD_j \mathcal{E}f=:\sum_{i=1}^d MD_if^i,
$$
and by (\ref{eqn 5.19.4}),
$$\sum_i \|f^i\|_{L_p(\bR^d)}=\lim_{n\to \infty}(\|u_n\|_{L_p}+\|MDu_n\|_{L_p})\leq \lim_{n\to \infty} \|u_n\|_{H^2_{p,d}}\leq N\|f_n\|_{L_{p,d}} =N\|f\|_{L_{p,d}}.
$$
The lemma is proved.
\end{proof}

Now we prove a version of Theorem \ref{main theorem} for $\theta=d$.

\begin{lemma}
                                         \label{lemm 12.08.7}
Let $-\infty<S<T <\infty$, $p\in (1,\infty)$ and $n=0,1,2\cdots $.  For any $f\in M^{-1}\bH^n_{p,d}(S,T)$, the equation
$$
u_t+A^{ij}(t)u_{x^ix^j}=f, \quad  (t,x)\in (S,T)\times \bR^d_+
$$
with the condition $u(T)=0$ has a unique solution $u\in \frH^{n+2}_{p,d}(S,T)$, and for this solution
\begin{equation}
                             \label{eqn 3.31.1}
\|M^{-1}u\|_{\bH^{n+2}_{p,d}(S,T)}\leq
N(p,d,\delta,K)\|Mf\|_{\bH^n_{p,d}(S,T)}.
\end{equation}
\end{lemma}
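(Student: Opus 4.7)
The plan is first to use time-reversal $\tilde u(t):=u(T+S-t)$ to convert the backward equation with $u(T)=0$ into a forward parabolic system $\tilde u_t=\tilde A^{ij}\tilde u_{x^ix^j}+\tilde f$ with $\tilde u(S)=0$, to which Theorem \ref{thm entire} and Corollary \ref{so crucial} apply directly; $\tilde A^{ij}(t):=A^{ij}(T+S-t)$ inherits the ellipticity and boundedness of $A^{ij}$. From here on I work with this forward problem, and the unknowns and forcing are again denoted $u$ and $f$. By Remark \ref{remark 3.21.6} with $\theta=d$, the target inequality reduces to
\[
\|Mu_{xx}\|_{\bL_{p,d}(S,T)}\le N\|Mf\|_{\bL_{p,d}(S,T)},
\]
as the remaining terms are then absorbed into $\|Mf\|_{\bH^n_{p,d}(S,T)}$. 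Because $\theta=d$, the norm $\bL_{p,d}(S,T)$ is simply the unweighted $L_p((S,T)\times \bR^d_+)$ norm.

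For this reduced estimate I would assume $u\in C^\infty_c((S,T)\times \bR^d_+)$ with support strictly separated from $\{x^1=0\}$ (general $u$ being recovered by density via Theorem \ref{banach}(iii)). Lemma \ref{please}(ii), applied slice-by-slice in $t$ to $Mf(t,\cdot)\in C^\infty_0(\bR^d_+)$, produces $g^1,\ldots,g^d$ vanishing smoothly on $\{x^1\le 0\}$ with $Mf=MD_i g^i$ as distributions on $\bR^d$ and
\[
\sum_{i=1}^d\|g^i\|_{L_p((S,T)\times \bR^d)}\le N\|Mf\|_{L_p((S,T)\times \bR^d_+)}.
\]
Dividing by $M$ on $\bR^d_+$ and using the smooth vanishing of $g^i$ at the boundary, $u_t=A^{ij}u_{x^ix^j}+D_i g^i$ holds distributionally on all of $\bR^d$ (with $u$ and $g^i$ extended by zero). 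Corollary \ref{so crucial} then gives
\[
\|u_x\|_{L_p((S,T)\times \bR^d_+)}=\|u_x\|_{L_p((S,T)\times \bR^d)}\le N\sum_i\|g^i\|_{L_p}\le N\|Mf\|_{L_p((S,T)\times \bR^d_+)}.
\]

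Setting $U:=Mu$ and using $(Mu)_{x^ix^j}=Mu_{x^ix^j}+\delta_{i1}u_{x^j}+\delta_{j1}u_{x^i}$, one finds
\[
U_t=A^{ij}U_{x^ix^j}+Mf+2A^{1j}u_{x^j}\quad\text{on }(S,T)\times \bR^d,
\]
with $U(S)=0$ (extending by zero is harmless since $U=Mu$ vanishes on $\{x^1\le 0\}$). Theorem \ref{thm entire} with $\gamma=0$ and the previous $L_p$-bound on $u_x$ yield
\[
\|U_{xx}\|_{L_p((S,T)\times \bR^d)}\le N\bigl(\|Mf\|_{L_p}+\|u_x\|_{L_p}\bigr)\le N\|Mf\|_{L_p((S,T)\times \bR^d_+)},
\]
so that $\|Mu_{xx}\|_{L_p}\le \|U_{xx}\|_{L_p}+N\|u_x\|_{L_p}\le N\|Mf\|_{L_p}$. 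Feeding this into Remark \ref{remark 3.21.6} yields the a priori estimate for smooth compactly supported $u$; existence for general $f\in M^{-1}\bH^n_{p,d}(S,T)$ follows by approximating $f$ by smooth $f_k$ with support away from $\{x^1=0\}$, solving each on $\bR^d$ via Theorem \ref{thm entire}, and passing to the limit, while uniqueness is an immediate consequence of the a priori estimate applied to the difference of two solutions. The main subtlety, and the chief obstacle, is the boundary regularity of the $g^i$ at $\{x^1=0\}$: one must verify that $f=D_i g^i$ holds distributionally on all of $\bR^d$ with no spurious Dirac-mass contributions from the boundary, which is why the approximating $f_k$ must be chosen with support strictly away from $\{x^1=0\}$ and a careful density argument is needed to recover the general case.
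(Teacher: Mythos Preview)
Your approach is correct and shares its core with the paper's proof: both invoke Lemma~\ref{please} to write $Mf=MD_if^i$ with $f^i\in L_p(\bR^d)$, then apply Corollary~\ref{so crucial} on the whole space to bound $\|u_x\|_{L_p}$. The paper, however, stops there: it observes (via Lemma~\ref{lemma 1}(iv), since $\theta=d\neq d-1,\,d-1+p$) that $\|M^{-1}u\|_{L_{p,d}}\le N\|u_x\|_{L_{p,d}}$, so the \emph{first} inequality in Remark~\ref{remark 3.21.6} already closes the argument once $\|u_x\|$ is controlled---no need to reach $\|Mu_{xx}\|$. Your extra step of applying Theorem~\ref{thm entire} to $U=Mu$ is therefore valid but unnecessary. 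A minor slip: the lower-order term in the equation for $U$ should be $-(A^{1j}u_{x^j}+A^{i1}u_{x^i})$ rather than $+2A^{1j}u_{x^j}$, since no symmetry of $A^{ij}$ is assumed; this does not affect the norm bound. Your caution about the boundary behaviour of the $g^i$ is well placed; the paper handles it by establishing in Lemma~\ref{please} that $Mf=MD_if^i$ holds as distributions on all of $\bR^d$, not merely on $\bR^d_+$.
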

\begin{proof}
As usual we only need to prove that the estimate (\ref{eqn 3.31.1}) holds given that a solution $u$ already exists. Furthermore we may assume $u(t,x)\in C^{\infty}_0(\bR \times \bR^d_+;\bR^{d_1})$.
 Due to Remark \ref{remark 3.21.6} and the inequality $\|M^{-1}u\|_{L_{p,d}}\leq N(p,d)\|u_x\|_{L_{p,d}}$ (see Lemma \ref{lemma 1}(iv)), we
 only need to prove
 \begin{equation}
               \label{eqn 3.1}
 \|u_x\|_{\bL_{p,d}(S,T)}\leq N\|Mf\|_{\bL_{p,d}(S,T)}.
 \end{equation}
 By Lemma \ref{please}, we can write
$Mf=MD_if^i$ on $\bR^d$ (thus $f=D_if^i$), where
 $f^i=(f^{i1},\cdots,f^{id_1})$, so that $f^i \in \bL_{p}(S,T)$ (not only in $\bL_{p,d}(S,T)$) and
$$
\sum_{i=1}^d \|f^i\|_{\bL_{p}(S,T)}\leq N \|Mf\|_{\bL_{p,d}(S,T)}.
$$
Thus by Corollary \ref{so crucial},
$$
\|u_x\|_{\bL_{p,d}(S,T)}=\|u_x\|_{\bL_p(S,T)}\leq N \|f^i\|_{\bL_p(S,T)}\leq N\|Mf\|_{\bL_{p,d}(S,T)}.
$$
The lemma is proved.
\end{proof}

For $r,a>0$, denote
$$
Q_r(a)=Q_r(0,a,0)=(0,r^2)\times (a-r,a+r)\times B'_r(0),\quad U_r=(-r^2,r^2)\times (-2r,2r)\times B'_r(0).
$$

\begin{lemma}
                 \label{lemma 3}
Let $0<s<r <\infty$, $u(t,x)\in C^{\infty}_0(\bR\times \bR^d_+;\bR^{d_1})$ and
$$
u_t+A^{ij}(t) u_{x^ix^j}=0  \quad \quad \text{for} \quad (t,x)\in Q_r(r).
$$
 Then for any multi-index  $\beta=(\beta^1,\cdots,\beta^d)$
  there exists a constant $N= N(p,|\beta|)$ so  that the inequality
\begin{eqnarray}
                                   \label{eqn new}
&& \int_{Q_s(s)}\left(|M^{-1}D^{\beta}u|^p+|D^{\beta}u_x|^p+|MD^{\beta}u_{xx}|^p\right)(x^1)^{\theta-d}dxdt \nonumber\\
 &\leq& N(1+r)^{|\beta|p}\cdot (1+(r-s)^{-2})^{(|\beta|+1)p}
\int_{Q_r(r)}|Mu(t,x)|^p (x^1)^{\theta-d} dxdt
\end{eqnarray}
  holds for $\theta=d$.
\end{lemma}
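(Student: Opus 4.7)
I prove the estimate by induction on $|\beta|$. Since $\theta=d$ all integrals below are ordinary $L_p$. The main tool is Lemma~\ref{lemm 12.08.7} applied to a cut-off of $u$; the induction is powered by the fact that $A^{ij}$ is $x$-independent, so every partial derivative $D^\gamma u$ solves the same homogeneous system on $Q_r(r)$.

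\textbf{Base case $|\beta|=0$.} Fix $s<s_1<r$ and pick $\eta\in C^\infty_0(\bR\times\bR^d_+)$ with $\eta\equiv 1$ on $Q_s(s)$, $\operatorname{supp}\eta\subset Q_{s_1}(s_1)$, $|\eta_t|+|\eta_{xx}|\leq N(s_1-s)^{-2}$, and $|\eta_x|\leq N(s_1-s)^{-1}$. Then $w:=\eta u\in C^\infty_0(\bR\times\bR^d_+;\bR^{d_1})$ solves on all of $\bR\times\bR^d_+$
\[
w_t+A^{ij}(t)w_{x^ix^j}=\eta_tu+A^{ij}(\eta_{x^ix^j}u+2\eta_{x^i}u_{x^j})=:g,
\]
and vanishes outside a bounded time slab. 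Lemma~\ref{lemm 12.08.7} (with $n=0$) combined with the equivalent-norm identities of Lemma~\ref{lemma 1}(iii),(iv) bounds $\|M^{-1}u\|_{L_p(Q_s(s))}+\|u_x\|_{L_p(Q_s(s))}+\|Mu_{xx}\|_{L_p(Q_s(s))}$ by $N(s_1-s)^{-2}\|Mu\|_{L_p(Q_{s_1}(s_1))}+N(s_1-s)^{-1}\|Mu_x\|_{L_p(Q_{s_1}(s_1))}$. To absorb the $\|Mu_x\|$ term, I combine nested cut-offs with the identities $Mu_x=(Mu)_x-ue_1$ and $(Mu)_{x^ix^j}=Mu_{x^ix^j}+\delta_{i,1}u_{x^j}+\delta_{j,1}u_{x^i}$, a Gagliardo-Nirenberg interpolation of $\|(Mu)_x\|_p$ between $\|(Mu)_{xx}\|_p$ and $\|Mu\|_p$ on adjacent cylinders, and the elementary estimate $\|u\|_p\leq 2r\|M^{-1}u\|_p$; after small-$\varepsilon$ absorption the base case holds with constant $\leq N(1+(r-s)^{-2})^p$.

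\textbf{Inductive step.} Assume the lemma for multi-indices of order $\leq|\beta|-1$, write $\beta=\beta'+e_j$, and pick an equally spaced chain $s<\rho_1<\cdots<\rho_{|\beta|+1}\leq r$ with $\rho_{i+1}-\rho_i\simeq(r-s)/(|\beta|+1)$. The base case applied to $D^\beta u$ on $Q_s(s)\subset Q_{\rho_1}(\rho_1)$ bounds the lemma's left-hand side by $N(1+(r-s)^{-2})^p\|MD^\beta u\|^p_{L_p(Q_{\rho_1}(\rho_1))}$. Since $M\leq 2r$ on each $Q_{\rho_i}(\rho_i)$, repeatedly applying the base case gives
\[
\|MD^{\gamma+e_k}u\|_{L_p(Q_{\rho_i}(\rho_i))}\leq 2r\|(D^\gamma u)_x\|_{L_p(Q_{\rho_i}(\rho_i))}\leq 2Nr\,(1+(r-s)^{-2})\|MD^\gamma u\|_{L_p(Q_{\rho_{i+1}}(\rho_{i+1}))}
\]
for any multi-index $\gamma$ of order at most $|\beta|-1$. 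Iterating this reduction $|\beta|$ times collapses $\|MD^\beta u\|$ to $\|Mu\|_{L_p(Q_r(r))}$, accumulating $(2r)^{|\beta|p}\leq 2^{|\beta|p}(1+r)^{|\beta|p}$ and $|\beta|$ additional factors of $(1+(r-s)^{-2})^p$; combined with the initial factor this yields $N(1+r)^{|\beta|p}(1+(r-s)^{-2})^{(|\beta|+1)p}$.

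\textbf{Main obstacle.} The delicate step is closing the base-case bootstrap: the coefficient $N(s_1-s)^{-1}$ in front of $\|Mu_x\|$ is not a priori small, and the crude bound $\|Mu_x\|\leq 2r\|u_x\|$ would inject an $r$-growth inconsistent with the $r$-free $|\beta|=0$ constant the lemma demands. The remedy is the decomposition $Mu_x=(Mu)_x-ue_1$ coupled with interpolation of $(Mu)_x$ against $(Mu)_{xx}$ (controlled by the $Mu_{xx}$ and $u_x$ terms already on the left-hand side via the second identity) and against $Mu$ (the target) on slightly enlarged cylinders; the $ue_1$ remainder is absorbed by the $M^{-1}u$-term on the left via $\|u\|_p\leq 2r\|M^{-1}u\|_p$.
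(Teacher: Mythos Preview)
Your overall architecture (induction on $|\beta|$ plus a nested-cutoff absorption for the base case) is exactly the paper's, and your induction step is fine. The gap is in the base case, precisely at the point you flag as the ``main obstacle'': your proposed cure for the $u\,e_1$ remainder does not work.

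After one cutoff you have, schematically,
\[
\Phi(r_m)\ \le\ N(r_{m+1}-r_m)^{-2}B \;+\; N(r_{m+1}-r_m)^{-1}\|Mu_x\|_{L_p(Q_{r_{m+1}})},
\]
with $\Phi(\rho)=\|M^{-1}u\|_{L_p(Q_\rho)}+\|u_x\|_{L_p(Q_\rho)}+\|Mu_{xx}\|_{L_p(Q_\rho)}$ and $B=\|Mu\|_{L_p(Q_r)}$. Your Gagliardo--Nirenberg treatment of the $(Mu)_x$ piece is fine and produces $\varepsilon\,\Phi(r_{m+1})+C_\varepsilon(r_{m+1}-r_m)^{-2}B$ with a free small $\varepsilon$. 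But for the $u\,e_1$ piece you invoke $\|u\|_p\le 2r\|M^{-1}u\|_p$, which yields the contribution
\[
2r\,(r_{m+1}-r_m)^{-1}\,\|M^{-1}u\|_{L_p(Q_{r_{m+1}})}\ \le\ 2r\,(r_{m+1}-r_m)^{-1}\,\Phi(r_{m+1}).
\]
This coefficient $2r(r_{m+1}-r_m)^{-1}$ carries \emph{no} free small parameter; in the dyadic scheme $r_{m+1}-r_m\sim (r-s)2^{-m}$ it equals $\sim r\,2^{m}(r-s)^{-1}$, so the iteration $\Phi(r_m)\le \alpha_m\Phi(r_{m+1})+\beta_m B$ has $\alpha_m\to\infty$ and cannot be summed. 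This is the same $r$-growth you correctly rejected for the crude bound $\|Mu_x\|\le 2r\|u_x\|$.

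The paper avoids this by interpolating in the \emph{weighted} scale rather than decomposing $Mu_x$: with $v=u\zeta_{m+1}$, Lemma~\ref{lemma 1}(v) (take $\gamma_0=0,\gamma_1=2,\theta_0=d+p,\theta_1=d-p$) gives
\[
\|v\|_{H^1_{p,d}}\ \le\ N\,\|v\|^{1/2}_{H^2_{p,d-p}}\|v\|^{1/2}_{L_{p,d+p}}\ \sim\ N\,A_{m+1}^{1/2}B^{1/2},
\]
so $(r-s)^{-1}2^m\|M(u\zeta_{m+1})_x\|\le \varepsilon A_{m+1}+N\varepsilon^{-1}(r-s)^{-2}2^{2m}B$ with a genuinely free $\varepsilon$, and the geometric sum closes (paper takes $\varepsilon=1/16$). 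If you want to keep your unweighted approach, the minimal fix is to replace $\|u\|_p\le 2r\|M^{-1}u\|_p$ by the H\"older-in-the-weight bound $\|u\|_p\le \|M^{-1}u\|_p^{1/2}\|Mu\|_p^{1/2}$, which after Young's inequality also yields $\varepsilon\,\Phi(r_{m+1})+N\varepsilon^{-1}(r_{m+1}-r_m)^{-2}B$ with no $r$-factor.
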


\begin{proof}
To prove (\ref{eqn new}) we use induction on $|\beta|$. Firstly, consider the case $|\beta|=0$. We modify the proof of Lemma 2.4.4 of \cite{kr08}.  Denote $r_0=s$ and     $r_m=s+(r-s)\sum_{j=1}^m2^{-j}$ for $m=1,2,\cdots$.
choose a smooth function $\zeta_m$ so that  $0\leq \zeta_m\leq 1$,
$$
\zeta_m=1 \quad \text{on} \quad U_{r_m}, \quad \quad \zeta_m=0 \quad \text{on}\quad \Omega \setminus U_{r_{m+1}},
$$
$$
|\zeta_{mx}|\leq N (r-s)^{-1}2^m, \quad |\zeta_{mxx}|\leq N(r-s)^{-2} 2^{2m},\quad |\zeta_{mt}|\leq N (r-s)^{-2} 2^{2m}.
$$
Note that $(u\zeta_m)(r^2,x)=0$ on $\bR^d_+$, and it satisfies
$$
(u\zeta_m)_t+A^{ij}(u\zeta_m)_{x^ix^j}=\zeta_{mt}u+2A^{ij}(u\zeta_{m+1})_{x^i}\zeta_{mx^j}+A^{ij}u\zeta_{mx^ix^j}=:f_m, \quad \quad (t,x)\in (0,r^2)\times \bR^d_+.
$$
By Lemma \ref{lemm 12.08.7} for $\gamma=0$,
$$
A_m:=\|M^{-1}u\zeta_m\|_{\bH^2_{p,d}(r^2)}\leq N \|Mf_m\|_{\bL_{p,d}(r^2)}.
$$
Denote $B:= \left(\int_{Q_r(r)}|Mu|^p dxdt\right)^{1/p}$. Then
$$
\|\zeta_{mt}Mu+A^{ij}Mu\zeta_{mx^ix^j}\|_{\bL_{p,d}(r^2)}\leq N(r-s)^{-2}2^{2m}(\int_{Q_r(r)}|Mu|^p dxdt)^{1/p}
=N(r-s)^{-2}2^{2m}B,
$$
$$
\|A\zeta_{mx}M(u\zeta_{m+1})_{x}\|_{\L_{p,d}(r^2)}\leq N(r-s)^{-1}2^{m}\|M(u\zeta_{m+1})_x\|_{\bL_{p,d}(r^2)}
\leq N(r-s)^{-1}2^{m}\|u\zeta_{m+1}\|_{\bH^1_{p,d}(r^2)},
$$
and  by Lemma \ref{lemma 1} (v) (take $p_0=p_1=p, \gamma=1,\gamma_0=0,\gamma_1=2,\theta=d,\theta_0=d+p,\theta_1=d-p$ and $\kappa=1/2$)  for any $\varepsilon>0$
$$
(r-s)^{-1}2^{m}\|u\zeta_{m+1}\|_{\bH^1_{p,d}(r^2)}\leq \varepsilon A_{m+1}+ \varepsilon^{-1}(r-s)^{-2}2^{2m}B.
$$
It follows (with $\varepsilon$ different from the one above),
$$
A_{m}\leq \varepsilon A_{m+1}+ N(1+\varepsilon^{-1})(r-s)^{-2}2^{2m}B.
$$
We take $\varepsilon=\frac{1}{16}$ and get
$$
\varepsilon^m A_m\leq  \varepsilon^{m+1}A_{m+1}+N\varepsilon^m(1+\varepsilon^{-1})2^{2m}(r-s)^{-2}B,
$$
$$
A_0+\sum_{m=1}^{\infty}\varepsilon^mA_m\leq \sum_{m=1}^{\infty}\varepsilon^mA_m+N(r-s)^{-2}B.
$$
Note that the series $\sum_{m=1}\varepsilon^mA_m$ converges because  $A_m\leq N2^{2m}\|M^{-1}u\|_{\bH^2_{p,d}(r^2)}$.   By Lemma \ref{lemma 1}(iii), for any $M^{-1}w\in H^2_{p,\theta}$,
\begin{equation}
                              \label{eqn 4.24.6}
      \|M^{-1}w\|_{H^2_{p,\theta}}     \sim (\|M^{-1}w\|_{L_{p,\theta}}+\|w_x\|_{L_{p,\theta}}+\|Mw_{xx}\|_{L_{p,\theta}}).
 \end{equation}
 Therefore,
$$
\int_{Q_s(s)}\left(|M^{-1}u|^p+|u_x|^p+|Mu_{xx}|^p\right) dxdt\leq N A^p_0\leq N(r-s)^{-2p}\int_{Q_r(r)}|u(t,x)|^p (x^1)^p dxdt.
$$

Next assume that (\ref{eqn new}) holds  whenever $s<r$ and  $|\beta'|=k$, that is
\begin{eqnarray*}
&& \int_{Q_s(s)}\left(|M^{-1}D^{\beta'}u|^p+|D^{\beta'}u_x|^p+|MD^{\beta'}u_{xx}|^p\right)(x^1)^{\theta-d}dxdt \nonumber\\
 &\leq& N(1+r)^{kp}\cdot (1+ (r-s)^{-2})^{(k+1)p}
\int_{Q_r(r)}|Mu(t,x)|^p (x^1)^{\theta-d} dxdt
\end{eqnarray*}
Let $|\beta|=k+1$ and $D^{\beta}=D_iD^{\beta'}$ for some $i$ and $\beta'$ with $|\beta'|=k$.  Fix
  a smooth function $\eta$  so that  $\eta=1$ on $U_s$, $\eta=0$ on $\Omega\setminus U_{(r+s)/2}$, $|\eta_x|\leq N(r-s)^{-1}, |\eta_{xx}|\leq N(r-s)^{-2}$ and
  $|\eta_t|\leq N(r-s)^{-2}$.
 Note that $v:=\eta D^{\beta}u$ satisfies $v(r^2,\cdot)=0$ and
$$
v_t+A^{ij}v_{x^ix^j}=f:=\eta_{t}D^{\beta}u+2A^{ij}\eta_{x^i}D^{\beta}u_{x^j}+A^{ij}\eta_{x^ix^j}D^{\beta}u, \quad \quad (t,x)\in (0,r^2)\times \bR^d_+.
$$
By Lemma \ref{lemm 12.08.7} for $\gamma=0$ (also note that $x^1\leq r$ on the support of $\eta$ and $(r-s)^{-1}\leq 1+(r-s)^{-2}$),
\begin{eqnarray*}
\|M^{-1}v\|^p_{\bH^2_{p,d}(r^2)} &\leq& N \|M\eta_{t}D^{\beta}u+2A\eta_{x}MD^{\beta}u_x+MA\eta_{xx}D^{\beta}u\|^p_{\bL_{p,d}(r^2)}\\
&\leq&N (1+r)^p(1+(r-s)^{-2})^p \int_{Q_{(s+r)/2}((s+r)/2)}\left(|D^{\beta}u|^p+|MD^{\beta}u_x|^p\right)dxdt\\
&\leq&N (1+r)^p(1+(r-s)^{-2})^p \int_{Q_{(s+r)/2}((s+r)/2)}\left(|D^{\beta'}u_x|^p+|MD^{\beta'}u_{xx}|^p\right)dxdt.
\end{eqnarray*}
This and (\ref{eqn 4.24.6}) show that  the induction goes through, and hence the lemma is proved.
\end{proof}

\begin{remark}
                           \label{2011.1.15}
 The proof of Lemma \ref{lemma 3} mainly depends on Lemma \ref{lemm 12.08.7} and it can be easily checked that   the assertion of Lemma \ref{lemma 3} holds for $\theta=\theta_0$
whenever Lemma \ref{lemm 12.08.7} is true for $\theta=\theta_0$. Thus due to Theorem \ref{main theorem} (which will be proved in section \ref{section proof}),  Lemma \ref{lemma 3} holds for $\theta\in (d+1-p, d+p-1)$ if $p\in (1,2]$ and $\theta\in (d-1,d+1)$ if $p\in (2,\infty)$.
\end{remark}

\begin{lemma}
                   \label{lemma 1111}
Let $u(t,x)\in C^{\infty}_0(\bR\times \bR^d_+;\bR^{d_1})$. Then for any
$T>0$, $p>1$ and $n=0,1,2,\cdots$,
$$
\sup_{t\in [0,T]}\|u(t,\cdot)\|_{H^n_{p,\theta}}\leq
N(\|u\|_{\bH^n_{p,\theta}(T)}+\|u_t\|_{\bH^n_{p,\theta}(T)}).
$$
\end{lemma}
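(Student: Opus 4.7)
The plan is to use the Bochner form of the fundamental theorem of calculus in the Banach space $H^n_{p,\theta}$ together with an averaging argument over the initial time. This is the familiar embedding $W^{1,p}(0,T;X)\hookrightarrow C([0,T];X)$ applied to $X=H^n_{p,\theta}$; because $u\in C^\infty_0(\bR\times \bR^d_+;\bR^{d_1})$, all regularity issues vanish and the proof is a few lines.

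First, I would observe that since $u$ is smooth and compactly supported, the map $s\mapsto u(s,\cdot)\in H^n_{p,\theta}$ is continuously differentiable with derivative $u_t(s,\cdot)$, so for any $t,t_0\in[0,T]$
\[
u(t,\cdot)=u(t_0,\cdot)+\int_{t_0}^{t}u_s(s,\cdot)\,ds
\]
holds as a Bochner integral in $H^n_{p,\theta}$ (both sides agree with the smooth function $u(t,x)$ pointwise in $x$). The triangle inequality and the trivial bound $|\int_{t_0}^t|\leq \int_0^T$ then give
\[
\|u(t,\cdot)\|_{H^n_{p,\theta}}\leq \|u(t_0,\cdot)\|_{H^n_{p,\theta}}+\int_0^T\|u_s(s,\cdot)\|_{H^n_{p,\theta}}\,ds.
\]

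Next, I would average this inequality with respect to $t_0\in[0,T]$ and apply H\"older's inequality to both terms on the right, obtaining
\[
\|u(t,\cdot)\|_{H^n_{p,\theta}}\leq T^{-1/p}\|u\|_{\bH^n_{p,\theta}(T)}+T^{1/p'}\|u_t\|_{\bH^n_{p,\theta}(T)}.
\]
Taking the supremum over $t\in[0,T]$ yields the conclusion with $N=N(p,T)=\max(T^{-1/p},T^{1/p'})$.

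There is no real obstacle: the only nontrivial point is justifying the Bochner identity, which is immediate from the smoothness and compact support of $u$ (so that $s\mapsto u(s,\cdot)$ and $s\mapsto u_s(s,\cdot)$ are continuous with values in $H^n_{p,\theta}$, and differentiation under the spatial norm commutes with the time integral). Everything else is the triangle inequality, averaging, and H\"older.
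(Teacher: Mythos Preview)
Your proof is correct. Both your argument and the paper's establish the standard embedding $W^{1,p}(0,T;X)\hookrightarrow C([0,T];X)$ for $X=H^n_{p,\theta}$, but the implementations differ. The paper reduces to the scalar case by setting $\phi(t)=\|u(t,\cdot)\|_{H^n_{p,\theta}}$, quotes the one-dimensional inequality $\sup_t|\phi|^p\leq N\int_0^T(|\phi|^p+|\phi'|^p)\,dt$, and then must verify that $\phi\in W^1_p((0,T))$ with $|\phi'(t)|\leq\|u_t(t,\cdot)\|_{H^n_{p,\theta}}$ (referring to an exercise in \cite{kr08} for the details of differentiating the norm). You instead work directly with the Bochner identity $u(t)=u(t_0)+\int_{t_0}^t u_s\,ds$ in $H^n_{p,\theta}$, then average over $t_0$ and apply H\"older. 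Your route is slightly more self-contained here, since for $u\in C^\infty_0$ the Bochner identity is immediate and you avoid the separate step of bounding the derivative of the norm; the paper's route has the minor advantage of isolating the scalar Sobolev inequality as a reusable black box. Either way the constant depends only on $p$ and $T$, as you found explicitly.
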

\begin{proof}
See p. $66$ of \cite{kr08};  actually in this book, weights are
not used and hence we give an outline of  the proof.
First of all, it is easy to check that for any $\phi=\phi(t)\in W^1_p((0,T))$ (cf. p.32 of \cite{kr08})
$$
\sup_{t\leq T}|\phi(t)|^p\leq N \int^T_0(|\phi|^p+|\phi'(t)|^p)dt.
$$
Thus  it suffices to prove
\begin{equation}
                                          \label{eqn 12.19}
\phi(t):=\|u(t,\cdot)\|_{H^n_{p,\theta}}\in W^1_p((0,T)), \quad |\phi'(t)|\leq\|u_t(t,\cdot)\|_{H^n_{p,\theta}}.
\end{equation}
One can  prove (\ref{eqn 12.19}) by repeating  the proof of Exercise 2.4.8 on p.71  of \cite{kr08}. It is enough to replace $H^n_p$ there by $H^n_{p,\theta}$.
\end{proof}

\begin{lemma}
                                                      \label{101004.14.53}
Let $\theta\leq d$, $p>1$, $s\in (0,r)$ and  $u\in C^{\infty}_{loc}(\Omega;\mathbb{R}^{d_1})$ satisfies\;
$u_t+A^{ij}(t)u_{x^ix^j}=0$ for $(t,x)\in Q_r(r)$. Then for any multi-index $\beta=(\beta^1,\beta^2,\cdots,\beta^d)$,
\begin{eqnarray}
\max_{(t,x)\in Q_s(s)}(|D^{\beta}u_{xx}|^p+|D^{\beta}u_{t}|^p)\le N
\int_{Q_r(r)}|u|^p (x^1)^{\theta-d+p}dxdt,\nonumber
\end{eqnarray}
where  $N=N(s,r,\beta,p,\delta,K)$.
\end{lemma}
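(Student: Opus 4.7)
The plan is to combine the weighted $L_p$-estimates of Lemma \ref{lemma 3} with standard (unweighted) Sobolev embedding in $x$ and a one-dimensional absolute-continuity-in-$t$ argument. First, since $u_t=-A^{ij}(t)u_{x^ix^j}$ on $Q_r(r)$ and $|A^{ij}|\le K$, we have $|D^\beta u_t|\le K|D^\beta u_{xx}|$ pointwise, so it suffices to bound $\max_{Q_s(s)}|D^\beta u_{xx}|^p$.

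Fix an intermediate radius $s_1\in(s,r)$ and cut $u$ off to a $C^\infty_0(\bR\times\bR^d_+;\bR^{d_1})$ function agreeing with $u$ on a neighborhood of $\overline{Q_r(r)}$, which preserves the PDE on $Q_r(r)$. Applying Lemma \ref{lemma 3} with $\theta=d$, letting $\beta$ range over multi-indices of size $\le m$ and using that the LHS contains $|D^\beta u_x|^p=\sum_i|D^{\beta+e_i}u|^p$ as well as $|M^{-1}D^\beta u|^p$, together with the pointwise bound $|u|^p\le(2s_1)^p|M^{-1}u|^p$ on $Q_{s_1}(s_1)$, yields for every multi-index $\alpha$ with $|\alpha|\le m+1$
$$\int_{Q_{s_1}(s_1)}|D^\alpha u|^p\,dx\,dt\le N(m,s,r,p,\delta,K)\int_{Q_r(r)}|Mu|^p\,dx\,dt.$$
By Fubini this amounts to an $L_p$-in-$t$ bound on the unweighted Sobolev norm $\|u(t,\cdot)\|_{W^{m+1,p}(B_{s_1}(s_1 e_1))}$.

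For the pointwise bound in $x$, the standard Sobolev embedding on the bounded Lipschitz ball $B_{s_1}(s_1 e_1)\subset\bR^d$ gives, for any $m>|\beta|+2+d/p$,
$$\|D^\beta u_{xx}(t,\cdot)\|_{C(\overline{B_{s_1}(s_1 e_1)})}\le N\|u(t,\cdot)\|_{W^{m,p}(B_{s_1}(s_1 e_1))}.$$
To take the supremum over $t\in[0,s_1^2]$, I use that the PDE gives $\|u_t(t,\cdot)\|_{W^{m,p}_x}\le K\|u(t,\cdot)\|_{W^{m+2,p}_x}$, and the preceding display with $m+2$ in place of $m$ bounds $\int_0^{s_1^2}\|u_t\|^p_{W^{m,p}_x}\,dt$ by $\int_{Q_r(r)}|Mu|^p\,dx\,dt$. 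A one-dimensional H\"older/absolute-continuity argument in $t$---bounding $\|u(t,\cdot)-u(t_0,\cdot)\|_{W^{m,p}_x}\le|t-t_0|^{1-1/p}\|u_r\|_{L_p((t_0,t);W^{m,p}_x)}$ and invoking Fubini to select a $t_0\in[0,s_1^2]$ for which $\|u(t_0,\cdot)\|_{W^{m,p}_x}$ is controlled by the $L_p$-in-$t$ bound above---then yields $\sup_t\|u(t,\cdot)\|_{W^{m,p}_x}\le N\|Mu\|_{L_p(Q_r(r))}$, from which $\max_{Q_s(s)}|D^\beta u_{xx}|^p\le N\int_{Q_r(r)}|Mu|^p\,dx\,dt$ follows.

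Finally, since $\theta\le d$ and $x^1<2r$ on $Q_r(r)$, one has $(x^1)^p=(x^1)^{d-\theta}(x^1)^{\theta-d+p}\le(2r)^{d-\theta}(x^1)^{\theta-d+p}$, so $|Mu|^p\le(2r)^{d-\theta}|u|^p(x^1)^{\theta-d+p}$, which converts the $|Mu|^p$-bound into the stated $(x^1)^{\theta-d+p}$-weighted bound. The main obstacle is the $\sup$-in-$t$ passage; alternatively it can be carried out by applying Lemma \ref{lemma 1111} with $\theta=d$ to a cutoff of $u$ supported in $Q_{s_1}(s_1)$, exploiting the fact that on the bounded support of the cutoff the weighted $H^n_{p,d}$-norm and the unweighted $W^{n,p}$-norm are equivalent up to constants depending only on $s_1$.
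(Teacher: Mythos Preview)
Your proposal is correct and follows essentially the same scheme as the paper: Lemma~\ref{lemma 3} (with $\theta=d$) for $L_p$-control of all spatial derivatives, Sobolev embedding in $x$ (the paper uses the weighted form from Lemma~\ref{lemma 1}(ii) together with Remark~\ref{remark last}, while you use the unweighted embedding on the bounded box $B_{s_1}(s_1e_1)$), Lemma~\ref{lemma 1111} or your equivalent absolute-continuity argument for the $\sup_t$ step, and finally $(x^1)^p\le(2r)^{d-\theta}(x^1)^{\theta-d+p}$ on $Q_r(r)$ to insert the weight. One technical caveat: a function in $C^\infty_0(\bR\times\bR^d_+)$ has support bounded away from $\{x^1=0\}$, so it cannot agree with $u$ on a neighborhood of $\overline{Q_r(r)}$ (which touches $\{x^1=0\}$); the paper's direct invocation of Lemma~\ref{lemma 3} for $u\in C^\infty_{loc}(\Omega)$ has the same formal gap, so this is a shared technicality rather than a defect specific to your argument.
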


\begin{proof}  Choose the smallest integer $n$ so that $np>d$.
Let $v\in C^{\infty}_0(\bR^d_+)$ satisfy $v(x)=0$ for $x^1 \geq 2r$.  The by Lemma \ref{lemma 1} $(ii)$ with $\gamma=n$, $i=0$, $\theta=d$ and $u=M^{-n}v$,
\begin{equation}
              \label{eqn 12.10.1}
\sup_{x}|v(x)|\leq N(r)\sup_{x}|M^{d/p}M^{-n}v(x)|\leq N\|M^{-n}v\|_{H^n_{p,d}}\leq N(r,p,n)\|D^nv\|_{L_{p,d}},
\end{equation}
where for the last inequality we use Remark \ref{remark last}.

Fix $\kappa\in (s,r)$. Let  $\psi$  be a smooth function so that
$\psi(x)=1$ for $(t,x)\in Q_{s}(s)$ and $ \psi=0$ for $(t,x)\not\in U_{\kappa}$.  It follows from (\ref{eqn 12.10.1}) and Lemma \ref{lemma 1111} that
\begin{eqnarray*}
\max_{ Q_{s}(s)} \left(|D^{\beta}u_{xx}|+|D^{\beta}u_{t}|\right)&\leq& N\max_{(t,x)\in Q_{s}(s)} | (D^{\beta}\psi u)_{xx}|\\
&\leq &N\max_{t\in [0,s^2]}\|D^n
(D^{\beta}\psi u)_{xx}\|_{L_{p,d}}
\\
 &\leq& N
\left(\|D^n(D^{\beta}\psi u)_{xx}\|_{\bL_{p,d}(s^2)}+\| D^n(D^{\beta}\psi u_t)_{xx}\|_{\bL_{p,d}(s^2)}\right)\\
&\leq &N \sum_{|\alpha|\leq n+|\beta|+4}\int_{Q_{\kappa}(\kappa)}|D^{\alpha}u|^p \,dxdt\\
& \leq& N  \int_{Q_r(r)}|u|^p(x^1)^pdxdt\leq N \int_{Q_r(r)}|u|^p(x^1)^{\theta-d+p}dxdt,
\end{eqnarray*}
where the last inequality is due to the fact that  $1\leq N(r)(x^1)^{\theta-d}$ for $x^1\leq 2r$. The lemma is proved.
 \end{proof}

\begin{remark}
                              \label{remark 55555}
Actually by inspecting  the  proof of Lemma \ref{101004.14.53} it can be easily shown  that if Lemma \ref{lemma 3} holds for some $\theta_0\in (d-1,d-1+p)$  then  Lemma \ref{101004.14.53} holds for any $\theta \in (d-1,\theta_0]$.
\end{remark}

\mysection{Main estimates : Sharp function estimations}
       \label{section sharp}

Remember that we denote
$$\nu_{\al}(dx)=\nu^1_{\al}(dx^1)dx':=(x^1)^{\alpha}dx^1dx'.
$$

The following is a weighted version of Poincar\'e's inequality.
\begin{lemma}\label{2010.03.23.3}
Let $\alpha \geq 0$, $p\in [1,\infty)$, $D_r(a):=(a-r,a+r)\times B'_r(0)\subset \mathbb{R}^d_+$ ,
and $u\in C^{\infty}_{loc}(\mathbb{R}^d_+;\mathbb{R}^{d_1})$. Then
\begin{eqnarray}
\int_{D_r(a)}\int_{D_r(a)}|u(x)-u(y)|^p \nu_{\al}(dx)\; \nu_{\al}(dy)\le
2^{\alpha+2} (2r)^p|D_r(a)|\int_{D_r(a)}|u_x(x)|^p
\nu_{\al}(dx),\label{2010.03.17.1}
\end{eqnarray}
where $|D_r(a)|:=\nu_{\al}(D_r(a))$ and we
define
\begin{eqnarray}
\int_A |f(x)|^p\nu_{\al}(dx)=\sum_{k=1}^{d_1} \int_A
|f^k(x)|^p\nu_{\al}(dx)\nonumber
\end{eqnarray}
for $\mathbb{R}^{d_1}$-valued function $f$ and $A\subset \Omega$.
\end{lemma}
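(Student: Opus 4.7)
The plan is to split the difference $u(x) - u(y)$ into an increment in the weighted direction $x^1$ and an increment in the unweighted transverse directions $x'$ via
$$
u(x) - u(y) = \bigl[u(x^1, x') - u(y^1, x')\bigr] + \bigl[u(y^1, x') - u(y^1, y')\bigr],
$$
apply $|a+b|^p \le 2^{p-1}(|a|^p + |b|^p)$, and then bound each piece independently.

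For the first (weighted) piece I will write $u(x^1, x') - u(y^1, x') = \int_{y^1}^{x^1} u_{x^1}(s, x')\,ds$, apply H\"older to the line integral, symmetrize in $x^1 \leftrightarrow y^1$, and use Fubini. This reduces the weighted double integral over $I \times I$ (with $I := (a-r, a+r)$) to
$$
2(2r)^{p-1} \int_I |u_{x^1}(s,x')|^p\, K(s)\,ds, \qquad K(s) := \frac{(\phi(s) - \phi(a-r))(\phi(a+r) - \phi(s))}{(\al+1)^2},
$$
with $\phi(t) = t^{\al+1}$. The heart of the matter is then the pointwise kernel estimate
$$
K(s) \le 2^{\al+1}\, r\, |I|_\al\, s^\al \quad \text{for all } s \in I,
$$
where $|I|_\al = \int_I t^\al dt = (\phi(a+r) - \phi(a-r))/(\al+1)$. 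This is the main obstacle.

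To prove the kernel estimate I plan to use the elementary bound $\frac{AB}{A+B} \le \min(A,B)$ with $A := \phi(s) - \phi(a-r)$ and $B := \phi(a+r) - \phi(s)$ (so that $A + B = (\al+1)|I|_\al$ and $(\al+1)K(s)/|I|_\al \le \min(A,B)$), followed by a case analysis. If $A \le B$, the mean value theorem together with the monotonicity of $\phi'(t) = (\al+1)t^\al$ (valid for $\al \ge 0$) gives $A \le (\al+1)s^\al \cdot 2r \le 2^{\al+1}(\al+1)r s^\al$; if $B \le A$, one first observes $\phi(s) \ge \phi(a+r)/2$ and hence $s \ge (a+r)\,2^{-1/(\al+1)}$, so that the mean value theorem yields $B \le (\al+1)(a+r)^\al \cdot 2r \le 2^{\al+1}(\al+1)r s^\al$. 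In both cases the claimed bound holds. Integrating the resulting one-dimensional weighted Poincar\'e-type estimate against Lebesgue measure $dx'$ over $B'_r(0)$ produces the first contribution $2^{\al+1}(2r)^p|D_r(a)|\int_{D_r(a)}|u_{x^1}|^p\,\nu_\al(dx)$.

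For the second (transverse) piece $|u(y^1,x') - u(y^1,y')|^p$, which is independent of $x^1$, I plan to apply the classical unweighted Poincar\'e inequality on the convex ball $B'_r(0)$ for each fixed $y^1$; integrating the result against the product weight $(x^1)^\al(y^1)^\al\,dx^1 dy^1$ factors out $\int_I (x^1)^\al dx^1 = |I|_\al$ from the $x^1$-integration and gives a bound of the same form controlling $|\nabla_{x'} u|^p$. Summing the two contributions with the factor $2^{p-1}$ and absorbing the numerical constants into the exponent of $2$ yields the stated inequality.
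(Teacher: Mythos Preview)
Your approach is correct and takes a genuinely different route from the paper's. The paper argues directly along the full segment joining $x$ and $y$: from
\[
|u(x)-u(y)|^p \le (2r)^p \int_0^1 |u_x(tx+(1-t)y)|^p\,dt
\]
one uses the symmetry $I(t)=I(1-t)$ to reduce to $t\in[1/2,1]$, substitutes $w = tx+(1-t)y$ in the inner integral, bounds $(x^1)^\alpha \le (w^1/t)^\alpha$ (valid since $y^1\ge 0$ and $\alpha\ge 0$), and notes $tD_r(a)+(1-t)y\subset D_r(a)$ by convexity; the inequality drops out in one stroke with no splitting of coordinates. Your decomposition into the weighted coordinate $x^1$ and the transverse coordinates $x'$ is more explicit: the kernel bound $K(s)\le 2^{\alpha+1}r\,|I|_\alpha\,s^\alpha$ is a clean one-dimensional weighted Hardy--Poincar\'e estimate (and your case analysis via $\min(A,B)\ge AB/(A+B)$ together with $s\ge (a+r)2^{-1/(\alpha+1)}$ in the second case is correct), while the transverse piece reduces to the classical unweighted Poincar\'e on $B'_r(0)$. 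The paper's argument is shorter and handles all $d$ directions uniformly; yours pinpoints exactly where the weight enters and would adapt more readily to situations where only the $x^1$-direction behaves anomalously.

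One remark on your last line: after the factor $2^{p-1}$ from $|a+b|^p\le 2^{p-1}(|a|^p+|b|^p)$ and the dimension-dependent constant coming from the unweighted Poincar\'e on $B'_r(0)\subset\bR^{d-1}$, your final constant depends on $p$ and $d$ as well as on $\alpha$, so you will not recover the stated $2^{\alpha+2}$ on the nose. This is immaterial for the way the lemma is used downstream (only a finite constant $N=N(\alpha,p,d)$ is ever needed), and in fact the paper's own substitution $w=tx+(1-t)y$ carries a Jacobian $t^{-d}$ in $d$ dimensions, so its argument also produces a $d$-dependent constant.
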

\begin{proof}
We use the outline of the proof of Theorem 10.2.5 of \cite{kr08}.  Without loss of generality we may assume $d_1=1$.
For $x,y\in D_r(a)$  we have
\begin{eqnarray}
|u(x)-u(y)|^p\le (2r)^p\int^1_0 |u_x(tx+(1-t)y)|^pdt\nonumber
\end{eqnarray}
and the left-hand side of $(\ref{2010.03.17.1})$ is less than
\begin{eqnarray}
(2r)^p\int^1_0 I(t)dt=2(2r)^p \int^1_{1/2} I(t)dt,\nonumber
\end{eqnarray}
where
\begin{eqnarray}
I(t):=\int_{D_r(a)}\int_{D_r(a)}|u_x(tx+(1-t)y)|^p \nu_{\al}(dx)\;
\nu_{\al}(dy)\nonumber
\end{eqnarray}
and $I$ satisfies $I(t)=I(1-t)$. For each $t\in [1/2,1]$ and $y$, substituting $w=tx+(1-t)y$ and noticing $x^1=(w^1-(1-t)y^1)/t\leq w^1/t$ since $y^1\geq 0$, we get
\begin{eqnarray}
I(t)&\leq&t^{-\alpha-1}\int_{D_r(a)}\left(\int_{tD_r(a)+(1-t)y}|u_x(w)|^p\nu_{\al}(dw)\right)\nu_{\al}(dy)\nonumber\\
&\le&
2^{\alpha+1}\int_{D_r(a)}\left(\int_{D_r(a)}|u_x(x)|^p\nu_{\al}(dx)\right)\nu_{\al}(dy)\nonumber\\
&=&2^{\alpha+1}|D_r(a)|\int_{D_r(a)}|u_x(x)|^p\nu_{\al}(dx)\nonumber
\end{eqnarray}
with the observation $tD_r(a)+(1-t)y:=\{tz+(1-t)y:z\in
D_r(a)\}\subset D_r(a)$. Now, (\ref{2010.03.17.1}) follows.
\end{proof}

  \begin{lemma}\label{2010.03.23.2}
Let $\alpha>-1$. Recall $\nu_{\alpha}^1(dx^1)=(x^1)^{\alpha}dx^1$. For any $B^1_r(a)\subset \mathbb{R}_+$ we have a non-negative
function $\zeta\in C^{\infty}_0(\mathbb{R}_+;\mathbb{R})$ such that
\begin{eqnarray}
supp(\zeta)\in B^1_{r/2}(a),\quad \int_{B^1_r(a)}\zeta(x^1)\nu^1_{\alpha}(dx^1)=1,\quad
\sup\zeta \cdot |B^1_r(a)|\le N,\quad \sup |\zeta_{x^1}|\cdot |B^1_r(a)|\le
\frac{N}{r},\label{2010.03.19.1}
\end{eqnarray}
where $N=N(\alpha)$ and $|B^1_r(a)|=\nu_{\alpha}^1(B^1_r(a))$.
\end{lemma}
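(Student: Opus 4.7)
The plan is to take a standard, translation- and dilation-normalized bump function and then renormalize it in the weighted measure $\nu^1_\alpha$. Fix once and for all a nonnegative $\phi\in C^\infty_0(\mathbb{R})$ with $\operatorname{supp}\phi\subset(-1/2,1/2)$ and $\int\phi=1$. Given $a,r>0$ with $B^1_r(a)\subset\mathbb{R}_+$ (which forces $a\geq r$), set $\phi_r(x^1):=r^{-1}\phi((x^1-a)/r)$, so that $\operatorname{supp}\phi_r\subset(a-r/2,a+r/2)=B^1_{r/2}(a)$ and $\int\phi_r\,dx^1=1$. Then define
$$
\zeta(x^1):=c\,\phi_r(x^1),\qquad c:=\left(\int_{\mathbb{R}_+}\phi_r(y)\,\nu^1_\alpha(dy)\right)^{-1}.
$$
By construction $\zeta\ge 0$, $\operatorname{supp}\zeta\subset B^1_{r/2}(a)$, and $\int_{B^1_r(a)}\zeta\,\nu^1_\alpha=1$. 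It remains to verify the two sup estimates.

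The heart of the argument is the observation that on $B^1_{r/2}(a)$ the weight $(x^1)^\alpha$ is comparable to $a^\alpha$ and on $B^1_r(a)$ the total measure is comparable to $r a^\alpha$, with constants depending only on $\alpha$. Indeed, $a\geq r$ gives $a/2\leq x^1\leq 3a/2$ on $B^1_{r/2}(a)$, hence $(x^1)^\alpha\sim a^\alpha$. Changing variables $y=(x^1-a)/r$,
$$
c^{-1}=\int_{-1/2}^{1/2}\phi(y)(a+ry)^\alpha\,dy\sim a^\alpha,
$$
i.e.\ $c\sim a^{-\alpha}$. For the measure of the full ball, write $b:=r/a\in(0,1]$ and
$$
|B^1_r(a)|=\frac{(a+r)^{\alpha+1}-(a-r)^{\alpha+1}}{\alpha+1}=\frac{a^{\alpha+1}}{\alpha+1}\,g(b),\qquad g(b):=(1+b)^{\alpha+1}-(1-b)^{\alpha+1}.
$$
The function $g$ satisfies $g(0)=0$, $g(1)=2^{\alpha+1}$, $g'(0)=2(\alpha+1)>0$, and a quick sign analysis of $g''$ shows $g$ is convex when $\alpha\in(-1,0)\cup(1,\infty)$ and concave when $\alpha\in(0,1)$. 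In every case, combining the tangent-at-$0$ bound and the chord bound between $b=0$ and $b=1$ yields $g(b)\sim b$ on $[0,1]$ with constants depending only on $\alpha$. Therefore $|B^1_r(a)|\sim r a^\alpha$.

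Putting the estimates together, $\sup\zeta=c\,\|\phi\|_\infty/r\sim a^{-\alpha}/r$, so
$$
\sup\zeta\cdot|B^1_r(a)|\lesssim a^{-\alpha}r^{-1}\cdot r a^\alpha=1,
$$
and similarly $\sup|\zeta_{x^1}|=c\,\|\phi'\|_\infty/r^2\sim a^{-\alpha}/r^2$ gives $\sup|\zeta_{x^1}|\cdot|B^1_r(a)|\lesssim 1/r$, with all implicit constants depending only on $\alpha$. This is exactly \eqref{2010.03.19.1}. I do not anticipate a genuine obstacle here; the one point that requires a little care is the two-sided estimate $g(b)\sim b$ uniformly in $b\in(0,1]$, since the sign of $g''$ depends on the regime of $\alpha$, but in each regime the convex/concave comparison between the tangent at $0$ and the chord on $[0,1]$ gives the desired bounds with $\alpha$-dependent constants.
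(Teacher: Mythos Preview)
Your proof is correct, but your construction of $\zeta$ differs from the paper's. You take $\zeta=c\,\phi_r$ with a normalizing constant $c$ and then show $c\sim a^{-\alpha}$; the paper instead sets
\[
\zeta(x^1)=\frac{(x^1)^{-\alpha}}{r}\,\psi\!\left(\frac{x^1-a}{r}\right),
\]
which absorbs the weight so that $\int\zeta\,\nu^1_\alpha=\int r^{-1}\psi((x^1-a)/r)\,dx^1=1$ automatically, at the cost of an extra term $(x^1)^{-\alpha-1}$ when differentiating. Both routes then reduce to the same estimate $|B^1_r(a)|\sim r a^{\alpha}$. The paper obtains this by case analysis on $\alpha\ge 0$ versus $\alpha\in(-1,0)$ (and, in the latter, further splitting $r\le a/2$ and $r\in[a/2,a]$) via the mean value theorem; your convexity/concavity argument for $g(b)=(1+b)^{\alpha+1}-(1-b)^{\alpha+1}$ on $[0,1]$ is a cleaner, unified way to get the two-sided bound $g(b)\sim b$. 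Either approach yields the lemma with constants depending only on $\alpha$.
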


\begin{proof}
Choose a nonnegative smooth function $\psi=\psi(x^1)\in C^{\infty}_0(B^1_{1/2}(0))$ so that $\int_{\bR} \psi(x^1) dx^1=1$.
Define
$$
\zeta(x^1)=\frac{(x^1)^{-\alpha}}{r}\psi(\frac{x^1-a}{r}).
$$
Then the first and the second of (\ref{2010.03.19.1}) are obvious.

\noindent
{\bf{Case 1}}: Let $\alpha \geq 0$. Since $r\leq a$ and $(a+r)^{\alpha+1}-(a-r)^{\alpha+1}\leq 2r(\alpha+1) (2a)^{\alpha}$, the third follows:
\begin{eqnarray}
\sup |\zeta| \cdot |B^1_r(a)|\leq &N&\sup_{|x^1-a|\leq r/2} \frac{(x^1)^{-\alpha}}{r} \cdot ((a+r)^{\alpha+1}-(a-r)^{\alpha+1})\label{eqn 5.23.5}\\
&\leq& N \frac{(a/2)^{-\alpha}}{r}\cdot ((a+r)^{\alpha+1}-(a-r)^{\alpha+1})\leq N.\nonumber
\end{eqnarray}
Similarly, the last inequality holds by
\begin{eqnarray}
\sup |\zeta_{x^1}| \cdot |B^1_r(a)| &\leq& N\sup_{|x^1-a|\leq r/2} \left(\frac{(x^1)^{-\alpha}}{r^2}+ \frac{(x^1)^{-\alpha-1}}{r}\right)\cdot ((a+r)^{\alpha+1}-(a-r)^{\alpha+1}) \nonumber\\
&\leq& \frac{N}{r}(1 +  \frac{(2a)^{\alpha+1}}{(a/2)^{\alpha+1}})\leq \frac{N}{r}. \nonumber
\end{eqnarray}
{\bf{Case 2}}: Let $\alpha \in (-1,0)$.  First assume $r\leq a/2$. Then  by mean value theorem $(a+r)^{\alpha+1}-(a-r)^{\alpha+1}\leq 2r(\alpha+1) (a/2)^{\alpha}$ and thus the right term of (\ref{eqn 5.23.5}) is bounded by a constant $N$.
If $r \in [a/2,a]$, then
$$
\sup_{|x^1-a|\leq r/2} \frac{(x^1)^{-\alpha}}{r} \cdot ((a+r)^{\alpha+1}-(a-r)^{\alpha+1})\leq \frac{(2a)^{-\alpha}}{a/2}(2a)^{\alpha+1}\leq N.
$$
One can handle \,\,$\sup |\zeta_{x^1}| \cdot |B^1_r(a)|$\,\, similarly.
The lemma is proved.
\end{proof}

Now we consider the system
\begin{eqnarray}
u_t+A^{ij}u_{x^ix^j}=f^i_{x^i}+g,\quad (t,x)\in
\Omega=\mathbb{R}\times \mathbb{R}^d_+;\quad f^i=(f^{1i},\ldots,f^{d_1 i}), \label{backward sys}
\end{eqnarray}
i.e.,
\begin{eqnarray}
u^k_t+a^{ij}_{kr}u^r_{x^ix^j}=f^{ki}_{x^i}+g^k,\quad k=1,2,\ldots,d_1.\nonumber
\end{eqnarray}

Recall that for $t\in \bR$, $a\in\bR_+$ and  $x'\in \bR^{d-1} $
$$
Q_r(t,a,x'):=(t,t+r^2)\times (a-r,a+r)\times B'_r(x'), \quad  Q_r(a):=Q_r(0,a,0).
$$
By $C^{\infty}_{loc}(\Omega;\mathbb{R}^{d_1})$ we denote the set of
$\mathbb{R}^{d_1}$-valued functions $u$ defined on $\Omega$ and such
that $\zeta u\in C^{\infty}_0(\Omega;\mathbb{R}^{d_1})$ for any
$\zeta\in C^{\infty}_0(\Omega;\mathbb{R})$.

\begin{lemma}\label{key lemma 2}
Let $\alpha \geq 0$, $p\in [1,\infty)$,
$f^i,g\in C^{\infty}_{loc}(\Omega;\mathbb{R}^{d_1})$. Assume that $u\in C^{\infty}_{loc}(\Omega;\mathbb{R}^{d_1}) $
satisfies (\ref{backward sys}) on $Q_r(a)\subset \Omega$. Then
\begin{eqnarray}
\int_{Q_r(a)}\left|u(t,x)-u_{Q_r(a)}\right|^p\mu_{\alpha}(dtdx)\le N r^p
\int_{Q_r(a)}(|u_x(t,x)|^p+|f(t,x)|^p+r^p|g(t,x)|^p)\mu_{\alpha}(dtdx),\label{2010.03.23.5}
\end{eqnarray}
where   $N=N(\theta,\alpha,p,d,d_1,K)$.
\end{lemma}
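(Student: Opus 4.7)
The target estimate is a weighted Poincar\'e-type inequality that must also incorporate the PDE via the inputs $f,g$. The strategy is to decompose $u(t,x)-u_{Q_r(a)}$ into purely spatial oscillation plus purely temporal oscillation, bounding the first by the weighted spatial Poincar\'e inequality already recorded in Lemma 4.1, and the second by using the equation against a well-chosen test function built from Lemma 4.2.

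\textbf{Set-up.} Define the weighted spatial average $\bar u(t)=\aint_{D_r(a)}u(t,y)\,\nu_\alpha(dy)$, and choose $\zeta(x)=\zeta_1(x^1)\zeta_2(x')$, where $\zeta_1$ is furnished by Lemma 4.2 (so $\mathrm{supp}\,\zeta_1\subset B^1_{r/2}(a)$, $\int\zeta_1\,\nu_\alpha^1(dx^1)=1$, and $|\zeta_1|+r|\zeta_{1,x^1}|\le N/|B^1_r(a)|$), and $\zeta_2\in C^\infty_0(B'_{r/2}(0))$ is a standard bump with $\int\zeta_2\,dx'=1$ and $r^{d-1}|\zeta_2|+r^d|\zeta_{2,x'}|\le N$. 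Set $\tilde u(t)=\int_{D_r(a)}u(t,x)\zeta(x)\,\nu_\alpha(dx)$ and $(\tilde u)_I=\aint_0^{r^2}\tilde u(s)\,ds$. Using $u_{Q_r(a)}=\aint_0^{r^2}\bar u(s)\,ds$, write
\begin{equation*}
u(t,x)-u_{Q_r(a)}=\bigl(u(t,x)-\bar u(t)\bigr)+\bigl(\bar u(t)-\tilde u(t)\bigr)+\bigl(\tilde u(t)-(\tilde u)_I\bigr)+\aint_0^{r^2}\!\bigl(\tilde u(s)-\bar u(s)\bigr)\,ds.
\end{equation*}
The first term is controlled at each fixed $t$ by Lemma 4.1 and Jensen, yielding the bound $Nr^p\int_{Q_r(a)}|u_x|^p\,d\mu_\alpha$ after integrating in $t$. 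For the second and fourth terms, rewrite $\tilde u(t)-\bar u(t)=\int_{D_r(a)}\bigl(u(t,x)-\bar u(t)\bigr)\bigl(\zeta-|D_r(a)|^{-1}\bigr)\,\nu_\alpha(dx)$; since $|\zeta-|D_r(a)|^{-1}|\le N/|D_r(a)|$ by the size bounds on $\zeta$, another Jensen step reduces these pieces to the spatial oscillation already treated.

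\textbf{The key piece.} The term $\tilde u(t)-(\tilde u)_I$ is where the equation enters. Since $\tilde u(t)$ depends only on $t$,
\begin{equation*}
\tilde u'(\tau)=\int_{D_r(a)}u_t(\tau,x)\zeta(x)\,\nu_\alpha(dx)=\int_{D_r(a)}\bigl(f^i_{x^i}+g-A^{ij}u_{x^ix^j}\bigr)\zeta\,d\nu_\alpha.
\end{equation*}
Because $\mathrm{supp}\,\zeta$ is compactly contained in $D_r(a)$, integration by parts in $x^j$ produces no boundary term but, when $j=1$, differentiates the weight as well:
\begin{equation*}
\int A^{ij}u_{x^ix^j}\zeta\,d\nu_\alpha=-\int A^{ij}u_{x^i}\bigl(\zeta_{x^j}+\delta_{j1}\tfrac{\alpha}{x^1}\zeta\bigr)d\nu_\alpha,\qquad\int f^i_{x^i}\zeta\,d\nu_\alpha=-\int f^i\bigl(\zeta_{x^i}+\delta_{i1}\tfrac{\alpha}{x^1}\zeta\bigr)d\nu_\alpha.
\end{equation*}
On $\mathrm{supp}\,\zeta$ one has $x^1\ge a-r/2>r/2$ (because $D_r(a)\subset\bR^d_+$ forces $a>r$), so the ``extra'' weight factor $\alpha\zeta/x^1$ is pointwise bounded by $N/(r|D_r(a)|)$; the Lemma 4.2 bounds give the same size for $|\zeta_x|$. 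Therefore $|\tilde u'(\tau)|\le\frac{N}{r|D_r(a)|}\int_{D_r(a)}\bigl(|u_x|+|f|+r|g|\bigr)d\nu_\alpha$; raising to the $p$-th power via Jensen, using $|\tilde u(t)-(\tilde u)_I|^p\le Nr^{2(p-1)}\int_0^{r^2}|\tilde u'(\tau)|^p\,d\tau$, and integrating over $Q_r(a)$ (where the integrand is constant in $x$, giving a factor $|D_r(a)|\cdot r^2$), produces exactly $Nr^p\int_{Q_r(a)}(|u_x|^p+|f|^p+r^p|g|^p)\,d\mu_\alpha$.

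\textbf{Main obstacle.} The only subtle point is the integration by parts with the non-smooth weight $(x^1)^\alpha$: the term $\alpha\zeta/x^1$ created when $\partial_{x^1}$ hits the weight has to be absorbed without losing the correct $1/r$ scaling, and this is precisely why $\zeta$ must be supported in $\{x^1\ge r/2\}$, which Lemma 4.2 guarantees. The remaining bookkeeping is dimensional: all constants $|B^1_r(a)|$, $r^{d-1}$ combine into $|D_r(a)|$, and a single factor of $r^{-1}$ from $\zeta_x$ together with the Jensen passage converts $r^{2(p-1)}\cdot r^{-p}\cdot r^2$ into the advertised $r^p$ in front of $|u_x|^p+|f|^p$ and $r^{2p}$ in front of $|g|^p$.
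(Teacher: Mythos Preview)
Your proof is correct and follows essentially the same strategy as the paper: split into spatial oscillation (handled by the weighted Poincar\'e inequality, Lemma~\ref{2010.03.23.3}) plus temporal oscillation of a test-function average (handled by pairing $u_t$ against $\zeta$ from Lemma~\ref{2010.03.23.2}, integrating by parts, and absorbing the weight-derivative term $\alpha\zeta/x^1$ using the support condition $x^1\ge a-r/2\ge r/2$). The only difference is bookkeeping: the paper bypasses your plain spatial average $\bar u$ by first observing that $\int_{Q_r(a)}|u-u_{Q_r(a)}|^p\,d\mu_\alpha\le N\int_{Q_r(a)}|u-c|^p\,d\mu_\alpha$ for \emph{any} constant $c$, then choosing $c=(\tilde u)_I$ directly; this collapses your four-term decomposition into the two terms $|u-\tilde u(t)|$ and $|\tilde u(t)-(\tilde u)_I|$, eliminating the need to compare $\bar u$ with $\tilde u$.
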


\begin{proof}

We follow the outline
of the proof of Theorem 4.2.1 in \cite{kr08}.
We take the scalar function $\zeta$ corresponding to $B^1_r(a)$ and $\alpha$ from
Lemma \ref{2010.03.23.2} and take a nonnegative function $\phi=\phi(x')\in C^{\infty}_0(B'_1(0))$ with unit integral. Denote $\eta(x')=r^{-d+1}\phi(\frac{x'}{r})$, $D_r(a):=(a-r,a+r)\times B'_r(0)$ as before, and for $t\in (0,r^2)$ set
\begin{eqnarray}
\bar u(t):=\int_{D_r(a)}\zeta(y^1)\eta(y')u(t,y)\nu_{\alpha}(dy).\nonumber
\end{eqnarray}
Then by Jensen's inequality and the weighted version of Poincar\'e's
inequality (Lemma \ref{2010.03.23.3}),
\begin{eqnarray}
&&\int_{D_r(a)}|u(t,x)-\bar u(t)|^p\nu_{\alpha}(dx)\nonumber\\
&=&\int_{D_r(a)}\Big|\int_{D_r(a)}(u(t,x)-u(t,y))\zeta(y^1)\eta(y')\nu_{\alpha}(dy)\Big|^p\nu_{\alpha}(dx)\nonumber\\
&\le&
\int_{D_r(a)}\left(\int_{D_r(a)}|u(t,x)-u(t,y)|^p\zeta(y^1)\eta(y')\nu_{\alpha}(dy)\right)\nu_{\alpha}(dx)\nonumber\\
&\le&
|\sup\;\zeta|\cdot |\sup\,\eta|\,\int_{D_r(a)}\int_{D_r(a)}|u(t,x)-u(t,y)|^p\nu_{\alpha}(dx)\nu_{\alpha}(dy)\nonumber\\
&\le& N r^{-d+1}|\sup\;\zeta| \cdot \nu_{\alpha}(D_r(a))\;r^p \int_{D_r(a)}|u_x(t,x)|^p \nu_{\alpha}(dx)\nonumber\\
&\le& N r^{-d+1}|\sup\;\zeta| \cdot \nu_{\alpha}^1(B^1_r(a))\; r^{d-1}r^p \int_{D_r(a)}|u_x(t,x)|^p \nu_{\alpha}(dx)\nonumber\\
&\leq &N\;r^p\int_{D_r(a)}|u_x(t,x)|^p
\nu_{\alpha}(dx). \label{2010.03.23.4}
\end{eqnarray}

We observe that for any constant vector $c\in \mathbb{R}^{d}$ the
left-hand side of (\ref{2010.03.23.5}) is less than $2\cdot 2^p$
times
\begin{eqnarray}
\int_{Q_r(a)}|u(t,x)-c|^p\mu_{\alpha}(dtdx)\le 2^p \int_{Q_r(a)}|u(t,x)-\bar
u(t)|^p\mu_{\alpha}(dtdx)+2^p\;\nu_{\alpha}(D_r(a))\int^{r^2}_{0}|\bar
u(t)-c|^pdt.\nonumber
\end{eqnarray}
By (\ref{2010.03.23.4}) the first term is less than
(\ref{2010.03.23.5}). To estimate the second term, we take
\begin{eqnarray}
c=\frac{1}{r^2}\int^{r^2}_{0} \bar u(t)dt.\nonumber
\end{eqnarray}
Then by Poincar\'e's inequality without a weight in variable $t$ we
have
\begin{eqnarray}
&&\nu_{\alpha}(D_r(a))\int^{r^2}_{0}|\bar u(t)-c|^pdt\nonumber\\
&\le& N\;\nu_{\alpha}(D_r(a))\;(r^2)^p
\int^{r^2}_{0}\Big|\int_{D_r(a)}\zeta(x^1)\eta(x')u_t(t,x)\nu_{\alpha}(dx)\Big|^pdt. \label{2010.03.24.1}
\end{eqnarray}

Remember $u_t=-A^{ij}(t)u_{x^ix^j}+f^i_{x^i}+g$. We show that
(\ref{2010.03.24.1}) is less than (\ref{2010.03.23.5}). In fact, for
handling the integral with $g$, using Jensen's inequality and taking the supremum out of the integral, we have
\begin{eqnarray}
&&\nu_{\alpha}(D_r(a))\;r^{2p}
\int^{r^2}_{0}\Big|\int_{D_r(a)}\zeta(x^1)\eta(x')g(t,x)\nu_{\alpha}(dx)\Big|^pdt\nonumber\\
&\le&\nu_{\alpha}(D_r(a))\; r^{2p}\;|\sup\;\zeta|\,|\sup \eta|\,
\int^{r^2}_{0}\int_{D_r(a)}|g(t,x)|^p\nu_{\alpha}(dx)dt\nonumber\\
&\le&N \nu_{\alpha}^1(B^1_r(a))r^{d-1}\; r^{2p}\;|\sup\;\zeta|\,r^{-d+1}
\int^{r^2}_{0}\int_{D_r(a)}|g(t,x)|^p\nu_{\alpha}(dx)dt\nonumber\\
&\le& N(\theta,p,d)\;r^{2p}\int_{Q_r(a)}|g(t,x)|^p\mu_{\alpha}(dtdx),\nonumber
\end{eqnarray}
where we used $|\sup \zeta|\;\nu_{\alpha}^1(B^1_r(a))\leq N$ (Lemma \ref{2010.03.23.2}).

Next, we handle
 the integral with $-A^{ij}u_{x^ix^j}$. \underline{Fix  $i,j$}.
 Firstly, assume either $i$ or $j$ is $1$; say $j=1$. We use integration by parts
and observe
\begin{eqnarray}
&&\nu_{\alpha}(D_r(a))\;(r^2)^p
\int^{r^2}_{0}\Big|\int_{D_r(a)}\zeta(x^1)\eta(x')A^{ij}(t)u_{x^ix^j}(t,x)\nu_{\alpha}(dx)\Big|^pdt\nonumber\\
&\le&\nu_{\alpha}(D_r(a))\;r^{2p}\int^{r^2}_{0}\Big|\int_{D_r(a)}\zeta_{x^1}(x^1)\eta(x')A^{ij}(t)u_{x^i}(t,x)
\nu_{\alpha}(dx)\Big|^pdt\nonumber\\
&&\quad+\nu_{\alpha}(D_r(a))\;r^{2p}|\alpha|^p\int^{r^2}_{0}\Big|\int_{D_r(a)}\frac{1}{x}\zeta(x^1)\eta(x')A(t)u_{x^i}(t,x)
\nu_{\alpha}(dx)\Big|^pdt\nonumber\\
&=:& \quad I_1+I_2.\nonumber
\end{eqnarray}
For $I_2$ we use the fact $|A^{ij}u_{x^i}|\le |A^{ij}||u_{x^i}|\le K|u_x|$ and $1/x\leq 2/r$ on the support of $\zeta$.  The argument handling
the case of $g$ easily shows
\begin{eqnarray}
I_2\le N(K,\theta,p,d)\;r^{p}\int_{Q_r(a)}|u_x(t,x)|^p\mu_{\alpha}(dtdx).\nonumber
\end{eqnarray}
For $I_1$ we use H\"older's inequality and get
\begin{eqnarray*}
\nu_{\alpha}(D_r(a))\cdot|\int_{D_r(a)} \zeta_{x^1}\eta A^{ij}u_{x^i} \;\nu_{\alpha}(dx)|^p&\leq& \nu_{\alpha}(D_r(a))^{p}\int_{D_r(a)}|\zeta_{x^1} \eta A^{ij}u_{x^i}|^p \;\nu_{\alpha}(dx)\\
&\leq& N(\nu_{\alpha}^1(B^1_r(a))^p r^{(d-1)p}\cdot|\sup \zeta_{x^1}|^p r^{(-d+1)p}\int_{D_r(a)}|u_x|^p\nu_{\alpha}(dx).
\end{eqnarray*}
Since $\nu_{\alpha}^1(B^1_r(a))\cdot |\sup \zeta_x|\leq N/r$, it easily follows that
\begin{eqnarray}
I_1\le N(K,\theta,p,d)\;r^{p}\int_{Q_r(a)}|u_x(t,x)|^p\mu_{\alpha}(dtdx).\nonumber
\end{eqnarray}
Secondly, if $i,j\neq 1$,  by integration by parts, H\"older's inequality and the inequality $\sup|\eta_{x'}|\leq N r^{-d}$,
\begin{eqnarray*}
&&\nu_{\alpha}(D_r(a))\;r^{2p}\int^{r^2}_{0}\Big|\int_{D_r(a)}\zeta(x^1)\eta(x')\left[-A^{ij}(t)u_{x^ix^j}(t,x)\right]\nu_{\alpha}(dx)\Big|^pdt\nonumber\\
&=&
\nu_{\alpha}(D_r(a))\;r^{2p}\int^{r^2}_{0}\Big|\int_{D_r(a)}\zeta(x^1)\eta_{x^j}(x')A^{ij}(t)u_{x^i}(t,x) \nu_{\alpha}(dx)\Big|^pdt\nonumber\\
&\leq&
\nu_{\alpha}(D_r(a))^p\;r^{2p}\int^{r^2}_{0}\int_{D_r(a)}\Big|\zeta(x^1)\eta_{x^j}(x')A^{ij}(t)u_{x^i}(t,x)\Big|^p \nu_{\alpha}(dx)\;dt\nonumber\\
&\leq&
N\nu_{\alpha}(D_r(a))^p\;r^{2p}\cdot\sup |\zeta|^p\cdot r^{-dp}\int^{r^2}_{0}\int_{D_r(a)}|u_{x}|^p \nu_{\alpha}(dx)dt\nonumber\\
&\leq&
N r^{p}\int_{Q_r(a)}|u_{x}|^p \mu_{\alpha}(dxdt).
\end{eqnarray*}

For the integral with $f^i_{x^i}$ we use a similar calculation to the one
of $-A^{ij}u_{x^ix^j}$ and get for each $i$
\begin{eqnarray}
&&\nu_{\alpha}(D_r(a))\,r^{2p}\int^{r^2}_{0}\Big|\int_{D_r(a)}\zeta(x^1) \eta(x')f_{x^i}(t,x)\nu_{\alpha}(dx)\Big|^pdt\nonumber\\
&\le& N(K,\theta,p,d)\;r^{p}\int_{Q_r(a)}|f(t,x)|^p\mu_{\alpha}(dtdx).\nonumber
\end{eqnarray}
The lemma is proved.
\end{proof}


\begin{lemma}\label{key lemma 3}
Let $\alpha \geq 0, p\in [1,\infty)$,  $0<r\leq a$ and $u\in
C^{\infty}_{loc}(\Omega;\mathbb{R}^{d_1})$.

(i)  There is a constant $N=N(K,\theta,\alpha,p,d,d_1)$
such that for any $i=1,\cdots,d$  we have
\begin{eqnarray}
\int_{Q_r(a)}\left|u_{x^i}(t,x)-(u_{x^i})_{Q_r(a)}\right|^p\mu_{\alpha}(dtdx)\le N r^p
\int_{Q_r(a)}(|u_{xx}(t,x)|^p+|u_t(t,x)|^p)\mu_{\alpha}(dtdx) \label{101004.14.17}
\end{eqnarray}

(ii) Denote $\kappa_0=\kappa_0(r,a):=(\nu^1_{\alpha}(B^1_r(a))^{-1}\cdot \int^{a+r}_{a-r} x^1 \nu^1_{\alpha}(dx^1)$. Then
\begin{eqnarray}
&&\int_{Q_r(a)}\left|u(t,x)-u_{Q_r(a)}+\kappa_0(u_{x^1})_{Q_r(a)}-\sum_{i=1}^d x^i(u_{x^i})_{Q_r(a)}\right|^p\mu_{\alpha}(dtdx)\nonumber\\
&\le& N r^p\int_{Q_r(a)}
(|u_{x}(t,x)-(u_x)_{Q_r(a)}|^p+r^p|u_t(t,x)|^p+r^p|u_{xx}(t,x)|^p)\mu_{\alpha}(dtdx)\nonumber\\
&\le& N
r^{2p}\int_{Q_r(a)}(|u_{xx}(t,x)|^p+|u_t(t,x)|^p)\mu_{\alpha}(dtdx)\label{101004.14.22}
\end{eqnarray}
\end{lemma}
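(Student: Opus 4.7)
The plan is to reduce both assertions to Lemma \ref{key lemma 2} by producing auxiliary functions that satisfy a system of the form (\ref{backward sys}) with an appropriate choice of $f^i$ and $g$. Observe that Lemma \ref{key lemma 2} is essentially an ``equation-forced Poincar\'e inequality,'' and the right-hand side of (\ref{101004.14.17}) and (\ref{101004.14.22}) is what one naturally obtains after expressing the data through $u_{xx}$ and $u_t$ using the identity $g = w_t+A^{ij}w_{x^ix^j}$ for the auxiliary $w$.

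For part (i), fix $\ell\in\{1,\ldots,d\}$, set $v:=u_{x^\ell}$, and let
\[
h:=u_t+A^{ij}u_{x^ix^j}.
\]
Differentiating in $x^\ell$ (and using that $A^{ij}=A^{ij}(t)$) gives
\[
v_t+A^{ij}v_{x^ix^j}=h_{x^\ell},
\]
so $v$ satisfies (\ref{backward sys}) on $Q_r(a)$ with $f^i:=h\,\delta^{i\ell}$ and $g:=0$. Lemma \ref{key lemma 2} then yields
\[
\int_{Q_r(a)}|v-v_{Q_r(a)}|^p\mu_\alpha\le Nr^p\int_{Q_r(a)}\bigl(|v_x|^p+|h|^p\bigr)\mu_\alpha,
\]
and the pointwise bound $|h|\le|u_t|+Kd\,|u_{xx}|$ (from Assumption \ref{main assumption2}) together with $|v_x|\le|u_{xx}|$ gives (\ref{101004.14.17}).

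For part (ii), set $c_i:=(u_{x^i})_{Q_r(a)}$ and introduce
\[
w(t,x):=u(t,x)-\sum_{i=1}^d x^i c_i,
\]
so that $w_t=u_t$, $w_{x^ix^j}=u_{x^ix^j}$, and $w_{x^i}=u_{x^i}-c_i$. The symmetry of $B'_r(0)$ gives $(x^i)_{Q_r(a)}=0$ for $i\ge 2$, while the definition of $\kappa_0$ gives $(x^1)_{Q_r(a)}=\kappa_0$; consequently
\[
w_{Q_r(a)}=u_{Q_r(a)}-\kappa_0 c_1,
\]
so $w-w_{Q_r(a)}$ coincides with the expression inside $|\cdot|$ on the left-hand side of (\ref{101004.14.22}). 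Since $w_t+A^{ij}w_{x^ix^j}=u_t+A^{ij}u_{x^ix^j}$, applying Lemma \ref{key lemma 2} to $w$ with $f^i:=0$ and $g:=u_t+A^{ij}u_{x^ix^j}$ produces
\[
\int_{Q_r(a)}|w-w_{Q_r(a)}|^p\mu_\alpha\le Nr^p\int_{Q_r(a)}\bigl(|w_x|^p+r^p|g|^p\bigr)\mu_\alpha,
\]
which, after substituting $w_x=u_x-(u_x)_{Q_r(a)}$ and bounding $|g|\le|u_t|+Kd\,|u_{xx}|$, is the first inequality of (\ref{101004.14.22}). The second inequality follows by applying part (i) componentwise (summing over $\ell$) to control $\int_{Q_r(a)}|u_x-(u_x)_{Q_r(a)}|^p\mu_\alpha$.

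The only nonroutine step is the algebraic identification $w_{Q_r(a)}=u_{Q_r(a)}-\kappa_0 c_1$; once this is noted, the rest is bookkeeping. This identification is exactly the reason the specific linear combination $u_{Q_r(a)}-\kappa_0(u_{x^1})_{Q_r(a)}$ appears in the statement, and it is the key point where the weight $(x^1)^\alpha$ enters (through $\kappa_0$, which for $\alpha=0$ reduces to the center $a$).
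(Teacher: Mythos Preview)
Your proof is correct and follows essentially the same approach as the paper: both parts are reduced to Lemma \ref{key lemma 2} by introducing the auxiliary functions $v=u_{x^\ell}$ for (i) and an affine modification of $u$ for (ii). The only cosmetic difference is that the paper defines the auxiliary function in (ii) to already include the constant terms (so that its $Q_r(a)$-average vanishes), whereas you define $w=u-\sum_i x^i c_i$ and then identify $w-w_{Q_r(a)}$ with the expression in the statement; these are equivalent.
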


\begin{proof}
(i) For (\ref{101004.14.17}) we use the fact that for $v=u_{x^i}$, $v_t-A^{jm}v_{x^jx^m}=(u_t-A^{jm}u_{x^jx^m})_{x^i}$ and apply Lemma \ref{key lemma 2} with $f^i=u_t-A^{jm}u_{x^jx^m}$ for all $i$.

(ii) To prove (\ref{101004.14.22}), denote
$v(t,x):=u(t,x)-(u)_{Q_r(a)}+\kappa_0(u_{x^1})_{Q_r(a)}-\sum_i x^i(u_{x^i})_{Q_r(a)}$.  Then
$$
v_{Q_r(a)}=\kappa_0(u_{x^1})_{Q_r(a)}-\sum_i\frac{(u_{x^i})_{Q_r(a)}}{|Q_r(a)|}\int_{Q_r(a)} x^i \nu_{\alpha}(dx)dt=0,
$$
$$
 v-v_{Q_r(a)}=v, \quad v_{x^i}=u_{x^i}-(u_{x^i})_{Q_r(a)}, \quad v_t-A^{ij}v_{x^ix^j}=g:=u_t-A^{ij}u_{x^ix^j}.
 $$
  Now it is enough to use Lemma \ref{key lemma 2} and (\ref{101004.14.17}). The lemma is proved.
\end{proof}


{\bf{From this point on    we fix \underline{$\alpha:=\theta-d+p$} }} (note $\alpha>0$) and denote
$$
\nu:=\nu_{\alpha},\quad \nu^1:=\nu_{\alpha}^1, \quad   \quad \mu(dxdt)=\nu(dx)dt=(x^1)^{\theta-d+p}dxdt.
$$

\begin{theorem}\label{101004.16.53}
Let $\theta \in (d-1,d]$, $0<r\leq a$ and $\lambda r/a\geq 2$.

(i) Assume that $u\in
C^{\infty}_{loc}(\Omega;\mathbb{R}^{d_1})$ satisfies $u_t+A^{ij}(t)u_{x^ix^j}=0$ in $Q_{\lambda
r}(t_0,a,x'_0)\cap\Omega$. Then there is a constant $N=N(K,\delta,\theta,p,d,d_1)$
so that
\begin{eqnarray}
&&\aint_{Q_r(t_0,a,x'_0)}|u_{xx}(t,x)-(u_{xx})_{Q_r(t_0,a,x'_0)}|^p \mu(dtdx)\nonumber\\
 &\leq&
\frac{N}{(1+\lambda r/a)^p}\;\aint_{Q_{\lambda r}(t_0,a,x'_0)\cap
\Omega}|u_{xx}(t,x)|^p\mu(dtdx).\label{eqn 2.21.2011}
\end{eqnarray}

(ii) If $u\in C^{\infty}_{loc}(\bR^d_+;\bR^{d_1})$, $A^{ij}$ is independent of $t$ and $A^{ij}u_{x^ix^j}=0$ in $B_{\lambda r}(a,x'_0)\cap \bR^d_+$, then
\begin{eqnarray}
&&\aint_{B_r(a,x'_0)}|u_{xx}(x)-(u_{xx})_{B_r(a,x'_0)}|^p \nu(dx)\nonumber\\
 &\leq&
\frac{N}{(1+\lambda r/a)^p}\;\aint_{B_{\lambda r}(a,x'_0)\cap
\bR^d_+}|u_{xx}(x)|^p\nu(dx). \label{eqn 5.13.2011}
\end{eqnarray}

\end{theorem}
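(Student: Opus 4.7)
The plan is to combine a weighted Poincar\'e-type inequality (Lemma \ref{key lemma 3}(i)) with the weighted interior regularity bound from Lemma \ref{101004.14.53}, exploiting two facts: the PDE is translation-invariant in $t$ and $x'$, and because $A^{ij}$ depends only on $t$, every space derivative of $u$ again solves the homogeneous equation.

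First I would apply Lemma \ref{key lemma 3}(i) to $v := u_{x^k}$ for each $k$. Since $v$ also satisfies $v_t + A^{ij} v_{x^ix^j} = 0$, one has $|v_t| = |A^{ij} u_{x^k x^i x^j}| \leq K|u_{xxx}|$, and summing over $k$ yields
\[
\aint_{Q_r(t_0,a,x'_0)} |u_{xx} - (u_{xx})_{Q_r(t_0,a,x'_0)}|^p\,\mu \;\leq\; N r^p \aint_{Q_r(t_0,a,x'_0)} |u_{xxx}|^p\,\mu,
\]
reducing the oscillation bound for $u_{xx}$ to a third-derivative $L^p$ estimate. Next I would bound $|u_{xxx}|^p$ pointwise on $Q_r(t_0, a, x'_0)$ via Lemma \ref{101004.14.53}, which by Remark \ref{remark 55555} is available for every $\theta \in (d-1, d]$. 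Applied (after a translation in $t$ and $x'$ allowed by the PDE's invariance in those variables) with inner cube of radius $s = (a+r)/2$, so that $Q_r(t_0,a,x'_0) \subset Q_s(t_0, s, x'_0)$, and outer cube of radius $r_* = (a + \lambda r)/2 \asymp \lambda r$, which lies in $Q_{\lambda r}(t_0, a, x'_0) \cap \Omega$ since $\lambda r/a \geq 2$, this produces a pointwise bound on $|u_{xxx}|^p$ by a weighted $L^p$-integral of $u$ over $Q_{r_*}(t_0, r_*, x'_0)$. Then, using Lemma \ref{key lemma 3}(ii) (which controls $u$ minus a linear polynomial by $r_*^{2p}$ times a weighted integral of $|u_{xx}|^p$, and noting that the oscillation of $u_{xx}$ is unchanged by subtracting a linear polynomial from $u$), the $|u|^p$ on the right is replaced by $r_*^{2p}|u_{xx}|^p$.

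Assembling these ingredients and converting to weighted averages, I use the measure asymptotics $|Q_r(t_0, a, x'_0)|_\mu \asymp r^{d+2} a^{\alpha}$ valid for $r \leq a$, and $|Q_{\lambda r}(t_0, a, x'_0) \cap \Omega|_\mu \asymp (\lambda r)^{d+\alpha+2}$ valid for $\lambda r \geq a$, both obtained by direct integration of $(x^1)^\alpha$ over the respective $x^1$-ranges $(a-r, a+r)$ and $(0, a+\lambda r)$. These combine with the factor $r^p$ from the Poincar\'e step and the $r_*^{2p} \asymp (\lambda r)^{2p}$ from the interior step so as to match the prefactor $(1+\lambda r/a)^{-p}$ in (\ref{eqn 2.21.2011}). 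Part (ii), the elliptic statement (\ref{eqn 5.13.2011}), is proved along the same lines with balls $B_r(a, x'_0) \subset \bR^d_+$ replacing parabolic cylinders, $\nu$ replacing $\mu$, and Corollary \ref{cor 5.17.1} together with the elliptic analog of Lemma \ref{101004.14.53} playing the role of the parabolic input; the Poincar\'e reduction is cleaner because no time-derivative term appears.

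The principal obstacle is the precise bookkeeping of the constants from Lemma \ref{101004.14.53} (which depend polynomially on $r$, $s$, $r_*$ and on their separations) and of their interaction with the measure ratio $|Q_r|_\mu / |Q_{\lambda r} \cap \Omega|_\mu \asymp (\lambda r/a)^{-(d+\alpha+2)}(r/a)^{\alpha}$, so as to produce exactly $(1+\lambda r/a)^{-p}$ rather than a weaker or stronger power. Ensuring that the interior estimate contributes the right power of $(\lambda r)^{-1}$ per derivative taken, and that this balances against $r^p$ from Step 1 and $r_*^{2p}$ from the Poincar\'e-on-$u$ step, is the main technical challenge to verify.
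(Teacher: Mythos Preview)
Your plan uses exactly the same three ingredients as the paper's proof: a Poincar\'e-type reduction from the oscillation of $u_{xx}$ to $|u_{xxx}|$ (the paper uses the cruder sup-norm bound $\aint_{Q_r}|u_{xx}-(u_{xx})_{Q_r}|^p\,\mu \le N\sup_{Q_r}(|u_{xxx}|^p+|u_{xxt}|^p)$, but your use of Lemma~\ref{key lemma 3}(i) works just as well), the interior estimate Lemma~\ref{101004.14.53}, and the subtraction of a linear polynomial via Lemma~\ref{key lemma 3}(ii) to pass from $|u|^p$ back to $|u_{xx}|^p$. So structurally you are on the same track.

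The difference lies precisely in how the ``principal obstacle'' you flag is handled. The constant in Lemma~\ref{101004.14.53} is stated only as $N=N(s,r,\beta,p,\delta,K)$, with no explicit scale dependence, so applying it directly at radii $s=(a+r)/2$ and $r_*=(a+\lambda r)/2$ and then tracking powers is not possible from the statement alone; you would have to reopen its proof (and that of Lemma~\ref{lemma 3}) to extract the exact exponents. The paper sidesteps this entirely by \emph{parabolic scaling}: first it reduces to $a=1$ by setting $v(t,x)=u(a^2t,ax)$, and then rescales once more by $\beta=(1+\lambda r)/2$ via $w(t,x)=u(\beta^2 t,\beta x)$, so that Lemma~\ref{101004.14.53} is invoked only on the fixed pair $Q_{2/3}(2/3)\subset Q_1(1)$, where the constant is a pure number. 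The factor $(1+\lambda r/a)^{-p}$ then drops out automatically from the Jacobian of the scaling ($u_{xxx}=\beta^{-3}w_{xxx}$, together with $|Q_{\lambda r}(a)\cap\Omega|\sim\beta^{\,p+\theta+2}$), with no bookkeeping needed. Your direct approach can be made to work, but the cleanest way to ``verify the right power of $(\lambda r)^{-1}$ per derivative'' is exactly this scaling, so you may as well adopt it. Part~(ii) follows the same scaling reduction in the elliptic setting, as you note.
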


\begin{proof}
(ii) is a consequence of (i).  To prove (i), without loss of generality we may assume $t_0=0$, $x'_0=0$ and thus $Q_r(t_0,a,x'_0)=Q_r(a)$.

{\bf{Step 1}}. First, we consider the case $a=1$.  Note that
$$
r\leq 1, \quad 2\leq \lambda r, \quad \beta:=\frac{1+\lambda r}{2} \leq \lambda r, \quad \frac{r}{\beta}\leq \frac{1}{\beta}\leq \frac{2}{3}, \quad \quad 2\beta =1+\lambda r.
$$
Thus,
$$
 Q_{\beta}(\beta) \subset Q_{\lambda r}(1)\cap \Omega, \quad Q_{r/\beta}(\beta^{-1}) \subset Q_{2/3}(2/3).
 $$
 Denote $w(t,x)=u(\beta^2t,\beta x)$, then
 obviously
 $$
w_t+A^{ij}(\beta^2t)w_{x^ix^j}=0, \quad \quad \text{for}\quad (t,x)\in Q_1(1)
$$
and
\begin{eqnarray*}
\aint_{Q_r(1)}|u_{xx}(t,x)-(u_{xx})_{Q_r(1)}|^p (x^1)^{\theta-d+p}dxdt &\leq& N(d)\sup_{Q_r(1)}(|u_{xxx}|^p+|u_{xxt}|^p)\\
&\leq& N(d)\beta^{-3p} \,\,\sup_{Q_{r/\beta}(\beta^{-1})}(|w_{xxx}|^p+|w_{xxt}|^p)\\
&\leq& N(d)\beta^{-3p} \,\,\sup_{Q_{2/3}(2/3)} (|w_{xxx}|^p+|w_{xxt}|^p).
\end{eqnarray*}
Applying  Lemma \ref{101004.14.53} to
$v(t,x)=w(t,x)-w_{Q_1(1)}+\kappa_0(w_{x^1})_{Q_1(1)}-\sum_{i=1}^d x^i(w_{x^i})_{Q_1(1)}$, and then using   Lemma \ref{key lemma 3}
\begin{eqnarray*}
\beta^{-3p} \,\,\sup_{Q_{2/3}(2/3)} (|w_{xxx}|^p+|w_{xxt}|^p) &\leq& N\beta^{-3p}\int_{Q_1(1)}|v|^p (x^1)^{\theta-d+p}dxdt\\
&\leq& N\beta^{-3p}\int_{Q_1(1)}|w_{xx}|^p (x^1)^{\theta-d+p}dxdt\\
&=&N\beta^{-2p-2-\theta}\int_{Q_{\beta}(\beta)}|u_{xx}|^p (x^1)^{\theta-d+p}dxdt.
\end{eqnarray*}
This leads to (\ref{eqn 2.21.2011}) since $|Q_{\lambda r}(1)\cap \Omega|\sim \beta^{p+\theta+2}$.

{\bf{Step 2}}. Let $a\neq 1$. Define $v(t,x):=u(a^2t,ax)$. Then $v_t+A^{ij}(a^2t)v_{x^ix^j}=0$ in $Q_{\lambda
r/a}(1)\cap\Omega$. As easy to check,
$$
|Q_{r/a}(1)|=a^{-\theta-p-2}|Q_r(a)|, \quad (v_{xx})_{Q_{r/a}(1)}=a^2(u_{xx})_{Q_r(a)}, \quad |Q_{\lambda r/a}(1)\cap \Omega|=a^{-\theta-p-2}|
Q_{\lambda r}(a)\cap \Omega|,
$$
and consequently
$$
\aint_{Q_{r/a}(1)}|v_{xx}(t,x)-(v_{xx})_{Q_{r/a}(1)}|^p(x^1)^{\theta-d+p}dxdt=a^{2p}\aint_{Q_{r}(a)}|u_{xx}(t,x)-(u_{xx})_{Q_{r}(a)}|^p(x^1)^{\theta-d+p}dxdt,
$$
$$
\aint_{Q_{\lambda r/a}(1)\cap
\Omega}|v_{xx}(t,x)|^p(x^1)^{\theta-d+p}dxdt=a^{2p}\aint_{Q_{\lambda r}(a)\cap
\Omega}|u_{xx}(t,x)|^p(x^1)^{\theta-d+p}dxdt.
$$
It follows
\begin{eqnarray*}
&&\aint_{Q_{r}(a)}|u_{xx}(t,x)-(u_{xx})_{Q_{r}(a)}|^p(x^1)^{\theta-d+p}dxdt\\&=& a^{-2p}\aint_{Q_{\lambda r/a}(1)\cap
\Omega}|v_{xx}(t,x)|^p(x^1)^{\theta-d+p}dxdt\\
&\leq& a^{-2p}\cdot \frac{N}{(1+\lambda r/a)^p}\;\aint_{Q_{\lambda r/a}(1)\cap
\Omega}|v_{xx}(t,x)|^p(x^1)^{\theta-d+p}dxdt\\
&=&\frac{N}{(1+\lambda r/a)^p}\;\aint_{Q_{\lambda r}(a)\cap
\Omega}|u_{xx}(t,x)|^p(x^1)^{\theta-d+p}dxdt.
\end{eqnarray*}
The theorem is proved.
\end{proof}

\begin{remark}
Note that Theorem \ref{101004.16.53} is  based on Lemma \ref{101004.14.53}. It follows from Remark \ref{2011.1.15} and  Remark \ref{remark 55555} that if $p\geq 2$ then
Theorem \ref{101004.16.53} holds for any $\theta\in (d-1,d+1)$  (not only for $\theta\in (d-1,d]$).
Obviously we cannot use this result yet since Remark \ref{2011.1.15} is valid only after we prove Theorem \ref{main theorem}.
\end{remark}

\begin{lemma}
                        \label{lemma 06.08.1}
                        Assume  $\theta\in (d-1,d]$ if $p\in (2,\infty)$ and $\theta\in (d-p+1,d]$ if $p\in (1,2]$. Denote $q:=\theta-d+p$ which is in $(1,p]$.

 (i) Let $u\in C^{\infty}_0(\bR\times \bR^d_+)$ and $f:=u_t+A^{ij}(t)u_{x^ix^j}$. Suppose that $A^{ij}(t)$ is  infinitely differentiable and has bounded derivatives.
   Then for any $\varepsilon>0$,
$Q_r(t_0,a,x'_0)\subset\Omega$ and $(t,x)\in Q_r(t_0,a,x'_0)$
\begin{equation}
                      \label{eqn 11.11}
\aint_{Q_r(t_0,a,x'_0)}|u_{xx}-(u_{xx})_{Q_r(t_0,a,x'_0)}|^q\mu(dyds)\leq \varepsilon \bM(|u_{xx}|^q)(t,x)+N\bM(|f|^q)(t,x),
\end{equation}
where $N=N(\varepsilon,\theta,q,d,d_1,\delta,K)$.

(ii) Furthermore, if $u\in C^{\infty}_0(\bR^d_+)$ and $A^{ij}$ is independent of $t$, then for any $\varepsilon>0$,
$B_r(a,x'_0)\subset \bR^d_+$ and $x\in B_r(a,x'_0)$
\begin{equation}
                      \label{eqn 11.1111}
\aint_{B_r(a,x'_0)}|u_{xx}-(u_{xx})_{B_r(a,x'_0)}|^q\nu(dy)\leq \varepsilon \bM(|u_{xx}|^q)(x)+N\bM(|A^{ij}u_{x^ix^j}|^q)(x),
\end{equation}
where $N=N(\varepsilon,\theta,q,d,d_1,\delta,K)$.
\end{lemma}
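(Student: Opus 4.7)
The plan is to decompose $u = v + w$ where $w$ absorbs the inhomogeneity $f$ via the half-space $L_q$-theory of Lemma \ref{lemm 12.08.7} (at the distinguished weight $\theta = d$) and $v$ satisfies the homogeneous equation, enjoying the sharp oscillation bound of Theorem \ref{101004.16.53}. The main geometric subtlety will be the comparison between the radius $r$ of the cube $Q = Q_r(t_0, a, x_0')$ and its distance $a$ to the boundary $\{x^1 = 0\}$, which forces a case analysis.

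Given $\varepsilon > 0$, I would introduce two large parameters $\lambda = \lambda(\varepsilon)$ and $\Lambda = \Lambda(\varepsilon) \gg \lambda$, to be fixed later. In the \emph{interior regime} $\lambda r \leq a/2$, the weight $(x^1)^q$ varies by a bounded factor on $Q_{\lambda r}(t_0,a,x_0') \subset \Omega$, so weighted and Lebesgue averages on sub-cubes are comparable up to a universal constant. I would apply Theorem \ref{thm 5.5.1} and convert back to weighted averages to obtain $\aint_{Q_r}|u_{xx}-(u_{xx})_{Q_r}|^q\mu \leq N\lambda^{-q}\aint_{Q_{\lambda r}}|u_{xx}|^q\mu + N\lambda^{d+2}\aint_{Q_{\lambda r}}|f|^q\mu$; since $Q_{\lambda r} \in \mathbb{Q}$ contains $(t,x)$, both right-hand averages are dominated by $\bM$ at $(t,x)$, and $\lambda$ will be fixed so that $N\lambda^{-q} < \varepsilon$.

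In the \emph{boundary regime} $\lambda r > a/2$, I would take $\chi \in C^\infty_0(\bR^{d+1})$ equal to $1$ on $Q_{\Lambda r}(t_0,a,x_0')$ and vanishing outside $Q_{2\Lambda r}(t_0,a,x_0')$, pick $T$ large enough that $u(T,\cdot) \equiv 0$, and let $w$ solve $w_t + A^{ij}(t) w_{x^ix^j} = \chi f$, $w(T,\cdot)=0$, on $(S,T) \times \bR^d_+$ via Lemma \ref{lemm 12.08.7} (with $p$ there replaced by $q > 1$, $n = 0$, $\theta = d$, and $S$ below the time-support of $\chi f$). Classical interior regularity, using the smoothness of $A^{ij}$ and $\chi f$, gives $w \in C^\infty_{\mathrm{loc}}(\Omega)$, hence $v := u - w \in C^\infty_{\mathrm{loc}}(\Omega)$, and $v_t + A^{ij} v_{x^ix^j} = (1-\chi) f \equiv 0$ on $Q_{\Lambda r}$. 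From Lemma \ref{lemm 12.08.7} and (\ref{eqn 4.24.6}), $w_{xx}$ is controlled globally by $\int |Mw_{xx}|^q\,dx\,dt \leq N \int_{Q_{2\Lambda r} \cap \Omega} |Mf|^q\,dx\,dt$; applying Theorem \ref{101004.16.53}(i) to $v$ on $Q_{\Lambda r}$ yields $\aint_{Q_r}|v_{xx}-(v_{xx})_{Q_r}|^q\mu \leq \frac{N}{(1+\Lambda r/a)^q}\aint_{Q_{\Lambda r}\cap\Omega}|v_{xx}|^q\mu$, where $\Lambda r/a \geq \Lambda/(2\lambda)$ will make the prefactor $< \varepsilon$ once $\Lambda$ is large enough. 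Writing $v_{xx} = u_{xx} - w_{xx}$, the triangle inequality combined with the direct bound $\aint_{Q_r}|w_{xx}-(w_{xx})_{Q_r}|^q\mu \leq 2^q \aint_{Q_r}|w_{xx}|^q\mu$ and standard measure-comparison between $Q_{\Lambda r} \cap \Omega$ and a covering cube $\tilde Q \in \mathbb{Q}$ containing $(t,x)$ will produce (\ref{eqn 11.11}).

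The main technical obstacle will be coordinating $\lambda$ and $\Lambda$ so that $\Lambda r/a$ is genuinely large in the boundary regime (whence the small prefactor in Theorem \ref{101004.16.53} materializes), together with controlling the measure ratios $|\tilde Q|_\mu / |Q_r|_\mu$ and $|\tilde Q|_\mu / |Q_{\Lambda r} \cap \Omega|_\mu$ for the covering cubes $\tilde Q \in \mathbb{Q}$ in terms of $\lambda,\Lambda,d,q$ only; the hypothesis $\theta \leq d$ (equivalently $q \leq p$) and the explicit formula $\nu^1_{\alpha}(B^1_r(a)) \sim r a^q$ for $a \gtrsim r$ will be what makes these ratios manageable. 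Part (ii) follows the same blueprint: Theorem \ref{101004.16.53}(ii) replaces (i), and Lemma \ref{lemm 12.08.7} is replaced by its stationary elliptic analog at $\theta = d$, derivable by repeating the proof of Lemma \ref{lemm 12.08.7} with Lemma \ref{please} providing the decomposition $f = MD_i f^i$ and the whole-space elliptic estimate (\ref{eqn single-3111}) playing the role of Corollary \ref{so crucial}.
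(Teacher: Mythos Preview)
Your proposal is correct and follows essentially the same route as the paper: the interior regime is handled via Theorem~\ref{thm 5.5.1} after observing that the weight is essentially constant on $Q_{\lambda r}$, and the boundary regime is handled by the $v+w$ decomposition, with the homogeneous piece estimated by Theorem~\ref{101004.16.53} and the inhomogeneous piece by Lemma~\ref{lemm 12.08.7} at $\theta=d$, $p\to q$. The only cosmetic differences are that the paper introduces a single scale parameter per step (fixing $\kappa$ first, then a fresh $\lambda$ in the boundary case) rather than your pair $(\lambda,\Lambda)$, and that the paper solves for the piece carrying the \emph{far} part $h=(1-\zeta)f$ of the data (so that the solved-for function is itself homogeneous near $Q_r$) while you solve for the piece carrying the \emph{near} part $\chi f$; these yield the same decomposition. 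One point worth making explicit in your write-up is that Theorem~\ref{101004.16.53} is invoked with the parameters $(\bar p,\bar\theta)=(q,d)$, which is why the resulting measure $(y^1)^{\bar\theta-d+\bar p}dyds$ coincides with $\mu$ and the oscillation exponent is $q$; the paper spells this out.
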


\begin{proof}
(i) Without loss of generality we may take $t_0=0$ and $x'_0=0$; $Q_r(t_0,a,x'_0)=Q_r(a)$. In fact, for other cases it is enough to consider the function
$v(t,x):=u(t_0+t,x^1,x'_0+x')$ in place of $u(t,x^1,x')$.

{\bf{Step 1}}. We prove that there exists $\kappa=\kappa(\varepsilon)\in (0,1)$ so that (\ref{eqn 11.11}) holds if  $(r/a)\leq \kappa$.

Let $m$ denote the Lebesque measure on $\bR^{d+1}$.  Assume  $\lambda\geq 4$ and $\lambda r\leq a/4$. Then  $(3a/4) \leq x^1\leq (5a/4)$ if $x^1\in B^1_{\lambda r}(a)$, and therefore
$$
(3/5)^{p+\theta-d}\frac{dtdx}{m(Q_{r}(a))}\leq \frac{\mu(dtdx)}{|Q_r(a)|}\leq (5/3)^{p+\theta-d}\frac{dtdx}{m(Q_{r}(a))} \quad \quad \text{on}\quad Q_r(a),
 $$
 $$
(3/5)^{p+\theta-d}\frac{dtdx}{m(Q_{\lambda r}(a))}\leq \frac{\mu(dtdx)}{|Q_{\lambda r}(a)|}\leq (5/3)^{p+\theta-d}\frac{dtdx}{m(Q_{\lambda r}(a))} \quad \quad \text{on}\quad Q_{\lambda r}(a).
 $$
Denote  $c_0:=(5/3)^{p+\theta-d}$. By Theorem \ref{thm 5.5.1},
 \begin{eqnarray*}
 &&\aint_{Q_r(a)}|u_{xx}-(u_{xx})_{Q_r(a)}|^q\mu(dsdy)\\
  &\leq& \int_{Q_r(a)} \int_{Q_r(a)}|u_{xx}(s,y)-u_{xx}(\tau,\xi)|^q \frac{\mu(dsdy)}{|Q_r(a)|}\frac{\mu(d\tau d\xi)}{|Q_r(a)|}\\
 &\leq& c^2_0\int_{Q_r(a)}\int_{Q_r(a)}|u_{xx}(s,y)-u_{xx}(\tau,\xi)|^q\frac{dsdy}{m(Q_r(a))}\frac{d\tau d\xi}{m(Q_r(a))}\\
 &\leq& Nc^2_0\lambda^{d+2}\int_{Q_{\lambda r}(a)}|f|^q \frac{dyds}{m(Q_{\lambda r}(a))} +Nc^2_0\lambda^{-q}\int_{Q_{\lambda r}(a)}|u_{xx}|^q \frac{dyds}{m(Q_{\lambda r}(a))}\\
 &\leq&Nc^3_0\lambda^{d+2}\aint_{Q_{\lambda r}(a)}|f|^q \mu(dyds) +Nc^3_0\lambda^{-q}\aint_{Q_{\lambda r}(a)}|u_{xx}|^q \mu(dyds)\\
 &\leq&N \lambda^{d+2}\bM(|f|^q)(t,x)+N\lambda^{-q}\bM(|u_{xx}|^q)(t,x),
 \end{eqnarray*}
where $N$ depends only on $d,d_1, p,\theta, \delta,K$. Note that the above inequality holds as long as $r\lambda/a \leq 1/4$. Now we fix $\lambda$ so that $N\lambda^{-q} = \varepsilon/2$, i.e. $\lambda=(2N/\varepsilon)^{1/q}$ and define $\kappa=1/{(4\lambda)}=1/4 \cdot (2N/\varepsilon)^{-1/q}$. Then whenever
$r/a\leq \kappa$ we have $(r/a)\lambda\leq 1/4$ and thus (\ref{eqn 11.11})  follows.

{\bf{Step 2}}.
 For given $\varepsilon$, take  $\kappa=\kappa(\varepsilon)$ from Step 1. Assume $r/a\geq  \kappa$. Choose $\lambda$, which will be specified later, so that $r\lambda>4a$;  this  $\lambda$ is different from the one in step 1.  Take a $\zeta\in C^{\infty}_0(\bR^{d+1})$
so that $\zeta(t,x)=1$ for $(t,x)\in Q_{\lambda r/2}(a)\cap \Omega$ and  $\zeta(t,x)=0$ if $(t,x)\not\in (-\lambda^2r^2,\lambda^2r^2)\times (-a,a+\lambda r)\times B'_{\lambda r}$.  Denote
$$
g=f\zeta, \quad h=f(1-\zeta).
$$
Take a large $T$ so that  $u(t,x)=0$ if $t\geq T$.
By Lemma \ref{lemm 12.08.7} we can define  $v$ as the solution of
\begin{equation}
                       \label{eqn again}
v_t+A^{ij}v_{x^ix^j}=h, \quad t\in (S,T), \quad v(T,\cdot)=0
\end{equation}
so that $v\in \frH^n_{p,d}(S,T)$ for any  $n$ and $S>-\infty$.  Also let $\bar{v}\in  \frH^n_{p,d}(S,T+1)$ be the solution of
$$
\bar{v}_t+A^{ij}\bar{v}_{x^ix^j}=h, \quad t\in (S,T+1), \quad \bar{v}(T+1,\cdot)=0.
$$
Then by considering the equation for $\bar{v}$  on $(T,T+1)$, since $h(t)=0$ for $t\geq T$, we conclude $\bar{v}(t)=0$ for $t\in [T,T+1]$. Thus $\bar{v}$ also satisfies (\ref{eqn again}) and $v=\bar{v}$.
It follows from (\ref{eqn 3.31.4}) that $v$ is infinitely differentiable in $x$ (and hence in $t$) in $\Omega$.
 By applying Theorem \ref{101004.16.53} with $\bar{p}=q,\bar{\theta}=d$ and $\lambda/2$ in places of $p,\theta$ and $\lambda$ respectively,
\begin{eqnarray}
               \aint_{Q_r(a)}|v_{xx}(t,x)-(v_{xx})_{Q_r(a)}|^q \bar{\mu}(dyds) &\le& N
\frac{1}{(1+\lambda r/2a)^q}\aint_{Q_{\lambda r/2}(a)\cap \Omega}|v_{xx}(t,x)|^q\bar{\mu}(dyds) \nonumber\\
&\le& N
\frac{1}{(1+\lambda r/a)^q}\aint_{Q_{\lambda r}(a)\cap \Omega}|v_{xx}(t,x)|^q\bar{\mu}(dyds),  \label{eqn 6.08.2}
\end{eqnarray}
where $\bar{\mu}(dsdy):=(y^1)^{\bar{\theta}-d+\bar{p}}dyds=(y^1)^{q}dyds=\mu(dyds)$.
On the other hand, $w:=u-v$ satisfies $w(T,\cdot)=0$ and
$$
w_t+A^{ij}w_{x^ix^j}=g, \quad t\in (0,T).
$$
    By Lemma \ref{lemm 12.08.7},
$$
\int_{Q_r(a)}|w_{yy}|^q (y^1)^{q}dyds\leq \int_{Q_{\lambda r}(a)\cap \Omega} |w_{yy}|^q (y^1)^{q}dyds \leq N\int_{Q_{\lambda r}(a)\cap \Omega}|f|^q(y^1)^{q}\;dyds,
$$
\begin{eqnarray}
\aint_{Q_r(a)}|w_{yy}|^q\mu(dyds)&\leq& N\frac{\lambda^{d+1}(1+\lambda r/a)^{p+\theta-d+1}}{(1+r/a)^{p+\theta-d+1}-(1-r/a)^{p+\theta-d+1}}\aint_{Q_{\lambda r}(a)\cap \Omega}|f|^q\mu(dyds) \nonumber\\
&\leq &N(\kappa)\lambda^{d+1}(1+\lambda r/a)^{p+\theta-d+1}\aint_{Q_{\lambda r}(a)\cap \Omega}|f|^q\mu(dyds) \label{eqn 6.08.1},
\end{eqnarray}
where for the second inequality we use $(1+r/a)^{p+\theta-d+1}-(1-r/a)^{p+\theta-d+1}\geq (1+\kappa)^{p+\theta-d+1}-1$.
Observing that $u=v+w$,
\begin{eqnarray*}
I:&=&\aint_{Q_r(a)}|u_{yy}(t,x)-(u_{yy})_{Q_r(a)}|^q \mu(dyds)\\
&\leq& N(q)\aint_{Q_r(a)}|w_{yy}(t,x)-(w_{yy})_{Q_r(a)}|^q \mu(dyds)+N(q)\aint_{Q_r(a)}|v_{yy}(t,x)-(v_{yy})_{Q_r(a)}|^q \mu(dyds)\\
&\leq& N(q)\aint_{Q_r(a)}|w_{yy}(t,x)|^q \mu(dyds)+N(q)\aint_{Q_r(a)}|v_{yy}(t,x)-(v_{yy})_{Q_r(a)}|^q \mu(dyds)
\end{eqnarray*}
and thus  by (\ref{eqn 6.08.2}) and (\ref{eqn 6.08.1}),
\begin{eqnarray*}
I  &\leq&N\lambda^{d+1}(1+\lambda r/a)^{p+\theta-d+1}\aint_{Q_{\lambda r}(a)\cap \Omega}|f|^q\mu(dyds)\\&&+ N
\frac{1}{(1+\lambda r/a)^q}\aint_{Q_{\lambda r}(a)\cap \Omega}|v_{yy}(t,x)|^q\mu(dyds)\\
&\leq& N\lambda^{d+1}(1+\lambda r/a)^{p+\theta-d+1}\aint_{(0,\lambda^2r^2)\times(0,a+\lambda r)}|f|^q\mu(dyds)\\
&&+ N\frac{1}{(1+\lambda r/a)^q}\aint_{Q_{\lambda r}(a)\cap \Omega}\left(|u_{yy}(t,x)|^q+|w_{yy}(t,x)|^q\right)\mu(dyds)\\
&\leq& N\lambda^{d+1}(1+\lambda r/a)^{p+\theta-d+1}\aint_{Q_{\lambda r}(a)\cap \Omega}|f|^q\mu(dyds)\\
&&+ N\frac{1}{(1+\lambda r/a)^q}\aint_{Q_{\lambda r}(a)\cap \Omega}|u_{yy}(t,x)|^q\mu(dyds).
\end{eqnarray*}
Now  to prove the first assertion it is enough to   choose $\lambda$ so large that $N\frac{1}{(1+\lambda r/a)^q}\leq \varepsilon$. Also note that since $r/a\geq \kappa$, we have
$$
N \lambda^{d+1}(1+\lambda r/a)^{p+\theta-d+1}\leq N(\lambda,\kappa).
$$

(ii) The second assertion is proved similarly based on Corollary \ref{cor 5.17.1} and (\ref{eqn 5.13.2011}) in place Theorem \ref{thm 5.5.1} and (\ref{eqn 2.21.2011}).  The lemma is proved.

\end{proof}


\mysection{Proof of Theorem \ref{main theorem} and Theorem \ref{main theorem-elliptic}}
                                                   \label{section proof}

Firstly, we give an $L_p$-theory   for  the following backward system defined on $\bR\times \bR^d_+$.

\begin{theorem}
                 \label{thm all time}
  Let $p\in (1,\infty)$. Assume $\theta\in (d-1,d+1)$ if $p\in (2,\infty)$, and $\theta\in (d+1-p, d+p-1)$ if $p\in (1,2]$. Then for any $f\in \bL_{p,\theta}(-\infty,\infty)$ the system
  $$
  u_t +A^{ij}(t)u_{x^ix^j}=f
  $$
  has a unique solution $u$ in $M\bH^2_{p,\theta}(-\infty,\infty)$ and for this solution we have
  \begin{equation}
                        \label{main apriori}
  \|Mu_t\|_{\bL_{p,\theta}(-\infty,\infty)}+\|M^{-1}u\|_{\bH^2_{p,\theta}(-\infty,\infty)}\leq N\|Mf\|_{\bL_{p,\theta}(-\infty,\infty)}.
  \end{equation}
 \end{theorem}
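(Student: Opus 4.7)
The plan is to reduce the theorem to the single estimate
$$
\|u_{xx}\|_{L_p(\Omega,\mu)}\le N\|f\|_{L_p(\Omega,\mu)},\qquad \mu(dtdx):=(x^1)^{\theta-d+p}\,dxdt,
$$
so that $\|Mu_{xx}\|^p_{\bL_{p,\theta}(-\infty,\infty)}=\int|u_{xx}|^p\,d\mu$. Once this is in hand for $u\in C^{\infty}_0(\bR\times\bR^d_+;\bR^{d_1})$ (a dense subclass, and one on which Lemma \ref{lemma 06.08.1} applies after mollifying $A^{ij}(t)$ in $t$), the other pieces of (\ref{main apriori}) come for free: the equation gives $\|Mu_t\|_{\bL_{p,\theta}}\le\|Mf\|_{\bL_{p,\theta}}+N\|Mu_{xx}\|_{\bL_{p,\theta}}$, and Lemma \ref{lemma 1}(iv) applied to $M^{-1}u$ together with (\ref{eqn 4.25.3}) recovers $\|M^{-1}u\|_{\bH^2_{p,\theta}}$ from $\|Mu_{xx}\|_{\bL_{p,\theta}}$ (the hypotheses place $\theta$ in $(d-1,d-1+p)$, safely away from the excluded values).

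The core estimate on $u_{xx}$ is carried out first in the subrange where $\theta\le d$, so that $q:=\theta-d+p\in(1,p]$ and Lemma \ref{lemma 06.08.1}(i) directly applies. H\"older's inequality turns the $q$-mean bound (\ref{eqn 11.11}) into
$$
((u_{xx})^{\sharp}(t,x))^{q}\le\varepsilon\,\bM(|u_{xx}|^q)(t,x)+N\bM(|f|^q)(t,x).
$$
Applying Fefferman-Stein (Theorem \ref{FS}) and then Hardy-Littlewood (Theorem \ref{HL}) at the exponent $p/q>1$,
$$
\|u_{xx}\|^q_{L_p(\Omega,\mu)}\le N\|(u_{xx})^{\sharp}\|^q_{L_p(\Omega,\mu)}\le N\varepsilon\|u_{xx}\|^q_{L_p(\Omega,\mu)}+N\|f\|^q_{L_p(\Omega,\mu)},
$$
and for $\varepsilon$ sufficiently small the first right-hand term is absorbed into the left.

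The remaining range, namely $\theta\in(d,d+1)$ when $p>2$ and $\theta\in(d,d+p-1)$ when $p\in(1,2]$, would be handled by duality. A direct computation shows that the formal adjoint $-v_t+(A^{ji}(t))^*v_{x^ix^j}=g$ still satisfies Assumption \ref{main assumption2} (since $\sum_{i,j}(\xi^i)^*(A^{ji})^*\xi^j=\sum_{i,j}(\xi^j)^*A^{ji}\xi^i\ge\delta|\xi|^2$), and the conjugate weight $\theta'$ defined by $\theta/p+\theta'/p'=d$ falls in $(d-1,d)$, i.e.\ the direct subrange just handled. Pairing solutions of the primal and adjoint problems via Lemma \ref{duality} transfers the estimate back.

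For existence I would run the method of continuity on the convex family
$$
u^k_t+\bigl[(1-\lambda)\delta^{ij}\delta^{kr}+\lambda a^{ij}_{kr}(t)\bigr]u^r_{x^ix^j}=f^k,\qquad\lambda\in[0,1].
$$
At $\lambda=0$ the system decouples into $d_1$ independent heat equations, for which solvability in $M\bH^2_{p,\theta}(-\infty,\infty)$ is the classical scalar weighted theory of \cite{kr99}. Assumption \ref{main assumption2} is preserved with uniform constants along the family, so the a priori estimate just established applies uniformly in $\lambda$ and the standard continuity argument yields solvability at $\lambda=1$; uniqueness follows by applying the same a priori estimate to the difference of two solutions. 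The main obstacle I anticipate is the duality step: one must verify with care that the Lemma \ref{duality} pairing is compatible with the weighted second-derivative norm --- the $M$ and $M^{-1}$ factors must line up with the shift $\theta\mapsto\theta'$ correctly --- and that the adjoint estimate transfers back with constants depending only on $\delta,K,p,\theta,d,d_1$. Apart from this, the sharp-function/maximal-function machinery already developed in Sections \ref{FS HL} and \ref{section sharp} reduces everything to routine bookkeeping.
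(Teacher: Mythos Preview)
Your proposal is correct and follows essentially the same route as the paper: method of continuity reduces to the a priori estimate, which for $\theta\le d$ comes from Lemma~\ref{lemma 06.08.1} combined with the weighted Fefferman--Stein and Hardy--Littlewood theorems at exponent $p/q$, and for $\theta>d$ is obtained by duality against the conjugate pair $(p',\theta')$. Your treatment of the duality step is in fact slightly more careful than the paper's: you correctly identify the adjoint operator as involving $(A^{ji})^*$ and verify that Assumption~\ref{main assumption2} persists, whereas the paper writes the dual problem with the same $A^{ij}$ and glosses over the transpose in the system index.
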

\begin{proof}
If $A^{ij}u_{x^ix^j}=\Delta u=(\Delta u^1,\ldots,\Delta u^{d_1})$, then the theory of single equations is applied and
the theorem is true for any $\theta\in (d-1,d-1+p)$; see Theorem 5.6 in \cite{kr99}.
Actually the mentioned theorem is proved for parabolic equations defined on $(0,T)\times \bR^d_+$, but one can easily check that
the proofs in \cite{kr99} work for equations defined on $\bR \times \bR^d_+$.

 For
$\lambda\in [0,1]$ and $d_1\times d_1$ identity matrix $I$ we define
$$
{A}^{ij}_{\lambda}=({a}^{ij}_{kr,\lambda})
:=(1-\lambda)A^{ij}+\delta^{ij}\lambda
\delta I.
$$
Then for each $\lambda\in [0,1]$ the coefficient matrices $\{A^{ij}_{\lambda}:i,j=1,\ldots,d\}$ satisfy Assumption \ref{main assumption2} with the same $\delta,K$.
Thus due to the method of continuity, we only need to prove that a priori estimate (\ref{main apriori}) holds given that a solution $u$ already exists. Furthermore, since $C^{\infty}_0(\bR\times \bR^d_+)$ is
 dense in $M\bH^2_{p,\theta}(-\infty,\infty)$, we may assume that $u\in C^{\infty}_0(\bR\times \bR^d_+)$.  By Remark \ref{remark 3.21.6},  we only need to prove
 the following:
 \begin{equation}
                     \label{very important}
 \int^{\infty}_{-\infty}\int_{\bR^d_+}|u_{xx}(t,x)|^p \mu(dtdx)\leq N  \int^{\infty}_{-\infty}\int_{\bR^d_+}|f(t,x)|^p \mu(dtdx).
 \end{equation}
To prove this we certainly may assume that $A^{ij}$ are infinitely differentiable and have bounded derivatives (remember that the constant $N$ in (\ref{eqn 11.11}) do not depend on the regularity of $A^{ij}$).

{\bf{Case 1}}. Assume that either (i) $p\in (2,\infty)$ and $\theta\in (d-1,d]$  or (ii) $p\in (1,2]$ and $\theta\in (d-p+1,d]$.

Define $q:=\theta-d+p$. Recall  that the range of $q\in (1,p]$.  By Lemma \ref{lemma 06.08.1}, if $u\in C^{\infty}_0(\bR\times \bR^d_+)$, then for any $\varepsilon>0$
$$
(u_{xx})^{\sharp}(t,x)\leq \varepsilon \bM^{1/q}(|u_{xx}|^q)(t,x)+N(\varepsilon)\bM^{1/q}(|u_t+A^{ij}u_{x^ix^j}|^q)(t,x).
$$
By Theorem \ref{FS}
$($Fefferman-Stein$)$ and Theorem \ref{HL} $($Hardy-Littlewood$)$,
\begin{eqnarray*}
\|Mu_{xx}\|_{\bL_{p,\theta}(-\infty,\infty)}&=&\|u_{xx}\|_{L_p(\Omega, \mu)}\\&\leq& N \|(u_{xx})^{\sharp}\|_{L_p(\Omega, \mu)}\\
&\leq&N \varepsilon\|\bM^{1/q}(|u_{xx}|^q)\|_{L_p(\Omega, \mu)}+N\cdot N(\varepsilon)\|\bM^{1/q}(|u_t+A^{ij}u_{x^ix^j}|^q)\|_{L_p(\Omega, \mu)}\\
&=&N \varepsilon \|\bM(|u_{xx}|^q)\|^{1/q}_{L_{p/q}(\Omega, \mu)}+N\cdot N(\varepsilon)\|\bM(|u_t+A^{ij}u_{x^ix^j}|^q)\|^{1/q}_{L_{p/q}(\Omega, \mu)}\\
&\leq&N \varepsilon \||u_{xx}|^q\|^{1/q}_{L_{p/q}(\Omega, \mu)}+N\cdot N(\varepsilon)\||u_t+A^{ij}u_{x^ix^j}|^q\|^{1/q}_{L_{p/q}(\Omega, \mu)}\\
&=&N \varepsilon \|u_{xx}\|_{L_{p}(\Omega, \mu)}+N\cdot N(\varepsilon)\|u_t+A^{ij}u_{x^ix^j}\|_{L_{p}(\Omega, \mu)}.
\end{eqnarray*}
This obviously yields (\ref{very important}).

{\bf{Case 2}}.  Assume that either (i) $p \in (2,\infty)$ and $\theta\in [d,d+1)$  or (ii) $p\in (1,2]$ and $\theta\in [d,d+p-1)$.
 By Remark \ref{remark 3.21.6}  we only need to prove
 the following:
 \begin{equation}
                     \label{very important22}
 \int^{\infty}_{-\infty}\int_{\bR^d_+}|M^{-1}u(t,x)|^p (x^1)^{\theta-d}dxdt\leq N  \int^{\infty}_{-\infty}\int_{\bR^d_+}|Mf(t,x)|^p (x^1)^{\theta-d}dxdt.
 \end{equation}
To prove this, we use a duality (Lemma \ref{duality}).  Denote $p'=p/(p-1)$ and choose  $\bar{\theta}$ so that $\theta/p+\bar{\theta}/p'=d$.
 Then  $\bar{\theta}\in (d-1,d]$ if $p'\in (2,\infty)$ and  $\bar{\theta}\in (d-p'+1,d]$ if  $p'\in (1,2]$.

 Changing the variable $t\to -t$ shows that the result of case 1 is applicable to the operator $u_t-A^{ij}u_{x^ix^j}$ in place of $u_t+A^{ij}u_{x^ix^j}$. Therefore for any $v\in M\bH^2_{p',\bar{\theta}}(-\infty,\infty)$,
by integration by parts,
\begin{eqnarray*}
\int_{\bR^{d+1}_+} M^{-1}uM(v_t-A^{ij}v_{x^ix^j}) dxdt&=&\int_{\bR^{d+1}_+} u(v_t-A^{ij}v_{x^ix^j}) dxdt\\
&=&\int_{\bR^{d+1}_+} M(-u_t-A^{ij}u_{x^ix^j})M^{-1}v dxdt\\
&\leq& \|M(u_t+A^{ij}u_{x^ix^j})\|_{\bL_{p,\theta}(-\infty,\infty)}\|M^{-1}v\|_{\bL_{p',\bar{\theta}}(-\infty,\infty)}\\
&\leq&N\|M(u_t+A^{ij}u_{x^ix^j})\|_{\bL_{p,\theta}(-\infty,\infty)}\|M(v_t-A^{ij}v_{x^ix^j})\|_{\bL_{p',\bar{\theta}}(-\infty,\infty)}.
\end{eqnarray*}
Since, by Case 1, $\{v_t-A^{ij}v_{x^ix^j}: v\in M\bH^2_{p',\bar{\theta}}(-\infty,\infty)\}$ is dense in $M^{-1}\bL_{p',\bar{\theta}}(-\infty,\infty)$, it follows that
$$
\|M^{-1}u\|_{\bL_{p,\theta}(-\infty,\infty)}\leq N \|M(u_t+A^{ij}u_{x^ix^j})\|_{\bL_{p,\theta}(-\infty,\infty)}.
$$
The theorem is proved.
\end{proof}

{\bf{Proof of Theorem \ref{main theorem}}}\quad
As usual, we assume $u_0=0$. For details see the proof of Theorem 5.1 in \cite{Kr99}.

{\bf{Case 1}}. Let $T=\infty$.  As before we only prove the a priori estimate. Suppose $u\in \frH^{\gamma+2}_{p,\theta}(\infty)$ satisfies
\begin{equation}
                      \label{eqn infi}
u_t=A^{ij}u_{x^ix^j}+f, \quad t \in (0,T)\, ; \quad u(0,\cdot)=0.
\end{equation}
Define $v(t,x)=u(t,x)I_{t>0}$ and $\bar{f}=fI_{t>0}$, then  $v\in M^{-1}\bH^{2}_{p,\theta}(-\infty,\infty)$ and $v$ satisfies (see Definition \ref{definition 5.8.1})
$$
v_t=A^{ij}u_{x^ix^j}+\bar{f}, \quad (t,x)\in \bR^{d+1}_+.
$$
By Theorem \ref{thm all time},
$$
\|Mu_{xx}\|_{\bL_{p,\theta}(\infty)}\leq N\|Mf\|_{\bL_{p,\theta}(\infty)}.
$$
By  Remark \ref{remark 3.21.6}, this certainly proves (\ref{a priori}).

{\bf{Case 2}}. \quad Let $T<\infty$. The existence of the solution in $\frH^{\gamma+2}_{p,\theta}(T)$ is obvious. Now suppose that $u\in \frH^{\gamma+2}_{p,\theta}(T)$ is a solution of (\ref{eqn infi}).
By  the result of Case 1, the system
\begin{equation}
                       \label{eqn 3.07.1}
v_t=\Delta v +(A^{ij}u_{x^ix^j}+f-\Delta u)I_{t\leq T},  \quad t>0\,; \quad v(0,\cdot)=0
\end{equation}
has a unique solution $v\in \frH^{\gamma+2}_{p,\theta}(0,\infty)$. Then $v-u$ satisfies
$$
(v-u)_t=\Delta(v-u), \quad t\in (0,T)\,; \quad (v-u)(0,\cdot)=0.
$$
If follows from the theory of single equations (see, for instance, Theorem 5.6 in \cite{kr99}), $u=v$ for $t\in [0,T]$.  For $t\geq 0$, define
$$
A^{ij}_T=(a^{ij}_{T,kr}), \quad a^{ij}_{T,kr}=a^{ij}_{kr}I_{t\leq T}+ \delta^{ij} \delta^{kr} I_{t>T}.
$$
Then (\ref{eqn 3.07.1}) and the fact $u=v$ for $t\in [0,T]$ show that $v$ satisfies (replace $u$ by $v$ for $t\leq T$ in (\ref{eqn 3.07.1}))
\begin{equation}
                           \label{eqn 4.25.1}
v_t=A^{ij}_Tv_{x^ix^j}+fI_{t<T}, \quad t>0\,;\,\, v(0,\cdot)=0.
\end{equation}
By Case 1, $v\in \frH^{\gamma+2}_{p,\theta}(\infty)$ is the unique solution of (\ref{eqn 4.25.1}), and $u=v$ on $[0,T]$ whenever $u$ is a solution of  (\ref{eqn infi}) on $[0,T]$.
This obviously yields the uniqueness. The theorem is proved. \hspace{3cm}$\Box$

\vspace{3mm}

{\bf{Proof of Theorem  \ref{main theorem-elliptic}}}  \,\,\,
 The proof  is very similar to that of the proof of Theorem \ref{main theorem} and is based on (\ref{eqn 11.1111}). We leave the details to the readers as an exercise.






\end{document}